\newcommand{\be}{\begin{equation}}
\newcommand{\ee}{\end{equation}}
\newcommand{\bea}{\begin{eqnarray}}
\newcommand{\eea}{\end{eqnarray}}
\newcommand{\beas}{\begin{eqnarray*}}
\newcommand{\eeas}{\end{eqnarray*}}
\newcommand{\ba}{\begin{array}}
\newcommand{\ea}{\end{array}}
\definecolor{armygreen}{rgb}{0.29, 0.33, 0.13}
\newcommand{\real}{\mbox{$\mathbb{R}$}}
\newcommand{\Grad}{\ensuremath{\nabla}}
\newcommand{\bfe}{\ensuremath{\mathbf{e}}}
\def\XXint#1#2#3{{\setbox0=\hbox{$#1{#2#3}{\int}$}
     \vcenter{\hbox{$#2#3$}}\kern-.5\wd0}}
\newcommand{\mcT}{\ensuremath{\mathcal{T}}}
\newcommand{\brOmega}{\ensuremath{\breve{\Omega}}}
\newcommand*{\sigtens}{\underline{\bm{\sigma}}}
\newcommand*{\tautens}{\underline{\bm{\tau}}}
\newcommand*{\rhotens}{\underline{\bm{\rho}}}
\newcommand*{\xitens}{\underline{\bm{\xi}}}
\newcommand*{\Ltens}{\underline{\mbf{L}}^{2}}
\newcommand*{\Htens}{\underline{\mbf{H}}}
\newcommand*{\regdiv}{\nabla \cdot}
\newcommand*{\axidiv}{\nabla_{\text{axi}}\cdot}
\newcommand*{\taxidiv}{\text{div}_{\text{axi}}}
\newcommand*{\axicurl}{\nabla_{\text{ac}}}
\newcommand*{\Tr}[1]{\text{tr}({#1})}
\newcommand*{\epstens}{\underline{\boldsymbol{\epsilon}}}
\newcommand*{\bSigma}{\boldsymbol{\Sigma}}
\newcommand*{\bS}{\boldsymbol{S}}
\newcommand*{\bPi}{\boldsymbol{\Pi}}
\newcommand{\mbf}[1]{\mathbf{#1}}
\newcommand*{\ub}[1]{\underline{\mathbf{#1}}}
\newtheorem{lemma}{Lemma}
\newtheorem{corollary}{Corollary}
\newtheorem{theorem}{Theorem}
\renewcommand{\theequation}{\thesection.\arabic{equation}}
\def\qed{\hbox{\vrule width 6pt height 6pt depth 0pt}}
\title{Approximation of the Axisymmetric Elasticity Equations with Weak Symmetry} 
\author{
	Alistair Bentley \thanks{CarMax, 12800 Tuckahoe Creek Pkwy, 
	Richmond, VA 23238, USA. email: {\tt alistairbntl@gmail.com}.} 
	\and
	V.J.~Ervin\thanks{School of Mathematical and Statistical Sciences,
	  Clemson University, Clemson, South Carolina 29634-0975, USA.
	  email: {\tt vjervin@clemson.edu}. }
 }
\date{\today}
\begin{document}
\maketitle

\begin{abstract}
In this article we consider the linear elasticity problem in an axisymmetric three dimensional domain, 
with data which are axisymmetric and have zero angular component. The weak formulation of the
the three dimensional problem reduces to a two dimensional problem on the meridian domain,
involving weighted integrals. The problem is formulated in a mixed method framework with both
the stress and displacement treated as unknowns. The symmetry condition for the stress tensor
is weakly imposed. Well posedness of the continuous weak formulation and its discretization
are shown. Two approximation spaces are discussed and corresponding numerical computations
presented.

\end{abstract}

\textbf{Key words}.  axisymmetric elasticity problem,  well posedness, mixed finite element method

\textbf{AMS Mathematics subject classifications}. 35Q72, 65N30, 65N12

\setcounter{equation}{0}
\setcounter{figure}{0}
\setcounter{table}{0}
\setcounter{theorem}{0}
\setcounter{lemma}{0}
\setcounter{corollary}{0}
\setcounter{definition}{0}
%
\section{Introduction}
\label{sec:introduction}

During the past twenty years, a number of papers have emerged in the numerical analysis literature investigating 
three-dimensional axisymmetric problems.  This class of problem has attracted attention because a 
three-dimensional axisymmetric problem can be reduced to a two-dimensional problem when cylindrical 
coordinates are used (see Figure \ref{fig:axisymmetric_domain}).  It is well recognized that the computational effort 
required to solve a two-dimensional problem is significantly less that the computational effort needed to solve a three-dimensional problem.  

\begin{figure}[t]
\center
\includegraphics[scale=0.4]{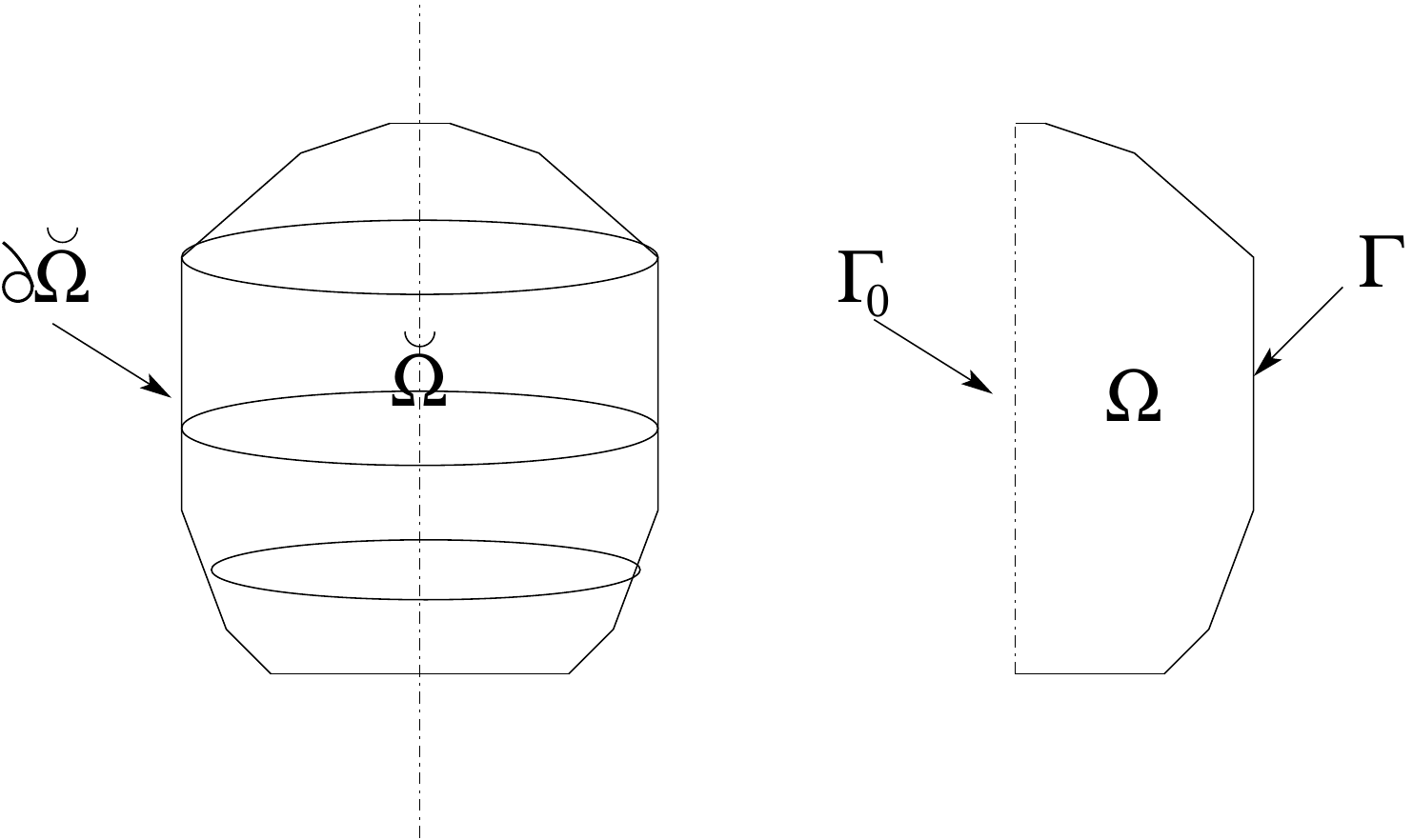}
\caption{Axisymmetric Domain}
\label{fig:axisymmetric_domain}
\end{figure}

For axisymmetric problems, Mercier and Raugel \cite{MercierAndRaugel} undertook one of the first finite element analyses 
of these problems.  In \cite{BernardiDaugeMaday}, Bernardi, Dauge and Maday 
studied the axisymmetric formulation of a number of standard problems (including Laplace, Stokes and Maxwell equations),
 and introduced tools for analyzing axisymmetric spectral methods. 
Assous, Ciarlet, et al. 
investigated the numerical approximation of the axisymmetric solution 
of the static and time dependent Maxwell equations in \cite{AssousAndCiarlet1, AssousAndCiarlet2}.  
Following these papers, a number of studies analyzing different axisymmmetric problems appeared.  
Notably, a computational framework for the axisymmetric Poisson equation was developed by Ciarlet, Jung et al. in \cite{CiarletEtAl}, 
and a computational framework for div-curl systems was presented by Copeland, Gopalakrishnan, and Pasciak
 in \cite{CopelandEtAl}.  More recently,  \cite{Oh} Oh used finite element exterior calculus techniques to study the 
 axisymmetric Hodge Laplacian problem.
 For axisymmetric fluid dynamics problems,  in \cite{BermudezEtAl}, 
 Berm\'{u}dez, Reales, et al. used 
 axisymmetry to reduce the dimension of an eddy current model, and in \cite{AnayaEtAl} Anaya, Mora et al. developed
  a computational framework for axisymmetric Brinkman flows.  
 The axisymmetric Stokes and Darcy problems have been studied in 
 \cite{BelhachmiEtAl, Ervin1, ErvinAndJenkins, LeeAndLi, Ying}.  
 A coupled axisymmetric Stokes-Darcy problem was investigated by Ervin in \cite{Ervin2}.

The finite element approximation of the linear elasticity problem has been extensively studied 
(see \cite{BBF1} for a detailed discussion). 
For many years, the only known stable finite elements for the mixed method formulation, involving the stress and displacement,
 used macro-elements in which the stress tensor was approximated on a finer mesh than the displacement vector \cite{ADG, JM, V3}.  
 In \cite{AW} Arnold and Winther developed a stable pair of piecewise polynomials with respect to a single triangulation. 
 These elements, however, carry a significant computational cost since the lowest order representation uses 
 24 degrees of freedom per triangle.

The major difficulty to creating a stable finite element scheme for the mixed formulation of the linear elasticity problem
 is in enforcing the symmetry of the stress tensor, which represents the law of conservation of angular momentum.  
 To avoid enforcing symmetry in the stress tensor strongly, a Lagrangian multiplier can be used to weakly enforces symmetry in the stress tensor \cite{ABD,AFW,FF,M1,S,S1}.

The form of differential operators expressed in cylindrical coordinates (e.g. the addition of a $\frac{1}{r}$ term) is an important 
reason why the numerical analysis for the finite element approximation to the axisymmetric 
linear elasticity problem is challenging.  A consequence of this radial scaling
 is that the gradient and divergence operators do not map polynomial spaces to polynomial spaces.  This feature 
 makes the construction of suitable inf-sup stable finite element approximation spaces more difficult than in the Cartesian 
 setting.

Following, in Sections \ref{sec:notation}-\ref{sec:axisymmetric_definitions} notation and needed preliminary results
are introduced. 
A continuous weak formulation for the axisymmetric 
linear elasticity problem is presented in Section \ref{sec:axisymmetric_form} and shown to be well posed. 
Then, in Section \ref{sec:discrete_axisymmetric_problem},
the corresponding discrete weak formulation is analyzed, and sufficient conditions for its well posedness established
in terms of the existence of a suitable bounded projection operator.
Shown in Section \ref{sec:disspace} is the existence of projection operators for two well known approximation spaces which,
together with the assumption of boundedness of the projection, establishes the approximation spaces are inf-sup stable.
An error analysis is given in Section \ref{sec:convergence_analysis}. 
The numerical computations presented in Section \ref{sec:computational_results} support the derived
theoretical results. Some concluding remarks are given in Section \ref{sec:elasticity_discussion}

 \setcounter{equation}{0}
\setcounter{figure}{0}
\setcounter{table}{0}
\setcounter{theorem}{0}
\setcounter{lemma}{0}
\setcounter{corollary}{0}
\setcounter{definition}{0}
\section{Notation}
\label{sec:notation}

In this section we introduce the notation used below.  Bold Greek letters (e.g. $\bm{\sigma}$) represent vectors, while bold Greek letters with an underline (e.g. $\sigtens$) denote tensors.  For English letters, bold lowercase letters (e.g. $\mbf{p}$) denote vectors, while bold uppercase letters (e.g. $\mbf{P}$) denote tensors.  Matrices are represented with capital, non-bold letters (e.g. $A$).
Additionally, $\mathbb{M}^{n}$ denote the space of $n\times n$ dimensional real matrices, $\mathbb{S}^{n}$ denote the space of $n \times n$ dimensional real symmetric matrices and $\mathbb{K}^{n}$ denote the space of $n \times n$ dimensional real skew-symmetric matrices.

The space of piecewise polynomials of degree less than or equal to $k$ on a partition, $\mathcal{T}_{h}$, of a domain is denoted as $P_{k}(\mathcal{T}_{h})$.  The polynomials of degree less than or equal to $k$ on a specific domain $T$, or on an element $T \in \mathcal{T}_{h}$, are notated by $P_{k}(T)$.  When referencing a vector or tensor space of polynomials, the notation $(P_{k}(T))^{n}$ and $(P_{k}(T))^{n \times n}$ is used, respectively.

The symmetric gradient operator, $\epstens$ applied to a vector $\mbf{u}$, is given by
\begin{align*}
\epstens(\mbf{u})_{ij} = \frac{1}{2} \left ( \dfrac{\partial \mbf{u}_{i}}{\partial {x}_{j}} + \dfrac{\partial \mbf{u}_{j}}{\partial x_{i}} \right).
\end{align*}
For $\sigtens_{i}$ denoting the  $i$ row of $\sigtens$, the vector $\nabla \cdot \sigtens$ is given by
\begin{align*}
\big( \nabla \cdot \sigtens \big)_{i} =  \nabla \cdot \sigtens_{i} \, .
\end{align*}

The trace operator, $\text{tr}$, is defined as
\begin{align*}
\text{tr} (\sigtens) = \sum_{i=1}^{n} \sigma_{ii}.
\end{align*}

The skew-symmetric part of a tensor $\sigtens$ is defined as
\begin{align*}
as(\sigtens) = \dfrac{1}{2} ( \sigtens - \sigtens^{t})
\end{align*}
where $\sigtens^{t}$ is the transpose of $\sigtens$.  

For $q \in \mathbb{R}$, $\mathcal{S}^{2}(q)$ is defined as 
$\mathcal{S}^{2}(q) = \begin{pmatrix} 0 & q \\ 
-q & 0\end{pmatrix}$, and in two dimensions 
$as(\sigtens)$ can be identified as $as(\sigtens)=
\mathcal{S}^{2}(q)  \text{ where } q =\dfrac{1}{2}( \sigma_{12} - \sigma_{21})$.

For vectors $\mbf{a} = (a_{1}, a_{2})^{t}$ and $\mbf{b} = (b_{1}, b_{2})^{t}$,
\begin{align*}
\nabla_{\text{curl}} \mbf{a} \ = \ 
\begin{pmatrix} - \partial_{y} a_{1}  & \partial_{x} a_{1} \\ - \partial_{y} a_{2}  & \partial_{x} a_{2} \end{pmatrix} \, , \ \ \ \
\mbf{a} \otimes \mbf{b} = \begin{pmatrix} a_{1} b_{1} & a_{1} b_{2} \\ a_{2} b_{1} & a_{2} b_{2} \end{pmatrix}.
\end{align*}
If $\mbf{w} = (w_{1},w_{2})^{t}$ and $\mbf{v} = (v_{1}, v_{2})^{t}$ are vectors, then the two-dimensional wedge product is
\begin{align*}
\mbf{w} \wedge \mbf{v} = w_{1} v_{2} - w_{2} v_{1}.
\end{align*}
For a tensor $\tautens$ and vector $\mbf{v}$, the wedge product is
\begin{align*}
(\tautens \wedge \mbf{v}) = \begin{pmatrix}    \tau_{11}v_{2} -  \tau_{21}v_{1}  \\ 
 \tau_{12}v_{2} -  \tau_{22}v_{1}  
\end{pmatrix}.
\end{align*}

For $\mbf{x} = (x_{1}, x_{2})^{t}$, $\mbf{x}^{\perp}$ is defined as, $\mbf{x}^{\perp} = (x_{2}, -x_{1})^{t}$.

To distinguish between inner product and bilinear forms defined in Cartesian coordinates from those defined in cylindrical coordinates, a $c$ subscript is attached to all Cartesian inner products and bilinear forms.

\setcounter{equation}{0}
\setcounter{figure}{0}
\setcounter{table}{0}
\setcounter{theorem}{0}
\setcounter{lemma}{0}
\setcounter{corollary}{0}
\setcounter{definition}{0}
\section{Variational Formulation}
\label{sec:vform}

As a starting point for the derivation of our weak formulation for the axisymmetric problem, we begin with the
weak (mixed) formulation for the elasticity problem, subject to a weakly enforced symmetry condition for the stress.

For $\sigtens$ denoting the stress tensor, $\mbf{u}$ is the displacement, $\brOmega \subset \mathbb{R}^{3}$
a convex (axisymmetric) domain with Lipschitz continuous boundary,
$\mu$ and $\lambda$ Lam\'{e} constants, the modeling equations of linear elasticity, subject to a fixed boundary,
are given by
\begin{align}
\mathcal{A} \sigtens = \epstens ( \mbf{u} ), & \quad \nabla \cdot \sigtens = \bm{f} \mbox{ in } \brOmega \, ,  \label{eq:elasticity_strong} \\
\mbox{subject to} \ \   \mbf{u} &= \ \mbf{0} \ \ \mbox{ on } \partial \brOmega \, .  \label{eq:elasticity_strong2}
\end{align}
In \eqref{eq:elasticity_strong} the compliance tensor $\mathcal{A}: \mathbb{S}^{n \times n} \rightarrow \mathbb{S}^{n \times n}$ is a bounded, symmetric positive definite operator that, for isotropic materials, takes the form
\begin{align}
\label{eq:complianceTensor}
\mathcal{A} \sigtens & = \frac{1}{2 \mu} \left( \sigtens - \dfrac{\lambda}{2\mu + m \lambda} \Tr{\sigtens} I \right) \, .
\end{align}

In order to describe the weak formulation we introduce the following function spaces.
\begin{align*}
\begin{split}
&L^{2}(\brOmega) = \{ v : \int_{\brOmega} v^{2} \; d \brOmega < \infty \}, \\
&\mbf{L}^{2}(\brOmega) = \{ \mbf{v} : v_{i} \in L^{2}(\brOmega) \text{ for } i = 1, \cdots, n \}, \\
&\mbf{H}^{1}(\brOmega) = \{ \mbf{v} : v_{i} \in L^{2}(\brOmega) , \ \nabla v_{i} \in \mbf{L}^{2}(\brOmega) \, ,  \text{ for } i = 1, \cdots, n \}, \\
&\Ltens(\brOmega; \mathbb{M}^{n}) = \{\sigtens \in \mathbb{M}^{n} : \sigma_{ij} \in L^{2}(\brOmega) \, , \text{ for } i,j = 1, \cdots , n \}, \\
&\Ltens(\brOmega; \mathbb{K}^{n}) = \{\sigtens \in \mathbb{K}^{n} : \sigma_{ij} \in L^{2}(\brOmega) \, , \text{ for } i,j = 1, \cdots , n \}, \\
&\Htens^{1}(\brOmega, \mathbb{M}^{n}) = \{ \sigtens \in \Ltens(\brOmega, \mathbb{M}^{n}) : \nabla \sigma_{ij} 
 \in \mbf{L}^{2}(\brOmega ) \, , \text{ for } i,j = 1, \cdots , n \} , \ \ \text{ and } \\
&\Htens(\text{div}, \brOmega, \mathbb{M}^{n}) = \{ \sigtens \in \Ltens(\brOmega, \mathbb{M}^{n}) : \nabla \cdot \sigtens \in \mbf{L}^{2}(\brOmega ) \} \, .
\end{split}
\end{align*}

Letting $X = \Htens(\text{div}, \brOmega, \mathbb{M}^{n})$, $Q = \mbf{L}^{2}(\brOmega)$, and $W = \Ltens(\brOmega; \mathbb{K}^{n})$.
Then, the weak formulation is given by \cite{ABD, FF, M1, S, S1}: \textit{ Given, $\mbf{f} \in Q$,
determine  $(\sigtens,\mbf{u}, \rhotens) \in X \times Q \times W$ such that, for all $(\tautens, \mbf{v}, \xitens) \in X \times Q \times W$}
\begin{align}
\label{eq:weak_symmetry_elasticity_1}
\int_{\brOmega} (\mathcal{A} \sigtens : \tautens + \nabla \cdot \tautens \cdot \mbf{u} + \tautens : \rhotens) \; d \brOmega &= 0 \\
\label{eq:weak_symmetry_elasticity_2}
\int_{\brOmega} \nabla \cdot \sigtens \cdot \mbf{v} \; d \brOmega &=\int_{\brOmega} \mbf{f} \cdot \mbf{v} \; d \brOmega\\
\label{eq:weak_symmetry_elasticity_3}
\int_{\brOmega} \sigtens : \xitens\; d \brOmega & = 0.
\end{align}

With the inner products,
\begin{align}
\label{eq:weak_symmetry_elasticity}
\begin{split}
a_{c}(\cdot,\cdot)&: X \times X \rightarrow \mathbb{R}, \quad a_{c}(\sigtens, \tautens) := \int_{\brOmega} \mathcal{A} \sigtens : \tautens \; d \brOmega, \\
b_{c}(\cdot,\cdot)&: Q \times X \rightarrow \mathbb{R}, \quad
b_{c} (\mbf{u}, \tautens )   := \int_{\brOmega} (\nabla \cdot \tautens) \cdot \mbf{u} \; d \brOmega, \\
c_{c}(\cdot,\cdot)&: W \times X \rightarrow \mathbb{R}, \quad
c_{c} (\rhotens, \tautens )  := \int_{\brOmega} \rhotens : \tautens \; d \brOmega
\end{split}
\end{align}
and taking $A_{c}(\sigtens, \tautens) = a_{c} (\sigtens, \tautens)$ and $B_{c}(\tautens, (\mbf{u}, \rhotens)) = b_{c}(\mbf{u},\tautens) + c_{c} (\rhotens,\tautens)$, $(\ref{eq:weak_symmetry_elasticity_1})-(\ref{eq:weak_symmetry_elasticity_3})$ can 
be rewritten in the familiar saddle-point formulation:
\textit{Given, $\mbf{f} \in Q$,
determine  $(\sigtens, (\mbf{u},\rhotens) ) \in X \times (Q \times W)$ such that, 
for all $(\tautens,\mbf{v},\xitens) \in X \times (Q \times W)$}
\begin{align}
\label{eq:simplified_weak_symmetry_elasticity}
\begin{split}
A(\sigtens,\tautens) + B(\tautens,(\mbf{u},\rhotens)) & = 0 \\
B(\sigtens,(\mbf{v},\xitens)) &= (\mbf{f}, \mbf{v}) \, .
\end{split}
\end{align}

For a detailed analysis of $(\ref{eq:weak_symmetry_elasticity_1})-(\ref{eq:weak_symmetry_elasticity_3})$, see \cite{BBF1}.

 \setcounter{equation}{0}
\setcounter{figure}{0}
\setcounter{table}{0}
\setcounter{theorem}{0}
\setcounter{lemma}{0}
\setcounter{corollary}{0}
\setcounter{definition}{0}
\section{Axisymmetric Function Spaces}
\label{sec:axisymmetric_definitions}

When the three dimensional axisymmetric linear elasticity problem is expressed in cylindrical coordinates, it can be expressed 
as a decoupled meridian and azimuthal problem.    Changing the coordinate system from Cartesian to cylindrical, however, 
alters the algebraic form of differential operators and requires a new set of function spaces and notation.  In this section, 
we introduce the key changes needed to present and discuss the meridian axisymmetric linear elasticity problem.  
Appendix \ref{sec:axisymmetric_elasticity} provides additional details on cylindrical coordinates and the procedure for decoupling the 
axisymmetric problem.

For axisymmetric vectors $\mbf{u} =  u_{r} \mbf{e}_{r} \, + \, u_{z} \mbf{e}_{z} = (u_{r}, u_{z})^{t}$, we define the gradient operators $\nabla$ and $\nabla_{\text{axi}}$ as
\begin{align}
\label{eq:gradient_operators}
\nabla \; \mbf{u} = \begin{pmatrix} \dfrac{\partial u_{r}}{\partial r} & \dfrac{\partial u_{r}}{\partial z} \\[3mm] \dfrac{\partial u_{z}}{\partial r} & \dfrac{\partial u_{z}}{\partial z} \end{pmatrix}, \text{ and } \; \nabla_{\text{axi}} \; \mbf{u} = \begin{pmatrix} \dfrac{\partial u_{r}}{\partial r} & 0 & \dfrac{\partial u_{r}}{\partial z} \\
0 & \frac{1}{r} u_{r} & 0 \\
\dfrac{\partial u_{z}}{\partial r} & 0 & \dfrac{\partial u_{z}}{\partial z} \end{pmatrix}.
\end{align}

Note that it is necessary to represent the gradient and axisymmetric gradient as tensors with different sizes because the 
non-constant nature of the cylindrical coordinate unit vectors creates additional terms in axisymmetric derivatives.  
However, in order to express the meridian problem using a two-dimensional formulation, we represent the 
tensor $\nabla_{\text{axi}} \mbf{u}$ as an ordered pair made up of a tensor and a scalar function.  That is
\begin{align}
\label{eq:axi_div_gradient_11}
\nabla_{\text{axi}} \; \mbf{u} = (\nabla \; \mbf{u}, \frac{1}{r} u_{r}).
\end{align}  

Next, for the axisymmetric vector $\mbf{u} = (u_{r}, u_{z})^{t}$, the divergence operators $\regdiv$ and $\axidiv$ are defined as
\begin{align}
\label{eq:del_rz}
\regdiv \mbf{u} = \dfrac{\partial u_{r}}{\partial r} + \dfrac{\partial u_{z}}{\partial z}, \text{ and } 
\nabla_{\text{axi}} \cdot \mbf{u} = \dfrac{1}{r} \dfrac{\partial (r \; u_{r})}{\partial r} + \dfrac{\partial u_{z}}{\partial z} 
=  \nabla \cdot \mbf{u} \, + \, \dfrac{1}{r} u_{r}  \ .
\end{align}

As alluded to in \eqref{eq:axi_div_gradient_11}, the stress tensor that appears in the meridian problem can be 
represented as $(\sigtens, \sigma)$, where $\sigtens$ denotes an $\mathbb{M}^{2}$ tensor function and $\sigma$ 
represents a scalar function.  The divergence of the meridian stress tensor is
\begin{align*}
\axidiv (\sigtens, \sigma) = \begin{pmatrix} \axidiv \sigtens_{1} - \dfrac{1}{r} \sigma \\ \axidiv \sigtens_{2} \end{pmatrix}.
\end{align*}

At times, the axisymmetric divergence operator will also be applied to an $\mathbb{M}^{2}$ tensor function $\sigtens$, in which case
\begin{align*}
\axidiv \sigtens = \axidiv (\sigtens, 0) = \begin{pmatrix} \axidiv \sigtens_{1} \\ \axidiv \sigtens_{2} \end{pmatrix}.
\end{align*}

Note that for the skew symmetric component of $(\sigtens, \sigma)$ we have
\begin{align*}
as((\sigtens, \sigma)) = as(\sigtens) = \mathcal{S}^{2}(q), \; \text{ where } \; q = \frac{1}{2} (\sigma_{12} - \sigma_{21}).
\end{align*}

The curl of an axisymmetric scalar function $p$ is denoted by $\axicurl$ and is defined as
\begin{align}
\label{eq:axicurl_def}
\axicurl \; p = \left( \dfrac{\partial p}{\partial z} , - \dfrac{1}{r} \dfrac{\partial(r \; p)}{\partial r} \right).
\end{align}
Note that $\axicurl$ returns a row-vector.  For a vector function $\mbf{p} = (p_{r}, p_{z})^{t}$ we have
\begin{align}
\axicurl \; \mbf{p} = \begin{pmatrix} \axicurl \; p_{r} \\[3mm] \axicurl \; p_{z} \end{pmatrix}. 
\end{align}

In addition to the divergence and curl, the cylindrical coordinate inner product also takes a different form from the 
Cartesian inner product.  
As illustrates in Figure \ref{fig:axisymmetric_domain}, $\Omega$ denotes the half cross section of the
axisymmetric domain $\brOmega$.
Consider the change of variables for a Cartesian function $\breve{p} \in L^{2}(\brOmega)$ into 
cylindrical coordinates
\begin{align}
\label{eq:cylindrical_inner_product}
\int_{\brOmega} \breve{p}^{2} \; d \brOmega = \iint_{\Omega} \int_{\theta = 0}^{2 \pi} \; p^{2} r \; d \theta \; dr \; dz  .
\end{align}
Notice the $r = r(\mathbf{x})$ scaling in the measure.  In the axisymmetric setting, $p \equiv p(r,z)$ and the $\theta$ integral can be computed to give a factor of $2 \pi$.  As this term is a constant factor in all such integrals arising, we omit it. To distinguish the cylindrical coordinate inner product from the Cartesian inner product, we use the following notation
\begin{align*}
(p, q) = \int_{\Omega} p \; q \; r \; dr \; d z .
\end{align*}

To account for this scaling in the inner product,  we introduce the following function spaces
\begin{align*}
\begin{split}
_{\alpha}L^{2}(\Omega) &= \{ v : \int_{\Omega} v^{2} r^{\alpha}  \; dr \; dz < \infty \}, \\
_{\alpha} \mbf{L}^{2} (\Omega) & = \{ \mbf{v} \in \mathbb{R}^{n} : v_{i} \in \; _{\alpha}L^{2}(\Omega) \text{ for } i = 1, ..., n \}, \\
_{\alpha} \Ltens (\Omega, \mathbb{M}^{n}) &= \{ \sigtens \in \mathbb{M}^{n} : \sigma_{ij} \in \; _{\alpha} L^{2} (\Omega) \text{ for } i=1, \cdots n \text{ and } j = 1, \cdots n \},  \ \ \mbox{ and } \\
_{\alpha} \Ltens (\Omega, \mathbb{K}^{n}) &= \{ \sigtens \in \mathbb{K}^{n} : \sigma_{ij} \in \; _{\alpha} L^{2} (\Omega) \text{ for } i=1, \cdots n \text{ and } j = 1, \cdots n \}.
\end{split}
\end{align*}

The norms associated with these $_{\alpha} L^{2}$ spaces are 
\begin{align*}
&\| v \|^{2}_{_{\alpha} L^{2}(\Omega) } = \int_{\Omega} v^{2} r^{\alpha}  \; dr \; dz ,  \quad
\| \mbf{v} \|^{2}_{_{\alpha} \mbf{L}^{2} (\Omega)} = \sum_{i=1}^{n} \| v_{i} \|^{2}_{_{\alpha} L^{2} (\Omega)} , \\
&\| \sigtens \|^{2}_{_{\alpha} \Ltens (\Omega, \mathbb{M}^{n})} = \sum_{i=1}^{n} \sum_{j=1}^{n} \| \sigma_{ij} \|^{2}_{_{\alpha} L^{2} (\Omega)} \; \; \text{ and } \; \; \| \sigtens \|^{2}_{_{\alpha} \Ltens (\Omega, \mathbb{K}^{n})} = \sum_{i=1}^{n} \sum_{j=1}^{n} \| \sigma_{ij} \|^{2}_{_{\alpha} L^{2} (\Omega)}.
\end{align*}

In addition to the $_{\alpha} L^{2}$ spaces, the elasticity problem requires divergence spaces for the stress tensors.  These spaces are
\begin{align*}
\begin{split}
_{\alpha} \mbf{H} (\taxidiv, \Omega) &= \{ \mbf{v} \in \; _{\alpha}\mbf{L}^{2}(\Omega) : \axidiv \mbf{v} \in \; _{\alpha}L^{2}(\Omega) \}, \\
_{\alpha} \ub{H} (\taxidiv, \Omega ;  \mathbb{M}^{n}) & = \{ \sigtens \in \; _{\alpha} \Ltens(\Omega; \mathbb{M}^{n}) : \axidiv \sigtens \in \; _{\alpha} \mbf{L}^{2} (\Omega) \},  \ \ \mbox{ and }  \\
_{\alpha} \ub{H} (\taxidiv, \Omega ;  \mathbb{K}^{n}) & = \{ \sigtens \in \; _{\alpha}\Ltens(\Omega; \mathbb{K}^{n}) : \axidiv  \sigtens \in \; _{\alpha} \mbf{L}^{2} (\Omega) \}.
\end{split}
\end{align*}
with norms
\begin{align*}
\| \mbf{v} \|^{2}_{_{\alpha} \mbf{H} (\taxidiv, \Omega )} &= \| \axidiv \mbf{v} \|^{2}_{_{\alpha}L^{2}(\Omega)} + \| \mbf{v} \|^{2}_{_{\alpha}\mbf{L}^{2}(\Omega)}, \\
\| \sigtens \|^{2}_{_{\alpha} \Htens(\taxidiv, \Omega; \; \mathbb{M}^{n})} &= \| \axidiv  \sigtens \|^{2}_{_{\alpha} \mbf{L}^{2}(\Omega)} + \| \sigtens \|^{2}_{_{\alpha} \Ltens(\Omega; \mathbb{M}^{n})}, \\ 
\| \sigtens \|^{2}_{_{\alpha} \Htens(\taxidiv, \Omega; \; \mathbb{K}^{n})} &= \| \axidiv  \sigtens \|^{2}_{_{\alpha} \mbf{L}^{2}(\Omega)} + \| \sigtens \|^{2}_{_{\alpha} \Ltens(\Omega; \mathbb{K}^{n})}.
\end{align*}

For $\zeta$ a nonnegative integer and $v$ a $\zeta$ times differentiable function, let
\begin{align*}
\nabla^{\zeta} v = \begin{bmatrix} \dfrac{\partial^{\zeta} v}{(\partial r)^{\zeta}  }, \dfrac{\partial^{\zeta} v}{(\partial r)^{\zeta-1} \partial z},  \cdots, \dfrac{\partial^{\zeta} v }{(\partial r) (\partial z)^{\zeta -1}},  \dfrac{\partial^{\zeta}v}{(\partial z)^{\zeta}} \end{bmatrix}.
\end{align*}
Then, 
\begin{align*}
_{\alpha} H^{k} (\Omega) &= \{ v \in {}_{\alpha} L^{2}(\Omega) : \nabla^{\zeta} v \in {}_{\alpha} \mbf{L}^{2} (\Omega) \text{ for all } \zeta \leq k \}, \\
_{\alpha} \mbf{H}^{k} (\Omega) &= \{ \mbf{v} \in {}_{\alpha} \mbf{L}^{2}(\Omega) : \nabla^{\zeta} v_{i} \in {}_{\alpha}\mbf{L}^{2}(\Omega) \text{ for all } \zeta \leq k \text{ and } i = 1,2, \cdots n \},  \ \ \mbox{ and } \\
_{\alpha} \Htens^{k} (\Omega; \mathbb{M}^{n}) &= \{ \sigtens \in {}_{\alpha} \mbf{L}^{2}(\Omega; \mathbb{M}^{n}) : \nabla^{\zeta} \sigma_{i,j} \in {}_{\alpha}\mbf{L}^{2}(\Omega) \text{ for all } \zeta \leq k \text{ and } i,j = 1,2, \cdots n \},
\end{align*}
with norms
\begin{align*}
\| v \|^{2}_{{}_{\alpha}H^{k}(\Omega)} &= \| v \|^{2}_{{}_{\alpha} L^{2}(\Omega)} + \sum_{\zeta=1}^{k} \| \nabla^{\zeta} v \|^{2}_{{}_{\alpha}\mbf{L}^{2}(\Omega)}, \\
\| \mbf{v} \|^{2}_{{}_{\alpha} \mbf{H}^{k}(\Omega)} &= \| \mbf{v} \|^{2}_{{}_{\alpha} \mbf{L}^{2}(\Omega)} + \sum_{i=1}^{n} \sum_{\zeta=1}^{k} \| \nabla^{\zeta} v_{i} \|^{2}_{{}_{\alpha}\mbf{L}^{2}(\Omega)}, \\
\| \sigtens \|^{2}_{{}_{\alpha} \Htens^{k}(\Omega; \mathbb{M}^{n})} &= \| \sigtens \|^{2}_{{}_{\alpha} \Ltens(\Omega; \mathbb{M}^{n})} + \sum_{i=1}^{n} \sum_{j=1}^{n} \sum_{\zeta=1}^{k} \| \nabla^{\zeta} \sigma_{ij} \|^{2}_{{}_{\alpha}\mbf{L}^{2}(\Omega)}.
\end{align*}

Next we consider some subtle details related to function spaces containing axisymmetric derivative terms.  To begin, 
using \eqref{eq:gradient_operators}, 
\begin{align*}
\| \nabla_{\text{axi}} \mbf{v} \|^{2}_{{}_{1}L^{2}(\Omega)} = \int_{\Omega} \nabla_{\text{axi}} \mbf{v} : \nabla_{\text{axi}} \mbf{v} \; r \; d \Omega = \int_{\Omega} \nabla \mbf{v} : \nabla \mbf{v}\;  r \; d \Omega + \int_{\Omega} \frac{1}{r} v_{r}^{2} \; d \Omega.
\end{align*}

Therefore, in order that $\| \nabla_{\text{axi}} \mbf{v} \|_{{}_{1}L^{2}(\Omega)} < \infty $, it is necessary 
for $v_{r} \in {}_{1}H^{1}(\Omega)$ and $v_{r} \in {}_{-1}L^{2}(\Omega)$.  To denote this important subspace 
of ${}_{1}H^{1}(\Omega)$, we define
\begin{align*}
{}_{1}V^{1}(\Omega)  = \{ v \in {}_{1}H^{1}(\Omega) : v \in {}_{-1}L^{2}(\Omega) \}  \ \ 
& \mbox{with associated norm } \ \  \| v \|_{{}_{1}V^{1}(\Omega)} \ = \ \left( \| v \|_{{}_{-1}L^{2}(\Omega)}^{2} \ + \ 
 \| v \|_{{}_{1}H^{1}(\Omega)}^{2} \right)^{1/2} . 
\end{align*} 
Also, we introduce
\begin{align*}
 {}_{1}\mbf{VH}^{1}(\Omega)  &=  \ \{ \mbf{v} \, = \, (v_{r} , v_{z})^{t} \, : \, 
 v_{r} \in {}_{1}V^{1}(\Omega)  \, , \  v_{z} \in {}_{1}H^{1}(\Omega)  \}   \\
 \mbox{with associated norm } \ \  \| \mbf{v} \|_{{}_{1}\mbf{VH}^{1}(\Omega)}
 &= \ \left( \| v_{r} \|_{{}_{1}V^{1}(\Omega)}^{2} \ + \ \| v_{z} \|_{{}_{1}H^{1}(\Omega)}^{2} \right)^{1/2} \, .
\end{align*}

It is also important to observe, that unlike in the Cartesian setting, 
${}_{1}\mbf{H}^{1}(\Omega) \not \subset {}_{1}\mbf{H}(\taxidiv, \Omega)$.

When referencing a function space whose functions have a vanishing trace along the boundary segment $\Gamma$, we use a zero subscript, i.e.,
\begin{align*}
{}_{1}H_{0}^{1}(\Omega) = \{ v \in {}_{1}H_{0}^{1}(\Omega) : v = 0 \text{ on } \Gamma \}.
\end{align*}

Note that $\Gamma$ here does not include the rotation axis portion of the boundary of $\Omega$ as illustrated in Figure \ref{fig:axisymmetric_domain}.

As eluded to above, in transforming from $\breve{\Omega}$ to $\Omega$, we have the following relationships.
\begin{lemma} \cite[Proposition 1]{BelhachmiEtAl}   \label{redvec}
The space of axisymmetric vector fields in $H^{1}(\breve{\Omega})^{3}$ with zero angular component is isomorphic to
${}_{1}\mbf{VH}^{1}(\Omega)$.
\end{lemma}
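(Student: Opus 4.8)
The plan is to make the isomorphism explicit via the change of variables between Cartesian and cylindrical coordinates, and then verify that it maps the two spaces onto each other with equivalent norms. First I would set up the map $\Phi$ that sends an axisymmetric vector field $\breve{\mbf{u}} \in H^{1}(\breve{\Omega})^{3}$ with zero angular component, written in cylindrical coordinates as $\breve{\mbf{u}} = u_{r}(r,z)\,\mbf{e}_{r} + u_{z}(r,z)\,\mbf{e}_{z}$, to the pair $\mbf{u} = (u_{r}, u_{z})^{t}$ on the meridian domain $\Omega$. This map is clearly linear and bijective onto the set of such pairs, so the whole content is the norm equivalence $\| \breve{\mbf{u}} \|_{H^{1}(\breve{\Omega})^{3}} \simeq \| \mbf{u} \|_{{}_{1}\mbf{VH}^{1}(\Omega)}$.

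Next I would compute $\| \breve{\mbf{u}} \|_{L^{2}(\breve{\Omega})^{3}}^{2}$ and $\| \nabla \breve{\mbf{u}} \|_{L^{2}(\breve{\Omega})^{3\times 3}}^{2}$ in cylindrical coordinates. The $L^{2}$ term gives $\int_{\Omega}(u_{r}^{2} + u_{z}^{2})\, r\, dr\, dz$ up to the factor of $2\pi$, i.e. $\| \mbf{u} \|_{{}_{1}\mbf{L}^{2}(\Omega)}^{2}$. For the gradient term, the key point — already flagged in the excerpt around equation \eqref{eq:gradient_operators} and the displayed computation of $\| \nabla_{\text{axi}} \mbf{v} \|_{{}_{1}L^{2}(\Omega)}^{2}$ — is that the full three-dimensional gradient of an axisymmetric field with zero angular component, when expressed in the moving frame $\{\mbf{e}_{r}, \mbf{e}_{\theta}, \mbf{e}_{z}\}$, picks up the extra entry $\frac{1}{r} u_{r}$ coming from $\partial_{\theta} \mbf{e}_{r} = \mbf{e}_{\theta}$. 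Thus $\| \nabla \breve{\mbf{u}} \|_{L^{2}(\breve{\Omega})^{3\times 3}}^{2}$ equals (a constant times) $\int_{\Omega} \nabla \mbf{u} : \nabla \mbf{u}\; r\, dr\, dz + \int_{\Omega} \frac{1}{r} u_{r}^{2}\; dr\, dz$, which is exactly $\| \nabla \mbf{u} \|_{{}_{1}\mbf{L}^{2}(\Omega)}^{2} + \| u_{r} \|_{{}_{-1}L^{2}(\Omega)}^{2}$. Assembling these shows $\| \breve{\mbf{u}} \|_{H^{1}(\breve{\Omega})^{3}}^{2}$ is, up to the factor $2\pi$, equal to $\| u_{r} \|_{{}_{1}H^{1}(\Omega)}^{2} + \| u_{r} \|_{{}_{-1}L^{2}(\Omega)}^{2} + \| u_{z} \|_{{}_{1}H^{1}(\Omega)}^{2}$, which is precisely (an equivalent form of) the ${}_{1}\mbf{VH}^{1}(\Omega)$ norm; in particular $\breve{\mbf{u}} \in H^{1}(\breve{\Omega})^{3}$ forces $u_{r} \in {}_{1}V^{1}(\Omega)$ and $u_{z} \in {}_{1}H^{1}(\Omega)$, and conversely.

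The main obstacle I anticipate is not the algebra of the norm identity but the regularity/density bookkeeping needed to make the claim rigorous: one must check that an element of $H^{1}(\breve{\Omega})^{3}$ that is axisymmetric with zero angular component really is of the form $u_{r}(r,z)\,\mbf{e}_{r} + u_{z}(r,z)\,\mbf{e}_{z}$ with $u_{r}, u_{z}$ having the asserted meridian regularity (this is a statement about the behavior of the components near the rotation axis $r=0$, where $\mbf{e}_{r}$ is not even continuous), and that the axisymmetry is preserved — i.e. that the distributional derivatives transform as the classical formulas suggest. Since Lemma \ref{redvec} is quoted verbatim from \cite[Proposition 1]{BelhachmiEtAl}, I would handle this by reducing to the cited reference: carry out the formal computation above on smooth axisymmetric fields, observe that such fields are dense in the relevant subspace of $H^{1}(\breve{\Omega})^{3}$ (a standard fact, or again cite \cite{BelhachmiEtAl, BernardiDaugeMaday}), and conclude the isomorphism by continuity of $\Phi$ and $\Phi^{-1}$ with respect to the two norms. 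This keeps the proof short and defers the delicate axis-regularity argument to the literature, which is appropriate given the statement is presented as a cited lemma.
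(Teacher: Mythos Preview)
The paper does not include its own proof of this lemma; it is stated as a citation of \cite[Proposition~1]{BelhachmiEtAl} with no argument given. Your proposal is correct and is precisely the computation one would expect: write the Cartesian $H^{1}$ norm in cylindrical coordinates, observe that the $\frac{1}{r}u_{r}$ term from $\nabla_{\text{axi}}\mbf{u}$ forces $u_{r}\in{}_{-1}L^{2}(\Omega)$, and read off the norm equivalence. In fact, your approach is exactly the template the paper follows in its proof of the next result, Lemma~\ref{redten}, for the tensor case --- expand $\nabla\tautens$ in the cylindrical frame, integrate, and identify which weighted spaces the components must lie in --- so your proposal is fully in keeping with the paper's methodology. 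Your caution about density and behavior at the axis is well placed and is the genuine analytic content of \cite[Proposition~1]{BelhachmiEtAl}; deferring it to that reference is appropriate here.
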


\begin{lemma}  \label{redten}
The space of axisymmetric tensors in $\Htens^{1}(\brOmega, \mathbb{M}^{3})$ with zero azimuthal components
is isomorphic to
\begin{equation}
\left\{ \tautens \, = \, \left( \begin{array}{ccc}
\tau_{r r}  &   0   &  \tau_{r z}   \\
0   &  \tau_{\theta  \theta}   &   0   \\
\tau_{z r}  &   0   &  \tau_{z z}   \end{array} \right) \, : \, \tau_{r r}, \, \tau_{\theta  \theta}, \, \tau_{z  z} \in  {}_{1}H^{1}(\Omega), \
 \tau_{r z}, \,  \tau_{z r} \in  {}_{1}V^{1}(\Omega), \ ( \tau_{r r} \, - \, \tau_{\theta  \theta}) \in   {}_{-1}L^{2}(\Omega) \right\}.
 \label{ref34}
\end{equation} 
\end{lemma}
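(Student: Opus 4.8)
The plan is to reduce the statement to a change to cylindrical coordinates together with an elementary computation of the $\Htens^{1}(\brOmega,\mathbb{M}^{3})$-norm, just as in the proof of Lemma~\ref{redvec} (\cite[Proposition 1]{BelhachmiEtAl}). First I would make precise what an ``axisymmetric tensor with zero azimuthal components'' is: writing $\mbf{e}_{r},\mbf{e}_{\theta},\mbf{e}_{z}$ for the (non-constant) cylindrical frame, such a $\tautens$ is determined by the five functions $\tau_{rr},\tau_{\theta\theta},\tau_{zz},\tau_{rz},\tau_{zr}$ of $(r,z)$ and, on setting $s:=\tfrac12(\tau_{rr}+\tau_{\theta\theta})$ and $d:=\tfrac12(\tau_{rr}-\tau_{\theta\theta})$, it decomposes as
\begin{align*}
\tautens &= s\,(\mbf{e}_{r}\otimes\mbf{e}_{r}+\mbf{e}_{\theta}\otimes\mbf{e}_{\theta}) + d\,(\mbf{e}_{r}\otimes\mbf{e}_{r}-\mbf{e}_{\theta}\otimes\mbf{e}_{\theta}) \\
 &\quad{} + \tau_{rz}\,\mbf{e}_{r}\otimes\mbf{e}_{z} + \tau_{zr}\,\mbf{e}_{z}\otimes\mbf{e}_{r} + \tau_{zz}\,\mbf{e}_{z}\otimes\mbf{e}_{z}.
\end{align*}
In the fixed Cartesian frame the first dyad is the $\theta$-independent matrix $\mathrm{diag}(1,1,0)$, the second is traceless with entries $\pm\cos 2\theta$ and $\sin 2\theta$, the two middle dyads have entries $\cos\theta$ and $\sin\theta$, and the last is $\mathrm{diag}(0,0,1)$. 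The asserted isomorphism is nothing but the (clearly linear and bijective) map sending $\tautens$ to its restriction to $\Omega$, i.e. to the $3\times 3$ matrix displayed in \eqref{ref34}; thus the entire content is that the two norms are equivalent.

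To prove the norm equivalence I would observe that $\tautens\in\Htens^{1}(\brOmega,\mathbb{M}^{3})$ is equivalent to three requirements which together involve all nine Cartesian entries: that (i) the third row $(\tau_{zr}\cos\theta,\ \tau_{zr}\sin\theta,\ \tau_{zz})^{t}$, (ii) the third column $(\tau_{rz}\cos\theta,\ \tau_{rz}\sin\theta,\ \tau_{zz})^{t}$, and (iii) the upper-left $2\times 2$ block, namely $s$ times the $2\times2$ identity plus $d$ times the oscillatory matrix above, all lie in the corresponding Cartesian $H^{1}$-spaces. The vectors in (i) and (ii) are exactly the Cartesian representations of the axisymmetric vector fields $\tau_{zr}\mbf{e}_{r}+\tau_{zz}\mbf{e}_{z}$ and $\tau_{rz}\mbf{e}_{r}+\tau_{zz}\mbf{e}_{z}$, each with vanishing angular component, so Lemma~\ref{redvec} applies directly and yields $\tau_{rz},\tau_{zr}\in{}_{1}V^{1}(\Omega)$ and $\tau_{zz}\in{}_{1}H^{1}(\Omega)$, with equivalent norms. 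For (iii) I would compute the $H^{1}(\brOmega)$-norm of each of the four entries directly, using the cylindrical volume element $r\,d\theta\,dr\,dz$ and the cylindrical gradient $\partial_{r}\mbf{e}_{r}+\tfrac1r\partial_{\theta}\mbf{e}_{\theta}+\partial_{z}\mbf{e}_{z}$, and carrying out the trivial $\theta$-integrals ($\int_{0}^{2\pi}\cos^{2}2\theta\,d\theta=\int_{0}^{2\pi}\sin^{2}2\theta\,d\theta=\pi$, odd and cross terms vanishing): the $\partial_{r}$, $\partial_{z}$ and zeroth-order contributions reproduce $\|s\|_{{}_{1}H^{1}(\Omega)}^{2}$ and $\|d\|_{{}_{1}H^{1}(\Omega)}^{2}$, while the $\tfrac1r\partial_{\theta}$ contribution acts only on the $d\cos 2\theta$, $d\sin 2\theta$ entries and produces $\|d\|_{{}_{-1}L^{2}(\Omega)}^{2}$. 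Hence (iii) is equivalent to $s\in{}_{1}H^{1}(\Omega)$ and $d\in{}_{1}V^{1}(\Omega)$, i.e. to $\tau_{rr},\tau_{\theta\theta}\in{}_{1}H^{1}(\Omega)$ together with $(\tau_{rr}-\tau_{\theta\theta})\in{}_{-1}L^{2}(\Omega)$. Combining (i)--(iii) gives the characterization in \eqref{ref34} and the equivalence of norms, hence the isomorphism.

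The only delicate point --- and the main obstacle --- is the cancellation of the $1/r$-singular contributions of $\tau_{rr}\mbf{e}_{r}\otimes\mbf{e}_{r}$ and $\tau_{\theta\theta}\mbf{e}_{\theta}\otimes\mbf{e}_{\theta}$. Each of these dyads, by itself, carries a Cartesian entry oscillating like $\cos 2\theta$ whose $\theta$-derivative forces an $r^{-1}$ weight and would thus require ${}_{-1}L^{2}$-control of that component; but since the pair combines into $s$ times the identity (constant in $\theta$) plus $d$ times an oscillatory matrix, only the \emph{difference} $d=\tfrac12(\tau_{rr}-\tau_{\theta\theta})$ is seen by the $\tfrac1r\partial_{\theta}$ term. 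This is precisely why the statement asks merely for $\tau_{rr},\tau_{\theta\theta}\in{}_{1}H^{1}(\Omega)$ and $(\tau_{rr}-\tau_{\theta\theta})\in{}_{-1}L^{2}(\Omega)$, rather than for $\tau_{rr}$ and $\tau_{\theta\theta}$ to be separately in ${}_{1}V^{1}(\Omega)$; keeping this cancellation straight (and noting that no condition couples $\tau_{zz}$ to the in-plane entries) is the substance of the argument, the rest being routine trigonometric integration.
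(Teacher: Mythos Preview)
Your proposal is correct and reaches the same conclusion as the paper, but the organization is genuinely different. The paper proceeds by writing out the full rank-three gradient $\nabla\tautens$ in the cylindrical frame (all twenty-seven components $\bfe_{i}\otimes\bfe_{j}\otimes\bfe_{k}$), grouping them into three $3\times3$ matrices $D\tautens_{r}$, $D\tautens_{\theta}$, $D\tautens_{z}$, and then reading off the required memberships from the integral $\int_{\Omega}(D\tautens_{r}:D\tautens_{r}+\cdots)\,r\,dr\,dz$; the term $\tfrac{1}{r}(\tau_{rr}-\tau_{\theta\theta})$ simply appears twice in that list, whence the ${}_{-1}L^{2}$ condition on the difference. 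You instead work entry-by-entry in the Cartesian frame, recycle Lemma~\ref{redvec} for the third row and third column (a nice economy), and use the $s,d$ decomposition to isolate the $\theta$-dependent part of the upper $2\times2$ block; the cancellation that yields only $(\tau_{rr}-\tau_{\theta\theta})\in{}_{-1}L^{2}(\Omega)$ is then transparent, since $\tfrac{1}{r}\partial_{\theta}$ annihilates the $s$-part and sees only $d$. Your route is more modular and makes the structural reason for the difference condition explicit; the paper's is a single self-contained computation that does not rely on the vector lemma. Both are equally valid.
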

\begin{proof}
The representation of an axisymmetric tensor $\tautens$, in cylindrical 
coordinates
with zero azimuthal components is given by the tensor in \eqref{ref34}. 
In terms of the unit coordinate vectors
$\bfe_{r}$, $\bfe_{\theta}$, $\bfe_{z}$, $\Grad \tautens$ may be written as
\begin{align*}
\Grad \tautens &= \ 
\partial_{r} \tau_{r r}  \, \bfe_{r} \otimes \bfe_{r} \otimes \bfe_{r} \ + \      0    \, \bfe_{r} \otimes \bfe_{r} \otimes \bfe_{\theta}   
\ + \    \partial_{z} \tau_{r r}  \,   \bfe_{r} \otimes \bfe_{r} \otimes \bfe_{z}  \\
& \ \  + \  0  \, \bfe_{r} \otimes \bfe_{\theta} \otimes \bfe_{r} 
\ + \   \frac{1}{r} ( \tau_{r r} \, - \, \tau_{\theta \theta} )   \, \bfe_{r} \otimes \bfe_{\theta} \otimes \bfe_{\theta}  
\ + \    0  \,   \bfe_{r} \otimes \bfe_{\theta} \otimes \bfe_{z}    \\
& \ \ + \  \partial_{r} \tau_{r z}   \, \bfe_{r} \otimes \bfe_{z} \otimes \bfe_{r} 
 \ + \     0       \, \bfe_{r} \otimes \bfe_{z} \otimes \bfe_{\theta}  
\ + \     \partial_{z} \tau_{r z} \, \bfe_{r} \otimes \bfe_{z} \otimes \bfe_{z}  \\
& \ \ + \  0  \, \bfe_{\theta} \otimes \bfe_{r} \otimes \bfe_{r} 
\ + \   \frac{1}{r} ( \tau_{r r} \, - \, \tau_{\theta \theta} )  \, \bfe_{\theta} \otimes \bfe_{r} \otimes \bfe_{\theta}   
\ + \    0 \,  \bfe_{\theta} \otimes \bfe_{r} \otimes \bfe_{z}  \\
& \ \ + \ \partial_{r} \tau_{\theta \theta}    \, \bfe_{\theta} \otimes \bfe_{\theta} \otimes \bfe_{r} 
\ + \    0  \, \bfe_{\theta} \otimes \bfe_{\theta} \otimes \bfe_{\theta}  
\ + \   \partial_{z} \tau_{\theta \theta} \,   \bfe_{\theta} \otimes \bfe_{\theta} \otimes \bfe_{z}    \\
& \ \ + \  0  \, \bfe_{\theta} \otimes \bfe_{z} \otimes \bfe_{r} 
 \ + \     \frac{1}{r} \,  \tau_{r z}    \, \bfe_{\theta} \otimes \bfe_{z} \otimes \bfe_{\theta}  
\ + \   0 \, \bfe_{\theta} \otimes \bfe_{z} \otimes \bfe_{z}   \\
& \ \ + \  \partial_{r} \tau_{z r}   \, \bfe_{z} \otimes \bfe_{r} \otimes \bfe_{r} 
\ + \   0   \, \bfe_{z} \otimes \bfe_{r} \otimes \bfe_{\theta}   
\ + \     \partial_{z} \tau_{z r}  \,  \bfe_{z} \otimes \bfe_{r} \otimes \bfe_{z}  \\
& \ \ + \ 0   \, \bfe_{z} \otimes \bfe_{\theta} \otimes \bfe_{r} 
\ + \   \frac{1}{r} \, \tau_{z r}   \, \bfe_{z} \otimes \bfe_{\theta} \otimes \bfe_{\theta}  
\ + \    0  \,   \bfe_{z} \otimes \bfe_{\theta} \otimes \bfe_{z}  \\
& \ \ + \  \partial_{r} \tau_{z z}   \, \bfe_{z} \otimes \bfe_{z} \otimes \bfe_{r} 
 \ + \     0   \, \bfe_{z} \otimes \bfe_{z} \otimes \bfe_{\theta}  
\ + \   \partial_{z} \tau_{z z}  \, \bfe_{z} \otimes \bfe_{z} \otimes \bfe_{z}  \, .
\end{align*}

Let, 
\[
D \tautens_{r} \ = \ \left( \begin{array}{ccc}
\partial_{r} \tau_{r r}   &     0        &   \partial_{z} \tau_{r r}   \\
0        &       \frac{1}{r} ( \tau_{r r} \, - \, \tau_{\theta \theta} )  &   0   \\
\partial_{r} \tau_{r z}   &     0        &   \partial_{z} \tau_{r z}  \end{array} \right) \, , \ \ \ 
D \tautens_{\theta} \ = \ \left( \begin{array}{ccc}
0        &       \frac{1}{r} ( \tau_{r r} \, - \, \tau_{\theta \theta} )  &   0   \\
\partial_{r} \tau_{\theta \theta}   &     0        &   \partial_{z} \tau_{\theta \theta}   \\
 0        &       \frac{1}{r} \,  \tau_{r z}   &   0  \end{array} \right) \, , 
\]
\[
\mbox{ and } \ \ \ 
D \tautens_{z} \ = \ \left( \begin{array}{ccc}
\partial_{r} \tau_{z r}   &     0        &   \partial_{z} \tau_{z r}   \\
0        &       \frac{1}{r} \, \tau_{z r}   &   0   \\
\partial_{r} \tau_{z z}   &     0        &   \partial_{z} \tau_{z z}  \end{array} \right) \, .
\]
Then, 
\begin{align*}
\| \tautens \|^{2}_{H^{1}(\breve{\Omega})} &= \ 
\int_{\breve{\Omega}} \| \tautens(x, y, z) \|^{2} \, d \breve{\Omega} \ + \ 
\int_{\breve{\Omega}} \| D \tautens(x, y, z) \|^{2} \, d \breve{\Omega} \\
&= \ 2 \pi \int_{\Omega} \left( \tau_{r r}^{2} \, + \, \tau_{r z}^{2} \, + \, \tau_{\theta \theta}^{2} \, + \, \tau_{z r}^{2} \, + \, 
\tau_{z z}^{2} \right) \, r \, dr \, dz \\
& \quad + \ 
2 \pi \int_{\Omega} \left( D \tautens_{r} : D \tautens_{r} \, + \, D \tautens_{\theta} : D \tautens_{\theta} \, + \, 
 D \tautens_{z} : D \tautens_{z} \right)  \, r \, dr \, dz \, . 
\end{align*}
Hence $\| \tautens \|^{2}_{H^{1}(\breve{\Omega})} < \infty$
implies, $\tau_{r r}, \, \tau_{\theta  \theta}, \, \tau_{z  z} \in  {}_{1}H^{1}(\Omega), \
 \tau_{r z}, \,  \tau_{z r} \in  {}_{1}V^{1}(\Omega), \ ( \tau_{r r} \, - \, \tau_{\theta  \theta}) \in   {}_{-1}L^{2}(\Omega)$.

Reversing the argument establishes the isomorphism between the spaces.
\end{proof}

In the discussions that follow, we take $U={}_{1}\mbf{L}^{2}(\Omega)$, $Q={}_{1}L^{2}(\Omega)$.  As the merdian stress 
tensor is made up of a tensor and scalar component, we introduce the space $\boldsymbol{\Sigma}(\Omega)$ defined by
\begin{align*}
\boldsymbol{\Sigma}(\Omega) = \{ (\sigtens, \sigma) \in {}_{1}\Ltens(\Omega, \mathbb{M}^{2}) \times {}_{1}L^{2}(\Omega) \; : 
\; \axidiv (\sigtens, \sigma) \in {}_{1}L^{2}(\Omega) \}.
\end{align*}

Associated with $\boldsymbol{\Sigma}(\Omega)$ we have the norm
\begin{align*}
\| (\sigtens, \sigma) \|_{\boldsymbol{\Sigma}(\Omega)} = \left( \|\axidiv (\sigtens, \sigma) \|^{2}_{{}_{1}\mbf{L}^{2}(\Omega)} + 
\| \sigtens \|^{2}_{{}_{1} \Ltens(\Omega ; \; \mathbb{M}^{2})} + \| \sigma \|^{2}_{{}_{1}L^{2}(\Omega)} \right)^{\frac{1}{2}}.
\end{align*}

Additionally, we define $\boldsymbol{S}(\Omega) \subset \boldsymbol{\Sigma}(\Omega)$ by
\begin{align*}
\boldsymbol{S}(\Omega) = \{ (\sigtens, 0) \in \boldsymbol{\Sigma}(\Omega) : 
\sigtens = \left( \begin{array}{c} 
       \mbf{w}^{t}  \\ \mbf{z}^{t}  \end{array} \right) \, ; \ \mbf{w}, \mbf{z} \in {}_{1}\mbf{VH}^{1}(\Omega)  \},
\end{align*}
with norm
\begin{align}
\| (\sigtens, 0) \|_{\boldsymbol{S}(\Omega)} = \left( \| \axidiv (\sigtens, 0) \|^{2}_{{}_{1}\mbf{L}^{2}(\Omega)} 
+ \| \mbf{w} \|^{2}_{{}_{1}\mbf{VH}^{1}(\Omega)} + \| \mbf{z} \|^{2}_{{}_{1}\mbf{VH}^{1}(\Omega)} \right)^{\frac{1}{2}} .
\end{align}

For convenience, when the context is clear, $\bSigma (\Omega)$ and $\bS(\Omega)$ will be denoted as $\bSigma$ and $\bS$.

 \setcounter{equation}{0}
\setcounter{figure}{0}
\setcounter{table}{0}
\setcounter{theorem}{0}
\setcounter{lemma}{0}
\setcounter{corollary}{0}
\setcounter{definition}{0}
\section{Axisymmetric Variational Formulation}
\label{sec:axisymmetric_form}
In this section we present the variational form of the axisymmetric meridian problem.  
This problem has many similarities with the elasticity problem in the Cartesian setting, however, new terms are 
introduced into the bilinear forms as a consequence of the change of variable from Cartesian to cylindrical coordinates.  
Details of the derivation can be found in Appendix \ref{sec:axisymmetric_elasticity}.

Analogous to \eqref{eq:weak_symmetry_elasticity}, define the bilinear forms
\begin{align}
\label{eq:axi_a_op_1}
& \tilde{a}_{}(.,.) : \boldsymbol{\Sigma} \times \boldsymbol{\Sigma} \rightarrow \mathbb{R} , \quad 
\tilde{a}_{}((\sigtens_{},\sigma),(\tautens_{},\tau)) = (\mathcal{A} \sigtens_{}, \tautens_{})_{} + (\mathcal{A} \sigma, \tau) - \frac{1}{2 \mu} \frac{\lambda}{2\mu + 3 \lambda} ( (\sigma, \Tr{\tautens})   + (\Tr{\sigtens_{}}, \tau)_{}),  \\
\label{eq:axi_b_op}
& \tilde{b}_{}(.,.) : \boldsymbol{\Sigma} \times U \rightarrow \mathbb{R} ,  \quad
\tilde{b}_{}((\tautens_{}, \tau), \mbf{u}) = (\nabla_{\text{axi}} \cdot \tautens_{}, \mbf{u}_{}) - (\dfrac{\tau}{r}, u_{r}) \, ,  \\
\label{eq:axi_c_op}
& \tilde{c}_{}(.,.): \boldsymbol{\Sigma} \times Q \rightarrow \mathbb{R} , \quad
\tilde{c}_{}((\sigtens_{},\sigma), p ) = (\sigtens_{}, \mathcal{S}^{2}(p)) \, , 
\end{align}
where the operator $\mathcal{A}$ applied to the scalar function $\sigma_{\theta \theta}$ is given by
(\ref{eq:complianceTensor}) for $m = 1$.

The axisymmetric meridian problem with weak symmetry is then: 
\textit{Given $\mbf{f} \in {}_{1}\mbf{L}^{2}(\Omega)$, find $((\sigtens_{}, \sigma), \mbf{u}_{}, p) \in \boldsymbol{\Sigma} \times U \times Q$ such that for all $((\tautens_{}, \tau), \mbf{v}_{}, q) \in \boldsymbol{\Sigma} \times U \times Q$}
\begin{align}
\label{eq:axi_saddle_point_1_weak_symmetry}
\tilde{a}_{}((\sigtens_{},\sigma),(\tautens_{},\tau)) + \tilde{b}_{}((\tautens_{},\tau), \mbf{u}_{} ) + \tilde{c}_{}((\tautens_{},\tau), p ) &= 0 \\
\label{eq:axi_saddle_point_2_weak_symmetry}
\tilde{b}_{}((\sigtens_{},\sigma), \mbf{v}_{} ) &= (\mbf{f},\mbf{v}_{})_{} \\
\label{eq:axi_saddle_point_3_weak_symmetry}
\tilde{c}_{}((\sigtens_{},\sigma), q ) & = 0 \, .
\end{align}

Of interest is to develop discrete inf-sup stable elements for the approximation of 
\eqref{eq:axi_saddle_point_1_weak_symmetry}-\eqref{eq:axi_saddle_point_3_weak_symmetry}.
In cylindrical coordinates, the divergence operator does not map polynomial spaces into polynomial spaces, so some of the standard techniques for verifying inf-sup stability cannot be used.  Thus, to help establish a variational formulation for which stable triples of finite elements may be verified to satisfy the discrete inf-sup condition, we make two modifications to \eqref{eq:axi_saddle_point_1_weak_symmetry}-\eqref{eq:axi_saddle_point_3_weak_symmetry}.

First, we add a grad-div stabilization term to $\tilde{a}_{}(\cdot,\cdot)$ and define a new bilinear form $a(\cdot,\cdot): \boldsymbol{\Sigma} \times \boldsymbol{\Sigma} : \rightarrow \mathbb{R}$
\begin{align}
\begin{split}
\label{eq:axi_a_grad_div}
a((\sigtens_{},\sigma),(\tautens_{},\tau)) &= \tilde{a}_{}((\sigtens_{},\sigma),(\tautens_{},\tau)) + \gamma (\nabla_{\text{axi}} \cdot (\sigtens_{},\sigma) , \nabla_{\text{axi}} \cdot (\tautens_{},\tau) )
\end{split}
\end{align}
where $\gamma$ is the grad-div stabilization term.  This stabilization term ensures that $a((\cdot,\cdot),(\cdot,\cdot))$ is coercive in the $\| \cdot \|_{\boldsymbol{\Sigma}}$ norm.  Unless specified otherwise, we take $\gamma = 1$.

Recall from \eqref{eq:elasticity_strong} that in cylindrical coordinates, $\axidiv (\sigtens, \sigma) = \mbf{f}$.  Therefore, to account for the grad-div stabilization term in the constituent equation, $(\mbf{f}, \axidiv (\tautens, \tau))$ must also be added to the right hand side of \eqref{eq:axi_saddle_point_1_weak_symmetry}.

For the second modification, recall that $\tilde{c}((\sigtens_{}, \sigma), q) = (\sigtens_{}, \mathcal{S}^{2}(q))$, and let $\mbf{x} = (r,z)^{t}$.  As described in Lemma \ref{lem:c_operator_result} below,
\begin{align}
\begin{split}
\label{eq:c_op_mod1}
\int_{\Omega} \sigtens_{} : \mathcal{S}^{2}(q) \;r \; d\; \Omega &= - \int_{\Omega} ( \axidiv (\sigtens, \sigma) \wedge \mbf{x}) \; q \; r \; d \Omega \\ 
& \quad  + \int_{\partial \Omega} (\sigtens \cdot \mbf{n}) \cdot \mbf{x}^{\perp} q \; r \; ds - \int_{\Omega} \sigtens : (\mbf{x}^{\perp} \otimes \nabla q) \; r \; d \Omega  
 - \int_{\Omega} \frac{1}{r} \sigma \; z \; q \; r\; d \Omega,
\end{split}
\end{align}
or equivalently
\begin{align}
\label{eq:c_op_mod2}
\begin{split}
\int_{\Omega} \sigtens_{} : \mathcal{S}^{2}(q) \;r \; d \Omega &+ \int_{\Omega} ( \axidiv (\sigtens, \sigma) \wedge \mbf{x}) \; q \; r \; d \Omega \\  
& = \int_{\partial \Omega} (\sigtens \cdot \mbf{n}) \cdot \mbf{x}^{\perp} q \; r \; ds - \int_{\Omega} \sigtens : (\mbf{x}^{\perp} \otimes \nabla q) \; r \; d \Omega 
 - \int_{\Omega} \sigma \; z \; q \; d \Omega.
\end{split}
\end{align}

In terms of establishing stable approximation elements via the construction of a suitable projection (see Theorem \ref{lem:thm_41}) it is
 more convenient to use equation \eqref{eq:c_op_mod2} than \eqref{eq:c_op_mod1}.  To introduce \eqref{eq:c_op_mod2} into the 
 weak form, we add $\int_{\Omega} (\axidiv (\sigtens_{}, \sigma) \wedge \mbf{x}) \; q \; r \; d \Omega $ to both sides of
  \eqref{eq:axi_saddle_point_3_weak_symmetry} giving
\begin{align}
\label{eq:new_c}
\tilde{c}((\sigtens_{}, \sigma), q) + \int_{\Omega} ( \axidiv (\sigtens_{}, \sigma) \wedge \mbf{x}) \; q \; r \; d \Omega = \int_{\Omega} (\mbf{f} \wedge \mbf{x}) \; q \; r \; d \Omega,
\end{align}
where we have used the relationship $\axidiv (\sigtens, \sigma) = \mbf{f}$ on the right hand side.  To represent the left hand side of \eqref{eq:new_c}, we define a new bilinear form $c(\cdot,\cdot): \boldsymbol{\Sigma} \times Q \rightarrow \mathbb{R}$ as
\begin{align}
\label{eq:modified_c}
\begin{split}
c((\sigtens_{}, \sigma), q) &:= \tilde{c}((\sigtens_{}, \sigma), q) + (\nabla_{\text{axi}} \cdot (\sigtens_{}, \sigma) \wedge \mbf{x}, q) \\
& \; =(\sigtens_{}, \mathcal{S}^{2}(q)) + (\nabla_{\text{axi}} \cdot (\sigtens_{}, \sigma) \wedge \mbf{x}, q).
\end{split}
\end{align}

Therefore, \eqref{eq:axi_saddle_point_3_weak_symmetry} becomes
\begin{align*}
c((\sigtens, \sigma), q) = (\mbf{f} \wedge \mbf{x}, q).
\end{align*} 

For notational consistency in the new formulation, we let $b(\cdot,\cdot) = \tilde{b}(\cdot,\cdot)$.

To maintain the saddle point structure of the variational formulation with the bilinear form $c(\cdot,\cdot)$, we need to add and subtract $(\axidiv (\tautens_{}, \tau) \wedge \mbf{x}, p)$ to the left hand side of \eqref{eq:axi_saddle_point_1_weak_symmetry}.  To understand the affect of this modification on the  formulation, first observe that 
\begin{align*}
\begin{split}
\axidiv (\tautens, \tau) \wedge \mbf{x} &= \begin{pmatrix} \dfrac{\partial \tau_{11}}{\partial r} + \dfrac{\partial \tau_{12}}{\partial z} + \dfrac{1}{r} (\tau_{11} - \tau) \\[3mm] \dfrac{\partial \tau_{21}}{\partial r} + \dfrac{\partial \tau_{22}}{\partial z} + \dfrac{1}{r} \tau_{21}\end{pmatrix} \wedge \begin{pmatrix} r \\ z \end{pmatrix} \\
&= z (\dfrac{\partial \tau_{11}}{\partial r} + \dfrac{\partial \tau_{12}}{\partial z} + \dfrac{1}{r} (\tau_{11}-\tau) ) - r (\dfrac{\partial \tau_{21}}{\partial r} + \dfrac{\partial \tau_{22}}{\partial z} + \dfrac{1}{r} \tau_{21}) 
\ =  \ (\axidiv(\tautens, \tau)) \cdot \mbf{x}^{\perp}.
\end{split}
\end{align*}

Therefore,
\begin{align}
\label{eq:expanded_c_star}
\begin{split}
( (\axidiv (\tautens,\tau)) \wedge \mbf{x}, p ) &= \int_{\Omega} \axidiv (\tautens, \tau) \wedge \mbf{x} \; p \; r \; d\Omega 
\ = \ \int_{\Omega} (\axidiv (\tautens,\tau)) \cdot \mbf{x}^{\perp} \; p \; r \; d \Omega \\
& = b((\tautens,\tau), \mbf{x}^{\perp} \; p).
\end{split}
\end{align}

This shows that $((\axidiv(\tautens, \tau) \wedge \mbf{x}, p)$ can be expressed as $b((\tautens,\tau), \mbf{x}^{\perp} p)$.  As a result, the negative part of $((\axidiv(\tautens, \tau)\wedge \mbf{x}, p)$ that is used to balance the constituent equation enters into the expression as part of the bilinear form $b(\cdot, \cdot)$.  That is,
\begin{align}
\label{eq:mod_b_term_with_w}
b((\tautens, \tau), \mbf{u} ) - b((\tautens,\tau),\mbf{x}^{\perp} p) = b((\tautens,\tau),\mbf{u} - \mbf{x}^{\perp} p).
\end{align}

To reflect the fact that the expression within the bilinear form $b(\cdot,\cdot)$ no longer depends only on the displacement $\mbf{u}$, we define a new variable $\mbf{w} = \mbf{u} - \mbf{x}^{\perp} p$.  As we discuss further in Sections \ref{sec:convergence_analysis} and \ref{sec:computational_results}, once the solution has be found, the displacement $\mbf{u}=\mbf{w}+\mbf{x}^{\perp}p$ can be accurately recovered during a post-processing step. 

Therefore, an equivalent but modified version of the axisymmetric linear elasticity problem \eqref{eq:axi_saddle_point_1_weak_symmetry}-\eqref{eq:axi_saddle_point_3_weak_symmetry} can be expressed as: 
\textit{Given $\mbf{f} \in {}_{1}\mbf{L}^{2}(\Omega)$ find $((\sigtens_{}, \sigma), \mbf{w}, p) \in \boldsymbol{\Sigma} \times U \times Q$ such that for all $((\tautens_{}, \tau), \mbf{v}, q) \in \boldsymbol{\Sigma} \times U \times Q$}
\begin{align}
\label{eq:axi_saddle_point_1_mod}
a((\sigtens_{},\sigma),(\tautens_{},\tau)) + b((\tautens_{},\tau), \mbf{w} ) + c((\tautens_{},\tau), p ) &= (\mbf{f}, \nabla_{\text{axi}} \cdot (\tautens_{}, \tau) ) \\
\label{eq:axi_saddle_point_2_mod}
b((\sigtens_{},\sigma), \mbf{v} )   &= (\mbf{f},\mbf{v})_{}  \\
\label{eq:axi_saddle_point_3_mod}
c((\sigtens_{},\sigma), q ) & = (\mbf{f} \wedge \mbf{x}, q)_{} \, .
\end{align}

\subsection{Well posedness of the variational formulation \eqref{eq:axi_saddle_point_1_mod}-\eqref{eq:axi_saddle_point_3_mod}}
\label{ssec:wellposed}
To establish the well posedness of the saddle point formulation, 
\eqref{eq:axi_saddle_point_1_mod}-\eqref{eq:axi_saddle_point_3_mod}, we show that $a(\cdot , \cdot)$ is bounded and coercive
on $\boldsymbol{\Sigma}  \times \boldsymbol{\Sigma}$, and that $( (\tautens, \tau), \mbf{v}, p)$ satisfy the
inf-sup condition
\begin{align}
\label{eq:main_inf_sup}
\inf_{\mbf{v} \in U, p \in Q} \sup_{ (\tautens, \tau) \in \bSigma} \dfrac{b( (\tautens, \tau),\mbf{v}) 
+ c( (\tautens, \tau),p)}{\|  (\tautens, \tau) \|_{\bSigma} ( \| \mbf{v} \|_{U} + \| p \|_{Q} )} \geq C.
\end{align}

\begin{lemma}
\label{lem:a_bounded}
The operator $a(.,.)$ defined in (\ref{eq:axi_a_grad_div}) is bounded.  That is,
\begin{align}
\label{eq:a_bounded}
a((\sigtens, \sigma), (\tautens, \tau)) \leq C \| (\sigtens, \sigma) \|_{\boldsymbol{\Sigma}} \| (\tautens, \tau) \|_{\boldsymbol{\Sigma}}
\end{align}
for some $C > 0$ and all $(\sigtens, \sigma), (\tautens, \tau) \in \boldsymbol{\Sigma}$.
\end{lemma}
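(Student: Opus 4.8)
The plan is to bound each of the terms that make up $a((\sigtens,\sigma),(\tautens,\tau))$ separately and then combine. Recalling the definition \eqref{eq:axi_a_grad_div},
\[
a((\sigtens,\sigma),(\tautens,\tau)) = (\mathcal{A}\sigtens,\tautens) + (\mathcal{A}\sigma,\tau) - \tfrac{1}{2\mu}\tfrac{\lambda}{2\mu+3\lambda}\big((\sigma,\Tr{\tautens}) + (\Tr{\sigtens},\tau)\big) + \gamma(\axidiv(\sigtens,\sigma),\axidiv(\tautens,\tau)),
\]
there are essentially four kinds of terms. First I would dispatch the grad-div term: by Cauchy--Schwarz in ${}_{1}L^{2}(\Omega)$,
\[
\gamma\,(\axidiv(\sigtens,\sigma),\axidiv(\tautens,\tau)) \le \gamma\,\|\axidiv(\sigtens,\sigma)\|_{{}_{1}\mbf{L}^{2}(\Omega)}\,\|\axidiv(\tautens,\tau)\|_{{}_{1}\mbf{L}^{2}(\Omega)} \le \gamma\,\|(\sigtens,\sigma)\|_{\bSigma}\,\|(\tautens,\tau)\|_{\bSigma},
\]
since both factors are dominated by the full $\bSigma$-norm. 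The cross terms $(\sigma,\Tr{\tautens})$ and $(\Tr{\sigtens},\tau)$ are likewise handled by Cauchy--Schwarz together with the trivial bound $\|\Tr{\tautens}\|_{{}_{1}L^{2}(\Omega)} \le C\|\tautens\|_{{}_{1}\Ltens(\Omega;\mathbb{M}^{2})}$ (sum of diagonal entries), and both $\|\sigma\|_{{}_{1}L^{2}(\Omega)}$, $\|\tau\|_{{}_{1}L^{2}(\Omega)}$ are controlled by the $\bSigma$-norm.

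The only remaining point is the boundedness of the compliance operator $\mathcal{A}$. From \eqref{eq:complianceTensor}, $\mathcal{A}\sigtens = \tfrac{1}{2\mu}\big(\sigtens - \tfrac{\lambda}{2\mu+m\lambda}\Tr{\sigtens}I\big)$, which is a fixed linear map on matrices whose operator norm depends only on $\mu$, $\lambda$, $m$; hence $\|\mathcal{A}\sigtens\|_{{}_{1}\Ltens(\Omega;\mathbb{M}^{2})} \le C\|\sigtens\|_{{}_{1}\Ltens(\Omega;\mathbb{M}^{2})}$, and analogously $\|\mathcal{A}\sigma\|_{{}_{1}L^{2}(\Omega)} \le C\|\sigma\|_{{}_{1}L^{2}(\Omega)}$ for the scalar version ($m=1$). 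Applying Cauchy--Schwarz in the weighted $L^{2}$ inner product then gives $(\mathcal{A}\sigtens,\tautens) \le C\|\sigtens\|_{{}_{1}\Ltens(\Omega;\mathbb{M}^{2})}\|\tautens\|_{{}_{1}\Ltens(\Omega;\mathbb{M}^{2})}$ and $(\mathcal{A}\sigma,\tau) \le C\|\sigma\|_{{}_{1}L^{2}(\Omega)}\|\tau\|_{{}_{1}L^{2}(\Omega)}$. Summing the four estimates, each majorized by a constant times $\|(\sigtens,\sigma)\|_{\bSigma}\|(\tautens,\tau)\|_{\bSigma}$, yields \eqref{eq:a_bounded} with a constant depending only on $\mu$, $\lambda$ and $\gamma$.

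I do not expect any genuine obstacle here: the lemma is a routine continuity estimate, the one mild subtlety being simply to remember that the weighted inner product $(\cdot,\cdot)$ carries the factor $r\,dr\,dz$ so that Cauchy--Schwarz must be applied in the ${}_{1}L^{2}$ sense, and that every constituent norm appearing on the right is one of the three summands in the definition of $\|\cdot\|_{\bSigma}$. The mildly delicate point — that $\axidiv(\sigtens,\sigma)$ involves a $\tfrac{1}{r}\sigma$ term — causes no trouble because that quantity is measured in ${}_{1}\mbf{L}^{2}(\Omega)$, which is exactly the norm built into $\bSigma$; no fractional-weight ($\alpha=-1$) norms enter the bilinear form $a(\cdot,\cdot)$ itself.
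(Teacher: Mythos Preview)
Your proposal is correct and follows essentially the same approach as the paper: both apply Cauchy--Schwarz in the weighted inner product to each constituent term and then absorb the resulting ${}_{1}L^{2}$ norms into the $\bSigma$-norm. The only cosmetic difference is that the paper first expands $\mathcal{A}$ and regroups so that all trace contributions appear as a single factor $(\Tr{\sigtens}+\sigma,\,\Tr{\tautens}+\tau)$, whereas you bound the $\mathcal{A}$ terms and the cross trace terms separately; both routes lead to the same estimate.
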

\begin{proof}
Using the Cauchy-Schwarz inequality, 
\begin{align}
\label{eq:a_bounded_1}
\begin{split}
a((\sigtens, \sigma), (\tautens, \tau)) & = \dfrac{1}{2 \mu} ( \sigtens, \tautens) + \dfrac{1}{2 \mu} (\sigma, \tau) - \dfrac{1}{2 \mu} \dfrac{\lambda}{2 \mu + 3 \lambda} (\text{tr} (\sigtens) + \sigma, \text{tr} (\tautens) + \tau) \\
& \quad \quad + ( \nabla_{\text{axi}} \cdot (\sigtens, \sigma), \nabla_{\text{axi}} \cdot (\tautens, \tau) ) \\
&\leq \dfrac{1}{2 \mu} \left( \| \sigtens \|_{{}_{1}\Ltens(\Omega)} \| \tautens \|_{{}_{1}\Ltens(\Omega)} + \| \sigma \|_{{}_{1}L^{2}(\Omega)} \| \tau \|_{{}_{1}L^{2}(\Omega)} \right) \\
&\quad \quad + \| \axidiv (\sigtens, \sigma) \|_{{}_{1}\mbf{L}^{2}(\Omega)} \| \axidiv (\tautens, \tau) \|_{{}_{1}\mbf{L}^{2}(\Omega)} \\
& \quad \quad + \dfrac{1}{2 \mu}\dfrac{\lambda}{2 \mu + 3 \lambda} \| \text{tr} (\sigtens) + \sigma\|_{{}_{1}L^{2}(\Omega)} \| \text{tr} (\tautens) + \tau\|_{{}_{1}L^{2}(\Omega)}  \\
& \leq C \left( \|(\sigtens, \sigma)\|_{\boldsymbol{\Sigma}} \|(\tautens, \tau)\|_{\boldsymbol{\Sigma}} +  \| \text{tr} (\sigtens) + \sigma\|_{{}_{1}L^{2}(\Omega)} \| \text{tr} (\tautens) + \tau\|_{{}_{1}L^{2}(\Omega)} \right). 
\end{split}
\end{align}
Further, for any $(\sigtens, \sigma) \in \boldsymbol{\Sigma}$,
\begin{align}
\label{eq:a_bounded_2}
\begin{split}
\| \text{tr}(\sigtens) + \sigma \|_{{}_{1} L^{2}(\Omega)} &= \int_{\Omega} (\text{tr} (\sigtens) + \sigma)^{2} r \; d \Omega \leq C \left( (\sigtens, \sigtens) + (\sigma, \sigma) \right) \\
&\leq C \| (\sigtens, \sigma) \|_{{}_{1}\Ltens(\Omega;\mathbb{M}^{2} \times \mathbb{R}^{1} )} \leq C \| (\sigtens, \sigma ) \|_{\bSigma}.
\end{split}
\end{align}
Combining \eqref{eq:a_bounded_1} and \eqref{eq:a_bounded_2} yields \eqref{eq:a_bounded}.
\end{proof}

\begin{lemma}
\label{lem:a_coercive}
The operator $a(.,.)$ defined in (\ref{eq:axi_a_grad_div}) is coercive.  That is,
\begin{align}
a((\sigtens, \sigma),(\sigtens, \sigma)) \geq c \|(\sigtens, \sigma)\|^{2}_{\boldsymbol{\Sigma}} \text{ where } c = \min \{ \dfrac{1}{2 \mu} \dfrac{1}{2 \mu + 3 \lambda}, 1 \}.
\end{align}
\end{lemma}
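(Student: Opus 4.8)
The plan is to test the bilinear form on the diagonal, i.e.\ to estimate $a((\sigtens,\sigma),(\sigtens,\sigma))$ from below, and to control the three resulting contributions separately: the two positive quadratic terms coming from the compliance tensor, the indefinite (negative, when $\lambda\ge 0$) cross term that the compliance tensor produces, and the grad--div term. The grad--div term will directly deliver control of $\|\axidiv(\sigtens,\sigma)\|_{{}_{1}\mbf{L}^{2}(\Omega)}^{2}$, so the only real work is to show that the positive quadratic terms dominate the indefinite trace term with room to spare.

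First I would expand the definitions (cf.\ the computation in the proof of Lemma~\ref{lem:a_bounded}), with $\tautens=\sigtens$, $\tau=\sigma$, $\gamma=1$:
\begin{align*}
a((\sigtens,\sigma),(\sigtens,\sigma))
&= \frac{1}{2\mu}\Big(\|\sigtens\|_{{}_{1}\Ltens(\Omega;\mathbb{M}^{2})}^{2}+\|\sigma\|_{{}_{1}L^{2}(\Omega)}^{2}\Big)
 -\frac{1}{2\mu}\frac{\lambda}{2\mu+3\lambda}\,\|\Tr{\sigtens}+\sigma\|_{{}_{1}L^{2}(\Omega)}^{2} \\
&\quad +\|\axidiv(\sigtens,\sigma)\|_{{}_{1}\mbf{L}^{2}(\Omega)}^{2}.
\end{align*}
Since $c\le 1$, the last term is already $\ge c\,\|\axidiv(\sigtens,\sigma)\|_{{}_{1}\mbf{L}^{2}(\Omega)}^{2}$, so it suffices to prove the pointwise algebraic inequality
\[
\frac{1}{2\mu}\big(|M|^{2}+s^{2}\big)-\frac{1}{2\mu}\frac{\lambda}{2\mu+3\lambda}\big(\Tr{M}+s\big)^{2}\ \ge\ c\,\big(|M|^{2}+s^{2}\big)
\]
for every $2\times2$ matrix $M$ (with $|M|^{2}=\sum_{ij}M_{ij}^{2}$) and scalar $s$, and then to integrate against the nonnegative weight $r$. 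To establish this I would identify $(M,s)$ with the $3\times3$ matrix $\widehat M$ obtained by placing $s$ in the $(3,3)$ slot (and zeros in the rest of the third row and column), so that $|\widehat M|^{2}=|M|^{2}+s^{2}$ and $\Tr{\widehat M}=\Tr{M}+s$, and note that the left-hand side is exactly the three-dimensional isotropic compliance quadratic form $\mathcal{A}\widehat M:\widehat M$. Splitting $\widehat M=\widehat M^{\mathrm{dev}}+\tfrac13\Tr{\widehat M}\,I$ into its Frobenius-orthogonal deviatoric and spherical parts gives $\mathcal{A}\widehat M:\widehat M=\tfrac{1}{2\mu}|\widehat M^{\mathrm{dev}}|^{2}+\tfrac{1}{3(2\mu+3\lambda)}(\Tr{\widehat M})^{2}$, a sum of nonnegative terms; since $|\widehat M|^{2}=|\widehat M^{\mathrm{dev}}|^{2}+\tfrac13(\Tr{\widehat M})^{2}$ this is bounded below by $\min\{\tfrac1{2\mu},\tfrac1{2\mu+3\lambda}\}\,|\widehat M|^{2}\ge c\,(|M|^{2}+s^{2})$. (Equivalently, one can skip the $3\times3$ embedding and simply use $(\Tr{M}+s)^{2}\le 3(|M|^{2}+s^{2})$ via Cauchy--Schwarz on the vector $(M_{11},M_{22},s)$, which after simplification produces the factor $\tfrac{1}{2\mu+3\lambda}$.)

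Finally I would multiply the pointwise inequality by $r\ge 0$, integrate over $\Omega$ — legitimate precisely because the weight is nonnegative — to get $\frac{1}{2\mu}(\|\sigtens\|^{2}+\|\sigma\|^{2})-\frac{1}{2\mu}\frac{\lambda}{2\mu+3\lambda}\|\Tr{\sigtens}+\sigma\|^{2}\ge c(\|\sigtens\|^{2}+\|\sigma\|^{2})$ in the $_{1}L^{2}$ norms, and add this to the grad--div bound to conclude $a((\sigtens,\sigma),(\sigtens,\sigma))\ge c\,\|(\sigtens,\sigma)\|_{\bSigma}^{2}$. The main (and essentially the only) obstacle is the spectral estimate for $\mathcal{A}$, i.e.\ absorbing the indefinite trace term into the positive quadratic terms; the weighted integration and the handling of the grad--div term (where $\gamma=1\ge c$) are routine bookkeeping.
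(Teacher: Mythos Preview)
Your proof is correct and arrives at the same coercivity constant $\tfrac{1}{2\mu+3\lambda}$ for the $L^{2}$ part that the paper's own proof actually derives. The paper proceeds by a direct pointwise computation: it uses the elementary AM--GM inequalities $\sigma_{11}^{2}+\sigma_{22}^{2}\ge 2\sigma_{11}\sigma_{22}$ (and cyclic) to show $(\Tr{\sigtens}+\sigma)^{2}\le 3(|\sigtens|^{2}+\sigma^{2})$, then absorbs the trace term and reads off the constant. Your parenthetical alternative---Cauchy--Schwarz on $(M_{11},M_{22},s)$---is exactly this argument in disguise. Your primary route, embedding $(M,s)$ into a $3\times3$ matrix and diagonalizing the isotropic compliance tensor via the deviatoric--spherical split, is a more structural way to see the same inequality: it makes transparent \emph{why} the factor $3$ (hence $2\mu+3\lambda$) appears, and it would generalize immediately to other dimensions or to anisotropic compliance tensors with a known spectral gap. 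The paper's bare-hands version is slightly shorter for this specific case but less illuminating.
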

\begin{proof}
We begin with the observation that
\begin{align}
\label{eq:a_coercive_1}
\begin{split}
a((\sigtens, \sigma), (\sigtens, \sigma)) &= (\mathcal{A} \sigtens, \sigtens) + (\mathcal{A}\sigma, \sigma ) - \dfrac{1}{2 \mu} \dfrac{\lambda}{2  + 3 \lambda} [(\sigma, \text{tr}(\sigtens)) + (\text{tr}(\sigtens), \sigma)] \\
& \quad \quad + ( \nabla_{\text{axi}} \cdot (\sigtens, \sigma), \nabla_{\text{axi}} \cdot (\sigtens, \sigma) ) \\
& = \dfrac{1}{2 \mu} ( \sigtens, \sigtens) + \dfrac{1}{2 \mu} (\sigma, \sigma) - \dfrac{1}{2 \mu} \dfrac{\lambda}{2 \mu + 3 \lambda} (\text{tr} (\sigtens) + \sigma, \text{tr} (\sigtens) + \sigma) \\
& \quad \quad + ( \nabla_{\text{axi}} \cdot (\sigtens, \sigma), \nabla_{\text{axi}} \cdot (\sigtens, \sigma) ).
\end{split}
\end{align}

Next we must incorporate the $(\text{tr} (\sigtens) + \sigma, \text{tr} (\sigtens) + \sigma)$ term into \eqref{eq:a_coercive_1} in a way that will allow us to obtain the $\boldsymbol{\Sigma}$ norm.  To do so, we start by adding the inequalities
\begin{align*}
\begin{split}
\sigma_{11}^{2} + \sigma_{22}^{2} \geq 2 \sigma_{11} \sigma_{22}, \quad \sigma_{22}^{2} + \sigma^{2} \geq 2\sigma_{22} \sigma, \quad \sigma_{11}^{2} + \sigma^{2} \geq 2 \sigma_{11} \sigma
\end{split}
\end{align*}
to get that $2 (\sigma_{11}^{2} + \sigma_{22}^{2} + \sigma^{2}) \geq 2 (\sigma_{11} \sigma_{22} + \sigma_{22} \sigma + \sigma_{11} \sigma )$.  Adding additional positive terms to the left-hand side of this inequality gives
\begin{align*}
2(\sigma_{11}^{2} + \sigma_{22}^{2} + \sigma^{2} ) + 3 (\sigma_{12}^{2} + \sigma_{21}^{2} ) \geq 2 (\sigma_{11} \sigma_{22} + \sigma_{22} \sigma + \sigma_{11}\sigma).
\end{align*}

Since $(\text{tr}(\sigtens) + \sigma)^{2} = \sigma^{2}_{11} + \sigma_{22}^{2} + \sigma^{2} + 2 (\sigma_{11} \sigma_{22} + \sigma_{22} \sigma + \sigma_{11}\sigma )$ and $\mu, \lambda > 0$,
\begin{align}
\begin{split}
\label{eq:a_coercive_2}
\dfrac{1}{2 \mu}\dfrac{\lambda}{2 \mu + 3 \lambda} (\text{tr}(\sigtens) + \sigma,\text{tr}(\sigtens) + \sigma) &\leq \dfrac{1}{2 \mu}\dfrac{3 \lambda}{2 \mu + 3 \lambda} \int_{\Omega} (\sigma_{11}^{2} + \sigma_{12}^{2} + \sigma_{21}^{2} + \sigma_{22}^{2} + \sigma^{2}) \; r \; d \Omega  \\
& = \dfrac{1}{2 \mu}\dfrac{3 \lambda}{2 \mu + 3 \lambda} (\sigtens, \sigtens) + \dfrac{1}{2 \mu}\dfrac{3 \lambda}{2 \mu + 3 \lambda}  (\sigma, \sigma).
\end{split}
\end{align}

Combining \eqref{eq:a_coercive_1} and \eqref{eq:a_coercive_2}
\begin{align*}
\label{eq:a_coercive_3}
\begin{split}
a((\sigtens, \sigma), (\sigtens, \sigma)) 
& \geq \dfrac{1}{2 \mu}\dfrac{2 \mu}{2 \mu + 3 \lambda} \left( ( \sigtens, \sigtens) + (\sigma, \sigma) \right) + ( \nabla_{\text{axi}} \cdot (\sigtens, \sigma), \nabla_{\text{axi}} \cdot (\sigtens, \sigma) )\\
&= \dfrac{1}{2 \mu + 3 \lambda} \| (\sigtens, \sigma )\|^{2}_{_{1} \mbf{L}^{2} (\Omega; \mathbb{M}^{2} \times \mathbb{R}^{1})} + \| \nabla_{\text{axi}} \cdot ( \sigtens, \sigma) \|^{2}_{_{1} \Ltens(\Omega)} \\
& \geq c \| (\sigtens,\sigma) \|^{2}_{\boldsymbol{\Sigma}} .
\end{split}
\end{align*}
\end{proof}

\subsubsection{Satisfying the continuous inf-sup condition \eqref{eq:main_inf_sup}}
\label{sssec:continous_inf_sup}

To establish the inf-sup condition \eqref{eq:main_inf_sup}, we follow a similar two step argument as 
used in \cite{BBF1} for the planar elasticity problem. In Step 1 a $(\tautens_{1}, \tau_{1})$ is found such that,
for $\mbf{v}, p$ given, $b( (\tautens_{1}, \tau_{1}),\mbf{v})  =  \| \mbf{v} \|_{U}^{2}$. Then, in Step 2 $(\tautens_{2}, \tau_{2})$
is constructed to handle the $c( \cdot , \cdot)$ term, while satisfying $b( (\tautens_{2}, \tau_{2}),\mbf{v})  =   0$.
The following lemma is useful in the construction of $(\tautens_{2}, \tau_{2})$.
\begin{lemma}
\label{lem:consttau2}
Given $\beta \in {}_{1}L^{2}(\Omega)$ there exist $(\tautens, \tau) \in \bSigma$ such that
\begin{equation}
\nabla_{axi} \cdot (\tautens, \tau) = \mbf{0} , \ \ \ \  \text{ as}( \tautens ) = 
\begin{pmatrix} 0 & \beta \\ - \beta & 0\end{pmatrix} ,
\ \ \ \ \mbox{ and } \ \  \| (\tautens, \tau) \|_{\bSigma}  \ \le \ C \, \| \beta \|_{{}_{1}L^{2}(\Omega)} \, .
\label{condtau2}
\end{equation}
\end{lemma}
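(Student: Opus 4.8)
The plan is to construct $(\tautens, \tau)$ explicitly from a suitable scalar potential so that the divergence-free constraint is automatic. Given $\beta \in {}_{1}L^{2}(\Omega)$, I would first solve an auxiliary elliptic problem on the meridian domain $\Omega$ to produce a scalar function whose mixed second derivatives recover $\beta$ as the skew part, while the symmetric combinations are controlled. A clean way to do this is to look for a stream-function-type representation: set $\tautens$ to be built from $\axicurl$ applied to an appropriate vector (so that $\axidiv \tautens$ vanishes identically by the $\axidiv \circ \axicurl = 0$ identity), and then adjust the scalar component $\tau = \tau_{\theta\theta}$ to kill the residual $-\frac{1}{r}\tau$ term appearing in the first row of $\axidiv(\tautens,\tau)$. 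Concretely, I expect to take $\tautens = \axicurl \mbf{p}$ for $\mbf{p} = (p_r, p_z)^t$ with $p_z$ chosen so that $\tau_{12} - \tau_{21} = 2\beta$ (using the explicit formula \eqref{eq:axicurl_def}), and then solve for $\tau$ so that the full axisymmetric divergence vanishes; the freedom in $p_r$ is used to arrange this consistently.

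The key steps, in order, are: (1) Write down the candidate $(\tautens,\tau)$ in terms of a potential and verify algebraically, using \eqref{eq:axicurl_def} and the definition of $\axidiv(\sigtens,\sigma)$ right before Lemma~\ref{redten}, that $\axidiv(\tautens,\tau) = \mbf{0}$ holds pointwise; (2) check that $\text{as}(\tautens) = \begin{pmatrix} 0 & \beta \\ -\beta & 0\end{pmatrix}$, which should be immediate from the construction since the skew part of $\axicurl\mbf{p}$ is $\tfrac12(\partial_x p_z + \partial_y p_r)$-type combination we have prescribed; (3) establish the regularity $(\tautens,\tau) \in \bSigma$, i.e. $\tautens \in {}_{1}\Ltens(\Omega;\mathbb{M}^2)$, $\tau \in {}_{1}L^2(\Omega)$, and $\axidiv(\tautens,\tau) = \mbf{0} \in {}_{1}L^2(\Omega)$ (the last being trivial once (1) holds); (4) prove the norm bound $\|(\tautens,\tau)\|_{\bSigma} \le C\|\beta\|_{{}_{1}L^2(\Omega)}$ via elliptic regularity / the a priori estimate for the auxiliary problem solved in step (1).

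The main obstacle is step (3)–(4): obtaining the required weighted Sobolev regularity and the norm estimate. Because the cylindrical operators carry $\tfrac{1}{r}$ factors, the auxiliary elliptic problem lives in weighted spaces, and one must invoke a weighted-space regularity result (of the type in \cite{BernardiDaugeMaday} or \cite{BelhachmiEtAl}) to pass from $\beta \in {}_{1}L^2(\Omega)$ to a potential in ${}_{1}H^2$-type spaces with a bound linear in $\|\beta\|_{{}_{1}L^2(\Omega)}$; care is needed near the rotation axis $r=0$, where the $\tfrac{1}{r}$ terms must be shown not to blow up the norm, which is exactly where the structural constraint $(\tau_{rr}-\tau_{\theta\theta}) \in {}_{-1}L^2(\Omega)$ from Lemma~\ref{redten} enters. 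Once the weighted elliptic estimate is in hand, the rest is bookkeeping: collecting the $_{1}L^2$ norms of the entries of $\tautens$, of $\tau$, and of $\axidiv(\tautens,\tau) = \mbf{0}$ to assemble $\|(\tautens,\tau)\|_{\bSigma}$ and absorb all constants into a single $C$ depending only on $\Omega$.
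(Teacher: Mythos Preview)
Your sketch has the right spirit but contains a concrete gap at exactly the point you flag as the main obstacle. If you set $\tautens = \axicurl \mbf{p}$, then $\axidiv \tautens_{1} = \axidiv \tautens_{2} = 0$ identically, so the first row of $\axidiv(\tautens,\tau)$ reduces to $-\tfrac{1}{r}\tau$, forcing $\tau = 0$ (there is nothing to ``adjust''). The skew-part condition then reads $\axidiv \mbf{p} = -2\beta$, which is fine; but the $(2,2)$ entry of your $\tautens$ is $\tau_{22} = -\tfrac{1}{r}\partial_{r}(r p_{z}) = -\partial_{r}p_{z} - \tfrac{1}{r}p_{z}$, and for this to lie in ${}_{1}L^{2}(\Omega)$ you need $p_{z} \in {}_{-1}L^{2}(\Omega)$. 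Solving $\axidiv \mbf{p} = -2\beta$ in the natural way (e.g.\ via the 3D lift used elsewhere in the paper) yields only $\mbf{p} \in {}_{1}\mbf{VH}^{1}(\Omega)$, i.e.\ $p_{r} \in {}_{1}V^{1}$ but merely $p_{z} \in {}_{1}H^{1}$, with no decay of $p_{z}$ at the axis. So the pure curl-potential construction does not place $(\tautens,0)$ in $\bSigma$, and the weighted-elliptic-regularity references you cite do not supply the missing $p_{z} \in {}_{-1}L^{2}$ either.

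The paper's proof avoids this by \emph{not} using a curl potential with $\tau=0$. Instead it writes out the three scalar constraints \eqref{ytr1}--\eqref{ytr3}, integrates them, and recognizes the resulting relation \eqref{ytr11} as the first row of an axisymmetric elasticity system \eqref{ytr12} with right-hand side $(2\beta,0)^{t}$. That system is then lifted to the full 3D domain $\breve{\Omega}$ and solved there, where standard Cartesian $H^{1}$ regularity for elasticity on convex polyhedra (Grisvard \cite{gri921}) applies. The isomorphism of Lemma~\ref{redten} then translates $\sigtens \in \Htens^{1}(\breve{\Omega},\mathbb{M}^{3})$ into exactly the weighted regularity needed on $\Omega$: in particular $\sigma_{rz} \in {}_{1}V^{1}(\Omega)$, which is what makes the $\tfrac{1}{r}\sigma_{rz}$ term in the paper's $\tau_{22}$ land in ${}_{1}L^{2}$. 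The entries $\tau_{11},\tau_{12},\tau_{21},\tau_{22},\tau$ are then read off as first derivatives of the $\sigma$-components, and the norm bound follows from $\|\sigtens\|_{\Htens^{1}(\breve{\Omega})} \le C\|\beta\|_{{}_{1}L^{2}(\Omega)}$. The key idea you are missing is this detour through 3D Cartesian regularity via Lemma~\ref{redten}, which manufactures the axis behaviour that a direct 2D weighted argument does not obviously give.
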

\begin{proof}
From \eqref{condtau2},
\begin{align}
\frac{1}{2} \big( \tau_{1 2} - \tau_{2 1} \big) = \beta \, , \ \ & \Rightarrow \ \ \tau_{2 1} = \tau_{1 2} - 2 \beta \, ,   \label{ytr1} \\
\partial_{r} \tau_{1 1} + \frac{1}{r} \tau_{1 1}  & + \partial_{z} \tau_{1 2} - \frac{1}{r} \tau \ = \ 0 \, ,  \label{ytr2} \\
\partial_{r} \tau_{2 1} + \frac{1}{r} \tau_{2 1}  & + \partial_{z} \tau_{2 2}  \ = \ 0 \, .  \label{ytr3} 
\end{align}
Substituting \eqref{ytr1} into \eqref{ytr3}, then multiplying \eqref{ytr2} and \eqref{ytr3} through by $r$ and simplifying we obtain
\begin{align}
 \partial_{r} (r \, \tau_{1 1}) +  \partial_{z} (r \, \tau_{1 2} ) -   \tau &= 0  \, ,  \label{ytr4} \\
  \partial_{r} (r \, \tau_{1 2}) +  \partial_{z} (r \, \tau_{2 2})    &=  \partial_{r} (2 r \, \beta)  \, .  \label{ytr5} 
\end{align}
Integrating \eqref{ytr4} with respect to $z$, and \eqref{ytr5} with respect to $r$, yields for arbitrary $f_{1}$ and $f_{2}$,
\begin{align}
\int^{z} \partial_{r} (r \, \tau_{1 1})  \, dz \ +  \  r \, \tau_{1 2} \  -  \int^{z}  \tau \, dz &= f_{1}(r)  \, ,  \label{ytr6} \\
  r \, \tau_{1 2} \ +  \  \int^{r} \partial_{z} (r \, \tau_{2 2}) \, dr   &=   2 r \, \beta \ + \ f_{2}(z)  \, .  \label{ytr7} 
\end{align}
Interchanging the order of integration and differentiation, and then subtracting \eqref{ytr6} from \eqref{ytr7}, yields
\[
\partial_{r}  \left( - \int^{z} (r \, \tau_{1 1})  \, dz  \right) \ + \
 \partial_{z} \left( \int^{r}  (r \, \tau_{2 2}) \, dr \right) \ + \   \int^{z}  \tau \, dz
 \ = \ 2 r \, \beta \ + \ f_{2}(z) \ - \ f_{1}(r) \, . 
 \]
 Dividing through by $r$, choosing $f_{1} = f_{2} = 0$, and rearranging we have
 \be
 \frac{1}{r} \partial_{r}  \left( r (- \int^{z} \tau_{1 1}  \, dz  ) \right) \ + \
  \partial_{z} \left(  \frac{1}{r} \int^{r}  (r \, \tau_{2 2}) \, dr \right) \ - \   \frac{1}{r} \left( - \int^{z}  \tau \, dz \right)
 \ = \ 2 \beta \, . 
\label{ytr11}
\ee
\be
\mbox{Let } \ \ \  \sigma_{r r} = (- \int^{z} \tau_{1 1}  \, dz  ) \, , \ \ \ \sigma_{r z} =  \frac{1}{r} \int^{r}  (r \, \tau_{2 2}) \, dr \, , \ \ 
\mbox{ and  } \sigma_{\theta \theta} = - \int^{z}  \tau \, dz \, .
\label{ytr115}
\ee
Then, \eqref{ytr11} can be embedded in the meridian problem
\be
\nabla_{\text{axi}} \cdot \left[ \begin{array}{ccc}
\sigma_{r r}  &  0   & \sigma_{r z}  \\
0     &   \sigma_{\theta \theta}   &  0   \\
\sigma_{z r}    &   0   &   \sigma_{z z}
\end{array}  \right]
\ = \ \left[  \begin{array}{c}
2 \, \beta \\   0   
\end{array}  \right] \ \ \ \mbox{ in } \  \Omega \, .
\label{ytr12}
\ee
Lifting \eqref{ytr12} from $\Omega$ to $\breve{\Omega}$ we obtain an axisymmetric elasticity problem in $\breve{\Omega}$,
$ \nabla \cdot \sigtens \, = \, \bm{f}$ (see \eqref{eq:elasticity_strong}), with $\bm{f} \in \mbf{L}^{2}(\brOmega)$.
From \cite{gri921}, we have that, for $\breve{\Omega}$ a bounded polyhedral domain, 
$\sigtens \in \Htens^{1}(\brOmega, \mathbb{M}^{3})$.
Additionally, $ \| \sigtens \|_{\Htens^{1}(\brOmega, \mathbb{M}^{3})} \, \le \, C \, \| \bm{f} \|_{\mbf{L}^{2}(\breve{\Omega})} \, 
\le \, C \| \beta \|_{{}_{1}L^{2}(\Omega)}$.

Then, using Lemma \ref{redten} we have that 
\be
\sigma_{r r} , \, \sigma_{\theta \theta}  \in {}_{1}H^{1}(\Omega) , \
\sigma_{r z} \in {}_{1}V^{1}(\Omega) , \ \mbox{ and } \ (\sigma_{r r} - \sigma_{\theta \theta}) \in {}_{-1}L^{2}(\Omega) \,. 
\label{ytr15}
\ee

From \eqref{ytr115} and \eqref{ytr15}, 
\begin{align*}
\tau &= \ - \partial_{z}  \sigma_{\theta \theta}.  \  \
\mbox{As } \ \sigma_{\theta \theta}  \in {}_{1}H^{1}(\Omega), \ \mbox{ then } \ \tau \in  {}_{1}L^{2}(\Omega) \, . \\
\tau_{1 1} &= \ - \partial_{z} \sigma_{r r}.  \  \
\mbox{As } \ \sigma_{r r}  \in {}_{1}H^{1}(\Omega), \ \mbox{ then } \ \tau_{1 1} \in  {}_{1}L^{2}(\Omega) \, . \\
\tau_{2 2} &= \ \frac{1}{r}  \sigma_{r z}  \, + \, \partial_{r} \sigma_{r z}  \ .  \  \
\mbox{As } \ \sigma_{r z}  \in {}_{1}V^{1}(\Omega), \ \mbox{ then } \ \tau_{2 2} \in  {}_{1}L^{2}(\Omega) \, . 
\end{align*}

Also, it follows that
\be
  \| \tau \|_{ {}_{1}L^{2}(\Omega)} \ + \  \| \tau_{1 1} \|_{ {}_{1}L^{2}(\Omega)} 
   \ + \  \| \tau_{2 2} \|_{ {}_{1}L^{2}(\Omega)}  \ \le \ C \, \| \sigtens \|_{\Htens^{1}(\brOmega, \mathbb{M}^{3})}
   \ \le \ C \| \beta \|_{ {}_{1}L^{2}(\Omega)} \, .
\label{ytr16}
\ee

Next, from \eqref{ytr7} and \eqref{ytr15},
\begin{align}
\tau_{1 2} &= \ 2 \beta \ - \ \frac{1}{r} \int^{r} \partial_{z} (r \, \tau_{2 2}) \, dr 
\ = \ 2 \beta \ - \  \partial_{z} \left( \frac{1}{r} \int^{r} r \, \tau_{2 2} \, dr  \right)   \nonumber  \\
&= \ 2 \beta \ - \ \partial_{z} \sigma_{r z} \, .   \nonumber \\ 
\Rightarrow \ \tau_{1 2} &\in {}_{1}L^{2}(\Omega) \, , \ \ \mbox{ with } 
 \| \tau_{1 2} \|_{ {}_{1}L^{2}(\Omega)}  \ \le \ C \| \beta \|_{ {}_{1}L^{2}(\Omega)} \, .  \label{ytr17}
\end{align}

Also, from \eqref{ytr1} and \eqref{ytr17},
\be
\tau_{2 1}  \ = \ \tau_{1 2} \, - \, 2 \beta \ \in  {}_{1}L^{2}(\Omega) \, , \ \ \mbox{ with } 
 \| \tau_{2 1} \|_{ {}_{1}L^{2}(\Omega)}  \ \le \ C \| \beta \|_{ {}_{1}L^{2}(\Omega)} \, .  \label{ytr18}
\ee

Finally, we confirm that \eqref{ytr2} and \eqref{ytr3} are satisfied.
\begin{align*}
\partial_{r} \tau_{1 1} + \frac{1}{r} \tau_{1 1}   + \partial_{z} \tau_{1 2} - \frac{1}{r} \tau 
&= \ \frac{1}{r} \partial_{r} \big( r \, \tau_{1 1} \big) \, +  \,  \partial_{z} \tau_{1 2} \, -  \, \frac{1}{r} \tau  \\
&= \ \frac{1}{r} \partial_{r} \big( r \, \partial_{z} ( - \sigma_{r r} )  \big) 
\, +  \,  \partial_{z} \big( 2 \beta \, - \, \partial_{z} \sigma_{r z} \big)  \, -  \, \frac{1}{r} \big( - \partial_{z} \sigma_{\theta \theta} \big) \\
&= \ - \partial_{z} \big( \frac{1}{r} \partial_{r} (r \, \sigma_{r r}) \ + \ \partial_{z} \sigma_{r z} \ - \ \frac{1}{r} \sigma_{\theta \theta}
\ - \ 2 \beta \big) \\
&= \ - \partial_{z} \big( 0 ) \ = \ 0 \, \ \ \mbox{(from \eqref{ytr12}) } \, .
\end{align*}

Also,
\begin{align*}
\partial_{r} \tau_{2 1} + \frac{1}{r} \tau_{2 1}   + \partial_{z} \tau_{2 2} 
&= \ \frac{1}{r} \partial_{r} \big( r \, \tau_{2 1} \big) \, +  \,  \partial_{z} \tau_{1 2}   \\
&= \ \frac{1}{r} \partial_{r} \big( r \, ( - \partial_{z}   \sigma_{r z} )  \big) 
\, +  \,  \partial_{z} \big(  \frac{1}{r}  \partial_{r} ( r \sigma_{r z} ) \big)  \\
&= \  - \partial_{z}  \big( \frac{1}{r} \partial_{r}  ( r \,  \sigma_{r z} )  \big) 
 \ + \  \partial_{z} \big(  \frac{1}{r}  \partial_{r} ( r \sigma_{r z} ) \big) \ = \ 0 \, .
\end{align*}
This completes the proof.
\end{proof}

The following lemma established the inf-sup condition \eqref{eq:main_inf_sup}.
\begin{lemma}
\label{lem:b_operator_axi}
For any $\mbf{v} \in U$ and $p \in Q$, there exists a $C > 0$ and a $(\tautens, \tau) \in \bSigma$ such that
\begin{align}
\label{eq:B_operator}
b((\tautens,\tau), \mbf{v}) + c((\tautens,\tau) , p) &= \|\mbf{v} \|^{2}_{U} + \|p \|^{2}_{Q}  \, ,  \\
\mbox{with } \ \ 
\label{eq:tau_norm}
\| (\tautens,\tau) \|_{\bSigma} & \leq C ( \|\mbf{v} \|_{U} + \| p \|_{Q} ) \, .
\end{align}
\end{lemma}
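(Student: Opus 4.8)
The plan is to verify \eqref{eq:B_operator}--\eqref{eq:tau_norm} by superposition, following the two-step template announced before the statement: write the sought $(\tautens,\tau)$ as $(\tautens_{1},\tau_{1})+(\tautens_{2},\tau_{2})$, with $(\tautens_{1},\tau_{1})$ chosen so that $b((\tautens_{1},\tau_{1}),\mbf{v})=\|\mbf{v}\|_{U}^{2}$, and $(\tautens_{2},\tau_{2})$ chosen so that $\axidiv(\tautens_{2},\tau_{2})=\mbf{0}$ --- whence $b((\tautens_{2},\tau_{2}),\mbf{v})=0$ --- while $c((\tautens_{2},\tau_{2}),p)$ supplies exactly the residual $\|p\|_{Q}^{2}-c((\tautens_{1},\tau_{1}),p)$. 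The identity that makes this bookkeeping work is that, directly from \eqref{eq:del_rz} and \eqref{eq:axi_b_op}, $b((\tautens,\tau),\mbf{u})=(\axidiv(\tautens,\tau),\mbf{u})$, so that $b$ is an ${}_{1}\mbf{L}^{2}$-pairing of $\mbf{u}$ with the meridian divergence of $(\tautens,\tau)$.

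\textbf{Step 1 (the $b$-term).} Given $\mbf{v}\in U={}_{1}\mbf{L}^{2}(\Omega)$, lift it to the axisymmetric field $\breve{\mbf{v}}$ on $\brOmega$ with zero angular component, for which $\|\breve{\mbf{v}}\|_{\mbf{L}^{2}(\brOmega)}\leq C\|\mbf{v}\|_{U}$, and solve the three-dimensional elasticity system \eqref{eq:elasticity_strong}--\eqref{eq:elasticity_strong2} with body force $\breve{\mbf{v}}$. Exactly as in the proof of Lemma \ref{lem:consttau2}, the regularity result of \cite{gri921} for polyhedral domains yields a stress $\sigtens\in\Htens^{1}(\brOmega,\mathbb{M}^{3})$ with $\|\sigtens\|_{\Htens^{1}(\brOmega,\mathbb{M}^{3})}\leq C\|\breve{\mbf{v}}\|_{\mbf{L}^{2}(\brOmega)}\leq C\|\mbf{v}\|_{U}$, which is axisymmetric with vanishing azimuthal components since the data is; by Lemma \ref{redten} it corresponds to a meridian pair $(\tautens_{1},\tau_{1})\in\bSigma$ satisfying $\axidiv(\tautens_{1},\tau_{1})=\mbf{v}$ and $\|(\tautens_{1},\tau_{1})\|_{\bSigma}\leq C\|\mbf{v}\|_{U}$. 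Hence $b((\tautens_{1},\tau_{1}),\mbf{v})=(\mbf{v},\mbf{v})=\|\mbf{v}\|_{U}^{2}$.

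\textbf{Step 2 (the $c$-term).} Put $q_{1}:=\tfrac{1}{2}\big((\tautens_{1})_{12}-(\tautens_{1})_{21}\big)$, so $(\tautens_{1},\mathcal{S}^{2}(p))=2(q_{1},p)$; since $\axidiv(\tautens_{1},\tau_{1})=\mbf{v}$, the definition \eqref{eq:modified_c} gives $c((\tautens_{1},\tau_{1}),p)=2(q_{1},p)+(\mbf{v}\wedge\mbf{x},p)$. Define $\beta:=\tfrac{1}{2}p-q_{1}-\tfrac{1}{2}\,\mbf{v}\wedge\mbf{x}\in{}_{1}L^{2}(\Omega)$; because $\brOmega$ is bounded, $\|\mbf{v}\wedge\mbf{x}\|_{{}_{1}L^{2}(\Omega)}\leq C\|\mbf{v}\|_{U}$, so $\|\beta\|_{{}_{1}L^{2}(\Omega)}\leq C(\|\mbf{v}\|_{U}+\|p\|_{Q})$, and by construction $2(\beta,p)=\|p\|_{Q}^{2}-c((\tautens_{1},\tau_{1}),p)$. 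Applying Lemma \ref{lem:consttau2} to $\beta$ furnishes $(\tautens_{2},\tau_{2})\in\bSigma$ with $\axidiv(\tautens_{2},\tau_{2})=\mbf{0}$, $\tfrac{1}{2}\big((\tautens_{2})_{12}-(\tautens_{2})_{21}\big)=\beta$, and $\|(\tautens_{2},\tau_{2})\|_{\bSigma}\leq C\|\beta\|_{{}_{1}L^{2}(\Omega)}\leq C(\|\mbf{v}\|_{U}+\|p\|_{Q})$. Since the meridian divergence of $(\tautens_{2},\tau_{2})$ vanishes, $b((\tautens_{2},\tau_{2}),\mbf{v})=0$ and $c((\tautens_{2},\tau_{2}),p)=(\tautens_{2},\mathcal{S}^{2}(p))=2(\beta,p)=\|p\|_{Q}^{2}-c((\tautens_{1},\tau_{1}),p)$.

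\textbf{Conclusion; the main obstacle.} Setting $(\tautens,\tau):=(\tautens_{1},\tau_{1})+(\tautens_{2},\tau_{2})\in\bSigma$ and adding the two steps yields $b((\tautens,\tau),\mbf{v})+c((\tautens,\tau),p)=\|\mbf{v}\|_{U}^{2}+\|p\|_{Q}^{2}$, i.e.\ \eqref{eq:B_operator}, while the triangle inequality gives \eqref{eq:tau_norm}. The substantive part is Step 1 --- solving the axisymmetric divergence equation $\axidiv(\tautens_{1},\tau_{1})=\mbf{v}$ with a full $\bSigma$-bound --- and this is handled by the same three-dimensional elasticity lift, polyhedral regularity \cite{gri921}, and isomorphism of Lemma \ref{redten} already used in Lemma \ref{lem:consttau2}; once that is in place, the remainder, including the bookkeeping in Step 2 that makes the $c$-identity exact, is routine.
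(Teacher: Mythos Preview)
Your argument is correct and follows the same two-step superposition template as the paper, including the use of Lemma~\ref{lem:consttau2} in Step~2; the bookkeeping with $\beta$ is identical up to a sign convention.

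The one genuine difference is in Step~1. The paper does \emph{not} solve a single three-dimensional elasticity problem and invoke Lemma~\ref{redten}. Instead it treats each component $v_{1},v_{2}$ of $\mbf{v}$ separately: it lifts $v_{i}$ to $\brOmega$, solves a scalar Poisson problem $\Delta t_{i}=v_{i}$ there, sets $\breve{\mbf w}=\nabla t_{1}$, $\breve{\mbf z}=\nabla t_{2}$, and uses Lemma~\ref{redvec} to conclude $\mbf w,\mbf z\in{}_{1}\mbf{VH}^{1}(\Omega)$ with $\axidiv\mbf w=v_{1}$, $\axidiv\mbf z=v_{2}$. This yields $\tautens^{1}=\begin{pmatrix}\mbf w^{t}\\ \mbf z^{t}\end{pmatrix}$ with $\tau^{1}=0$, i.e.\ $(\tautens^{1},0)\in\boldsymbol{S}(\Omega)$. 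Your elasticity-based construction instead produces a pair $(\tautens_{1},\tau_{1})$ with $\tau_{1}=\sigma_{\theta\theta}$ generally nonzero, and a symmetric $\tautens_{1}$ (so in fact your $q_{1}=0$). Both are perfectly valid for the present lemma. The payoff of the paper's choice only appears later: Theorem~\ref{lem:thm_41} builds the discrete inf-sup by projecting this very $(\tautens^{1},0)$ through $\bPi_{h}$, whose domain is $\boldsymbol{S}+\boldsymbol{S}_{\Theta_{h}}$; the paper's row-by-row construction lands exactly in $\boldsymbol{S}$, whereas your elasticity lift does not obviously do so (the rows $(\sigma_{rr},\sigma_{rz})$ and $(\sigma_{zr},\sigma_{zz})$ of a meridian stress need not lie in ${}_{1}\mbf{VH}^{1}$ individually, since Lemma~\ref{redten} only gives $\sigma_{rr}\in{}_{1}H^{1}$, not ${}_{1}V^{1}$). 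So your route is fine here but would require modification before it could feed into the discrete argument.
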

\begin{proof}
Let $\mbf{v} = (v_{1}, v_{2})^{t} \in U$ and $p \in Q$ be given.  
Then, there exist vectors $\mbf{w}, \mbf{z} \in {}_{1}\mbf{VH}^{1}(\Omega)$ such that
\begin{align}
\axidiv \mbf{w}  = v_{1} \text{ and } \axidiv \mbf{z} = v_{2} \, 
\end{align}
where $\| \axidiv \mbf{w}\|_{{}_{1}L^{2}(\Omega)} + \| \mbf{w} \|_{{}_{1}\mbf{VH}^{1}(\Omega)}
\leq C \;  \| v_{1} \|_{{}_{1}L^{2}(\Omega)}$ and 
 $\| \axidiv \mbf{z} \|_{{}_{1}L^{2}(\Omega)} 
+ \| \mbf{z} \|_{{}_{1}\mbf{VH}^{1}(\Omega)} \leq C \; \| v_{2} \|_{{}_{1}L^{2}(\Omega)}$.  
To compute the vectors $\mbf{w}$ and $\mbf{z}$, one can map the axisymmetric scalar functions 
$v_{1}$ and $v_{2}$ into 3D Cartesian space and solve scalar Laplace equations to obtain functions $t_{1}$ and $t_{2}$.  
The gradient functions $\breve{\mbf{w}} = \nabla_{(x,y,z)} t_{1}$ and $\breve{\mbf{z}} = \nabla_{(x,y,z)} t_{2}$ are 
then computed.  
Finally, using Lemma \ref{redvec}, mapping
$\breve{\mbf{w}}$ and $\breve{\mbf{z}}$ from $\breve{\Omega}$ 
to $\Omega$, we obtain $\mbf{w}$ and $\mbf{z}$.

Using $\mbf{w}$ and $\mbf{z}$, we then construct a matrix $\tautens^{1}$, where
\begin{align}
\tautens^{1} = \begin{pmatrix} \mbf{w}^{t} \\ \mbf{z}^{t} \end{pmatrix}. 
\end{align}
Thus, taking $(\tautens^{1}, \tau^{1}) = (\tautens^{1}, 0)$  one has that
\begin{align}
\label{eq:tau_bound_with_u_1}
\axidiv (\tautens^{1}, \tau^{1})  = \mbf{v}, \; \; & \text{  hence  } \; \; b((\tautens^{1}, \tau^{1}), \mbf{v}) = \| \mbf{v} \|^{2}_{U}, \\
\mbox{and  }
\label{eq:tau_one_norm}
\| (\tautens^{1}, \tau^{1}) \|_{\bSigma} \leq \| (\tautens^{1}, 0) \|_{\boldsymbol{S}} 
& \leq C \| \mbf{v} \|_{U} \leq C ( \| \mbf{v} \|_{U} + \| p \|_{Q} ).
\end{align}

To build $(\tautens^{2}, \tau^{2}) \in \bSigma$, we first choose $\theta, \gamma \in {}_{1}L^{2}(\Omega)$ such that
\begin{align}
\label{eq:theta_gamma_defs}
\mathcal{S}^{2} (\theta) = as(\tautens^{1}), \quad \text{and} \; \gamma = \dfrac{1}{2} (v_{1} z - v_{2} r) 
= \dfrac{1}{2} (\axidiv \tautens^{1} \wedge \mbf{x}).
\end{align}

Next, set $\beta = (\gamma + \theta - \frac{1}{2} p ) \in {}_{1}L^{2}(\Omega)$.  Note that
\begin{align*}
\begin{split}
\| \theta \|_{{}_{1}L^{2}(\Omega)}^{2} &= \int_{\Omega} (\tau_{12} - \tau_{21})^{2} \; r \; d \Omega \leq 2 \int_{\Omega} (\tau_{11}^{2} + \tau_{12}^{2} + \tau_{21}^{2} + \tau_{22}^{2}) \; r \; d \Omega \\
& \leq 2 \| (\tautens^{1}, \tau^{1}) \|_{\bSigma} \leq C ( \| \mbf{v} \|_{U}^{2} + \| p \|_{Q}^{2} )
\end{split}
\end{align*}

Also, 
\begin{align*}
\| \gamma \|^{2}_{{}_{1}L^{2}(\Omega)} &= \int_{\Omega} \dfrac{1}{4} ( v_{1} z - v_{2} r)^{2} \; r \; d \Omega \leq \frac{1}{2} \int_{\Omega} (v_{1}^{2} z^{2} + v_{2}^{2} r^{2} ) \; r \; d \Omega \\
& \leq C \int_{\Omega} \mbf{v} \cdot \mbf{v} \; r \; d \Omega \ = \ C \| \mbf{v} \|^{2}_{U}  \, .
\end{align*}

Therefore, 
\begin{align}
\label{bd4beta}
\begin{split}
\| \beta \|_{{}_{1}L^{2}(\Omega)} &\leq C ( \| p \|_{Q} +  \| \theta \|_{{}_{1}L^{2}(\Omega)} + \| \gamma \|_{{}_{1}L^{2}(\Omega)} ) 
\ \leq  \ C ( \| \mbf{v} \|_{U} + \| p \|_{Q} ).
\end{split}
\end{align}

Next, $(\tautens^{2}, \tau^{2}) \in \bSigma$ is constructed using Lemma \ref{lem:consttau2}, with 
$\beta \, \rightarrow \, - \beta$.  For such a $(\tautens^{2}, \tau^{2})$,
it follows that 
\begin{align}
\label{eq:tautens2_axidiv_zero}
\axidiv(\tautens^{2}, \tau^{2})  = \mbf{0} , \; \; & \text{  hence  } \; \; b((\tautens^{2}, \tau^{2}), \mbf{v}) = 0  \, , \\
\label{eq:as_tautens_2}
\text{as}(\tautens^{2})  \ = \ \begin{pmatrix} 0 & - \beta \\ \beta & 0 \end{pmatrix} \, , \ \ & \quad
\left( \text{as}(\tautens^{2}) \, , \, \mathcal{S}^{2}(p) \right) \ = \  \left( \mathcal{S}^{2}(- \beta) \, , \, \mathcal{S}^{2}(p) \right)
 \, , \\
\mbox{and,   } \ \ 
\label{eq:tau_two_norm}
\| (\tautens^{2}, \tau^{2}) \|_{\bSigma} 
& \leq C \| \beta \|_{{}_{1}L^{2}(\Omega)} \leq C ( \| \mbf{v} \|_{U} + \| p \|_{Q} ) \, . 
\end{align}

As a result, for $(\tautens, \tau) = (\tautens^{1}, \tau^{1}) + (\tautens^{2}, \tau^{2})$ we have using \eqref{eq:tau_bound_with_u_1} and \eqref{eq:tautens2_axidiv_zero} 
\begin{align}
b((\tautens, \tau), \mbf{v}) = b((\tautens^{1}, \tau^{1}), \mbf{v}) + b((\tautens^{2}, \tau^{2}), \mbf{v}) = \| \mbf{v} \|^{2}_{U},
\label{eqbdds1}
\end{align}
and
\begin{align}
c((\tautens, \tau), p) &= c((\tautens^{1}, \tau^{1}), p) + c((\tautens^{2}, \tau^{2}), p)  \nonumber \\
&= (\tautens^{1}, \mathcal{S}^{2}(p))_{} + (\axidiv (\tautens^{1}, \tau^{1}) \wedge \mbf{x}, p)_{}  
 + (\tautens^{2}, \mathcal{S}^{2}(p))_{} + (\axidiv (\tautens^{2}, \tau^{2}) \wedge \mbf{x}, p)_{}  \nonumber \\
&= (\tautens^{1}, \mathcal{S}^{2}(p))_{} + (\axidiv \tautens^{1} \wedge \mbf{x}, p)_{} +(\tautens^{2}, \mathcal{S}^{2}(p))_{}, \; \; (\text{using } \axidiv (\tautens^{2}, \tau^{2}) = \mbf{0}) \nonumber \\
&= (as(\tautens^{1}), \mathcal{S}^{2}(p)) + 2 (\gamma, p) + (as(\tautens^{2}), \mathcal{S}^{2}(p)), \; \; (\text{using \eqref{eq:theta_gamma_defs}})  \nonumber \\
&= (\mathcal{S}^{2}(\theta), \mathcal{S}^{2}(p)) + (\mathcal{S}^{2}(\gamma), \mathcal{S}^{2}(p))
 + (\mathcal{S}^{2} (-\beta), \mathcal{S}^{2}(p))_{}, \; \; (\text{using \eqref{eq:as_tautens_2}})  \nonumber \\
& = (\mathcal{S}^{2}(\frac{1}{2} p), \mathcal{S}^{2}(p))  \nonumber \\
& = \| p \|_{Q}^{2}.   \label{eq:c_operator_norm_p}
\end{align}
Thus, from \eqref{eqbdds1} and \eqref{eq:c_operator_norm_p}, 
$(\tautens, \tau)$ satisfies \eqref{eq:B_operator}, and using \eqref{eq:tau_one_norm} and \eqref{eq:tau_two_norm},
\begin{align}
\| (\tautens, \tau) \|_{\bSigma} &\leq \| (\tautens^{1}, \tau^{1}) \|_{\bSigma} + \| ( \tautens^{2}, \tau^{2} ) \|_{\bSigma} 
\ \leq \ C ( \| \mbf{v} \|_{Q} + \| p \|_{Q} ).
\end{align}
\end{proof}

 \setcounter{equation}{0}
\setcounter{figure}{0}
\setcounter{table}{0}
\setcounter{theorem}{0}
\setcounter{lemma}{0}
\setcounter{corollary}{0}
\setcounter{definition}{0}
\section{Discrete Axisymmetric Variational Formulation}
\label{sec:discrete_axisymmetric_problem}
In this section we present the setting for the approximation of \eqref{eq:axi_saddle_point_1_mod}-\eqref{eq:axi_saddle_point_3_mod}.
We begin by introducing the approximation spaces used:
\begin{align}
\label{eq:space_1}
\bSigma_{h} := \Sigma_{h,\sigtens} \times \Sigma_{h,\sigma} = \{ (\sigtens_{h}, \sigma_{h}) : \sigtens_{h} \in \Sigma_{h, \sigtens} , \sigma_{h} \in \Sigma_{h, \sigma} \} \subset \bSigma   \, , \ \ 
U_{h} \subset U  ,  \ \mbox{ and } \ Q_{h}  \subset Q  \, .
\end{align}


We assume that there exists a piecewise polynomial space $(\Theta_{h})^{2}$ such that $( (\Theta_{h})^{2}, Q_{h})$ is a stable axisymmetric Stokes pair. Additionally we assume that the solution, $\mbf{w}_{h} = (w_{h 1} \, , \, w_{h 2})^{t} \in (\Theta_{h})^{2}$
 to the modified discrete axisymmetric Stokes problem: 
\textit{Given $\beta \in {}_{1}L^{2}(\Omega)$, determine $(\mbf{w}_{h} , p_{h})  \in ( (\Theta_{h})^{2}, Q_{h})$, such that
for all $(\mbf{v}_{h} , q_{h})  \in ( (\Theta_{h})^{2}, Q_{h})$}
\begin{align}
\label{eq:stokes_requirement_0}
(\Grad \mbf{w}_{h} \, : \, \Grad \mbf{v}_{h} ) \ + \ ( \frac{1}{r} w_{h 1} \, , \,  \frac{1}{r} v_{h 1} ) \ + \ 
(p_{h} , \axidiv \mbf{v}_{h}) &+ \ 
\sum_{T \in \mathcal{T}_{h}}  \left( r \dfrac{\partial^{2} w_{h  2} }{\partial z^{2}} , r \dfrac{\partial^{2} v_{h  2} }{\partial z^{2}} \right)_{T}
  = 0 , \\
\label{eq:stokes_requirement_1}
(\axidiv \mbf{w}_{h}, q_{h}) &= (\beta, q_{h}),  
\end{align}
satisfies
\begin{align}
\label{eq:stokes_requirement_2}
 \| \mbf{w}_{h} \|_{{}_{1}\mbf{VH}^{1}(\Omega)} \ + \ 
 \left(\sum_{T \in \mathcal{T}_{h}} \left\| r \dfrac{\partial^{2} w_{h  2} }{\partial z^{2}} \right\|^{2}_{{}_{1}L^{2}(T)} \right)^{\frac{1}{2}} 
&\leq C \; \| \beta \|_{{}_{1}L^{2}(\Omega)} \, .
\end{align}

\textbf{Remark}: The discrete space $(\Theta_{h})^{2}$ is a subspace of ${}_{1}\mbf{VH}^{1}(\Omega)$.

The discrete axisymmetric meridan problem with weak symmetry is then: 
\textit{Given $\mbf{f} \in {}_{1}\mbf{L}^{2}(\Omega)$ find $((\sigtens_{h}, \sigma_{h}), \mbf{w}_{h}, p_{h}) \in \bSigma_{h} \times U_{h} \times Q_{h}$ such that for all $((\tautens_{h}, \tau_{h}), \mbf{v}_{h}, q_{h}) \in \bSigma_{h} \times U_{h} \times Q_{h}$}
\begin{align}
\label{eq:axi_saddle_point_1_mod_discrete}
a((\sigtens_{h},\sigma_{h}),(\tautens_{h},\tau_{h})) + b((\tautens_{h},\tau_{h}), \mbf{w}_{h} ) + c((\tautens_{h},\tau_{h}), p_{h} ) &= (\mbf{f}, \nabla_{\text{axi}} \cdot (\tautens_{h}, \tau_{h}) ) \\
\label{eq:axi_saddle_point_2_mod_discrete}
b((\sigtens_{h},\sigma_{h}), \mbf{v}_{h} )   &= (\mbf{f},\mbf{v}_{h})  \\
\label{eq:axi_saddle_point_3_mod_discrete}
c((\sigtens_{h},\sigma_{h}), q_{h} ) & = (\mbf{f} \wedge \mbf{x}, q_{h})  \, .
\end{align}

\subsection{Well posedness of the discrete variational formulation 
\eqref{eq:axi_saddle_point_1_mod_discrete}-\eqref{eq:axi_saddle_point_3_mod_discrete}}
\label{sec:discrete_axi_inf_sup}
Analogous to the continuous formulation, the well posedness of 
\eqref{eq:axi_saddle_point_1_mod_discrete}-\eqref{eq:axi_saddle_point_3_mod_discrete} relies on the 
boundedness and coercivity of $a(\cdot , \cdot)$ on $\bSigma_{h} \times \bSigma_{h}$ and 
that $\bSigma_{h} \times (U_{h} \times Q_{h})$ satisfy the
inf-sup condition
\begin{align}
\label{eq:discrete_inf_sup_condition}
\inf_{\mbf{v}_{h} \in U_{h}, p_{h} \in Q_{h}} \sup_{ (\tautens_{h}, \tau_{h}) \in \bSigma_{h}} 
\dfrac{b( (\tautens_{h}, \tau_{h}),\mbf{v}_{h}) + c( (\tautens_{h}, \tau_{h}),p_{h})}{\|  (\tautens, \tau) \|_{\bSigma} ( \| \mbf{v}_{h} \|_{U} + \| p_{h} \|_{Q} )} \geq C.
\end{align}

To establish \eqref{eq:discrete_inf_sup_condition} we use Fortin's Lemma \cite{BBF}.  
Given $\mbf{u}_{h} \in U_{h} \subset U$, $p_{h} \in Q_{h} \subset Q$ we determine, as in the proof of 
Lemma \ref{lem:b_operator_axi}, a $(\tautens, \tau) = (\tautens^{1}, \tau^{1}) + (\tautens^{2}, \tau^{2})$ such 
that the continuous inf-sup condition is satisfied.  Then, using a suitably defined projection  
(see \eqref{eq:rz_projection_1}-\eqref{eq:rz_projection_3}), we obtain $(\tautens_{h}, \tau_{h}) \in \bSigma_{h}$ 
satisfying \eqref{eq:discrete_inf_sup_condition}.

Helpful in this discussion is to define the restriction of the operators $b(\cdot, \cdot)$ and $c(\cdot,\cdot)$ to $T \in \mathcal{T}_{h}$:
\begin{align}
b((\tautens, \tau), \mbf{u})_{T} &= (\axidiv \tautens, \mbf{u})_{T} - (\dfrac{\tau}{r}, u_{r})_{T}   \label{bres2R} \\
c((\tautens, \tau), p)_{T} &= (as(\tautens), \mathcal{S}^{2}(p))_{T} + ((\axidiv (\tautens, \tau)) \wedge \mbf{x}, p)_{T}.
   \label{cres2T}
\end{align}

Next, we present the following identity for the operator $c(\cdot, \cdot)_{T}$.
\begin{lemma}
\label{lem:c_operator_result}
For $T \in \mathcal{T}_{h}$, 
\begin{align}
\label{eq:exterior_result_5}
c((\tautens, \tau),p)_{T} = \int_{\partial T} (\tautens \cdot \mbf{n}) \cdot \mbf{x}^{\perp} \; p \; r \;ds - \int_{T} \tautens : (\mbf{x}^{\perp} \otimes \nabla p) \; r \; dT - \int_{T} \tau \; z \; p \; dT.
\end{align}
\end{lemma}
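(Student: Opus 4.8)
The plan is to unfold the definition \eqref{cres2T} of $c(\cdot,\cdot)_T$, namely $c((\tautens,\tau),p)_T = (as(\tautens),\mathcal{S}^{2}(p))_T + ((\axidiv(\tautens,\tau))\wedge\mbf{x},p)_T$, and to reduce the identity to a single integration by parts over the element $T$. Using the pointwise relation $(\axidiv(\tautens,\tau))\wedge\mbf{x} = (\axidiv(\tautens,\tau))\cdot\mbf{x}^{\perp}$ recorded just above \eqref{eq:expanded_c_star}, the second term becomes $\int_T \axidiv(\tautens,\tau)\cdot(\mbf{x}^{\perp}p)\, r\, dr\, dz$, i.e. a weighted divergence integral with test vector $\mbf{w} = \mbf{x}^{\perp}p = (zp,-rp)^{t}$. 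So everything comes down to an axisymmetric Green's formula on $T$, followed by an elementary pointwise tensor identity to absorb the $(as(\tautens),\mathcal{S}^{2}(p))_T$ term.

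For the integration by parts I would first note, for a generic $\mbf{w} = (w_r,w_z)^{t}$, that multiplying the definition of $\axidiv$ on rows by $r$ and folding the $\tfrac1r$ terms into $\partial_r(r\,\cdot)$ gives
\[
r\,\axidiv(\tautens,\tau)\cdot\mbf{w} = w_r\bigl(\partial_r(r\tau_{11}) + \partial_z(r\tau_{12}) - \tau\bigr) + w_z\bigl(\partial_r(r\tau_{21}) + \partial_z(r\tau_{22})\bigr).
\]
Applying the ordinary two-dimensional divergence theorem on $T$ term by term then produces the boundary term $\int_{\partial T} r\,(\tautens\cdot\mbf{n})\cdot\mbf{w}\, ds$, the volume term $-\int_T r\,\tautens:\nabla\mbf{w}\, dr\, dz$ with $\nabla\mbf{w}$ the $2\times2$ Cartesian gradient of \eqref{eq:gradient_operators}, and the leftover $-\int_T \tau\, w_r\, dr\, dz$. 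Specializing to $\mbf{w} = \mbf{x}^{\perp}p$, so $w_r = zp$, the boundary term is exactly $\int_{\partial T}(\tautens\cdot\mbf{n})\cdot\mbf{x}^{\perp}\,p\,r\,ds$ and the leftover term is exactly $-\int_T \tau\,z\,p\,dT$, the cancellation of the $\tfrac1r$ against $r$ being what strips the weight from this last integral.

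It then remains to match $(as(\tautens),\mathcal{S}^{2}(p))_T - \int_T r\,\tautens:\nabla(\mbf{x}^{\perp}p)\, dr\, dz$ against $-\int_T r\,\tautens:(\mbf{x}^{\perp}\otimes\nabla p)\, dr\, dz$. The key observation is the pointwise identity
\[
\nabla(\mbf{x}^{\perp}p) = \mbf{x}^{\perp}\otimes\nabla p + \mathcal{S}^{2}(p),
\]
obtained by differentiating $(zp,-rp)^{t}$ entrywise and comparing with the definitions of $\otimes$ and $\mathcal{S}^{2}$. Contracting with $\tautens$, integrating against $r\,dr\,dz$, and using that $\mathcal{S}^{2}(p)$ is skew-symmetric (hence $\tautens:\mathcal{S}^{2}(p) = as(\tautens):\mathcal{S}^{2}(p)$), one gets $\int_T r\,\tautens:\nabla(\mbf{x}^{\perp}p)\,dr\,dz = \int_T r\,\tautens:(\mbf{x}^{\perp}\otimes\nabla p)\,dr\,dz + (as(\tautens),\mathcal{S}^{2}(p))_T$; substituting this back cancels the $(as(\tautens),\mathcal{S}^{2}(p))_T$ contribution and yields \eqref{eq:exterior_result_5}.

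I expect the only genuine obstacle to be the weight bookkeeping: the extra $\tfrac1r$ terms inside $\axidiv$ must be absorbed before integrating by parts, and one has to be attentive to which integrals carry the factor $r$ and which (the $\tau z p$ term) do not. The identity is stated elementwise for functions regular enough that the divergence theorem applies, so no density argument is needed here; for the rougher functions in $\bSigma$ one would pass to the limit, which is routine.
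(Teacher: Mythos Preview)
Your argument is correct and is essentially the same as the paper's: a single integration by parts on $T$ together with a pointwise product rule that generates the skew-symmetric term. The only cosmetic difference is the order of operations---the paper first establishes the identity $\axidiv(\tautens\wedge\mbf{x}) = (\axidiv\tautens)\wedge\mbf{x} + \tautens:\mathcal{S}^{2}(1)$ and then integrates $\int_T \axidiv(\tautens\wedge\mbf{x})\,p\,r\,dT$ by parts, whereas you integrate $\int_T \axidiv(\tautens,\tau)\cdot(\mbf{x}^{\perp}p)\,r\,dT$ by parts first and then invoke the dual product rule $\nabla(\mbf{x}^{\perp}p)=\mbf{x}^{\perp}\otimes\nabla p+\mathcal{S}^{2}(p)$; these are adjoint formulations of the same computation.
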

\begin{proof}
See Section \ref{apxpLm1} in the appendix.
\end{proof}
\begin{theorem}
\label{lem:thm_41}
Assume $\bSigma_{h}, U_{h}, Q_{h}$ satisfy \eqref{eq:space_1}.  Let  \vspace{0.5em} \\
$\boldsymbol{S}_{\Theta_{h}} = 
\left\{ (\tautens , \tau) \, : \, \tautens \, = \, \begin{pmatrix} \dfrac{\partial w_{h1} }{\partial z} &
 -\dfrac{1}{r} \dfrac{\partial}{\partial r}{(r \; w_{h1})} - \dfrac{\partial w_{2h}}{\partial z} \\ 0 & 0 \end{pmatrix} \, , \
\tau \, = \, r \; \dfrac{\partial^{2} \;  w_{h2} }{\partial z^{2}} ; \ \mbf{w}_{h} = (w_{h 1} , w_{h 2})^{t} \in \left( \Theta_{h} \right)^{2} \right\}$~. 
\linebreak[4]
If there exists a mapping $\bPi_{h} = \Pi_{h} \times \pi_{h} : (\boldsymbol{S} + \boldsymbol{S}_{\Theta_{h}} ) \rightarrow \bSigma_{h}$ such that : 
\begin{align}
\label{eq:projection_bound_1}
 \| \Pi_{h} \times \pi_{h} (\tautens , \tau) \|_{\bSigma} &\leq \ C \, \| (\tautens , \tau) \|_{\boldsymbol{S}}, \quad 
  \forall (\tautens , \tau) \in \boldsymbol{S},   \\
\label{eq:projection_bound_2}
 \| \Pi_{h} \times \pi_{h} (\tautens , \tau) \|_{\bSigma} &\leq \ C \, \| \mbf{w}_{h} \|_{{}_{1}\mbf{VH}^{1}(\Omega)} 
 \ + \ 
 \left(\sum_{T \in \mathcal{T}_{h}} \left\| r \dfrac{\partial^{2} w_{h  2} }{\partial z^{2}} \right\|^{2}_{{}_{1}L^{2}(T)} \right)^{\frac{1}{2}} , \quad
 \forall (\tautens , \tau) \in \boldsymbol{S}_{\Theta_{h}} , 
\end{align}
and  for all $T \in \mathcal{T}_{h}$
\begin{align}
\label{eq:rz_projection_1}
& \int_{T} (\tautens - \Pi_{h} \tautens) : ( \nabla \mbf{u}_{h} + \mbf{x}^{\perp} \otimes \nabla q_{h} ) \; r \; dT \ = \ 0 , \quad \forall  \mbf{u}_{h} \in U_{h},  \; \forall \; q_{h} \in Q_{h}, \\
\label{eq:rz_projection_2}
& \int_{\ell}  ((\tautens - \Pi_{h} \tautens) \cdot \mbf{n}_{K}) \cdot (\mbf{u}_{h} + \mbf{x}^{\perp} q_{h} ) \; r \; ds \ =  \ 0 , \quad \forall \text{ edges } \ell, \; \forall \; \mbf{u}_{h} \in U_{h}, \; \forall \; q_{h} \in Q_{h}, \\
\label{eq:rz_projection_3}
& \int_{T} \frac{1}{r} (\tau - \pi_{h} \tau) \; \sigma \; r \; dT \ = \ 0 , \quad \forall \; \sigma \in \{ z \, q_{h} \, :  \, q_{h} \in Q_{h} \} \cup
\{ v_{h 1} \, : \, ( v_{h 1} \, , \, 0 ) \in U_{h} \} \, , 
\end{align}
then $\bSigma_{h} \times \left( U_{h} \times Q_{h} \right)$ are inf-sup stable.
\end{theorem}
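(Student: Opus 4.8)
The plan is to establish the discrete inf-sup condition \eqref{eq:discrete_inf_sup_condition} by Fortin's Lemma, mimicking the continuous construction of Lemma \ref{lem:b_operator_axi} and then mapping the resulting $(\tautens,\tau)$ into $\bSigma_h$ with the hypothesized projection $\bPi_h$. Boundedness and coercivity of $a(\cdot,\cdot)$ on $\bSigma_h\times\bSigma_h$ are inherited directly from Lemmas \ref{lem:a_bounded} and \ref{lem:a_coercive} since $\bSigma_h\subset\bSigma$, so only the inf-sup inequality needs proof.

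Fix $\mathbf{v}_h=(v_{h1},v_{h2})^{t}\in U_h$ and $p_h\in Q_h$. First I would build $(\tautens^{1},\tau^{1})=(\tautens^{1},0)\in\boldsymbol{S}$ exactly as in Lemma \ref{lem:b_operator_axi}: choose $\mathbf{w},\mathbf{z}\in{}_{1}\mathbf{VH}^{1}(\Omega)$ with $\axidiv\mathbf{w}=v_{h1}$, $\axidiv\mathbf{z}=v_{h2}$ (lift to $\brOmega$, solve Laplace problems, apply Lemma \ref{redvec}), and let $\tautens^{1}$ be the matrix with rows $\mathbf{w}^{t}$ and $\mathbf{z}^{t}$, so that $b((\tautens^{1},\tau^{1}),\mathbf{v}_h)=\|\mathbf{v}_h\|_U^{2}$ and $\|(\tautens^{1},\tau^{1})\|_{\boldsymbol{S}}\le C\|\mathbf{v}_h\|_U$. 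Next, with $\theta,\gamma$ as in \eqref{eq:theta_gamma_defs} and $\beta:=2(\gamma+\theta-\tfrac12 p_h)\in{}_{1}L^{2}(\Omega)$, I would solve the stabilized discrete Stokes problem \eqref{eq:stokes_requirement_0}--\eqref{eq:stokes_requirement_1} with right-hand side $\beta$ to obtain $\mathbf{w}_h\in(\Theta_h)^{2}$, and define $(\tautens^{2},\tau^{2})\in\boldsymbol{S}_{\Theta_h}$ by the formula in the statement. The space $\boldsymbol{S}_{\Theta_h}$ is set up so that $\axidiv(\tautens^{2},\tau^{2})=\mathbf{0}$ on each element (hence $b((\tautens^{2},\tau^{2}),\cdot)=0$) while $\text{as}(\tautens^{2})=\mathcal{S}^{2}(-\tfrac12\axidiv\mathbf{w}_h)$, so \eqref{eq:stokes_requirement_1} gives $(\text{as}(\tautens^{2}),\mathcal{S}^{2}(p_h))=-(\beta,p_h)$; combining this with the value of $c((\tautens^{1},\tau^{1}),p_h)$ exactly as in the chain of equalities culminating in \eqref{eq:c_operator_norm_p} shows $c((\tautens^{1},\tau^{1}),p_h)+c((\tautens^{2},\tau^{2}),p_h)=\|p_h\|_Q^{2}$. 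Finally, \eqref{eq:stokes_requirement_2} together with \eqref{bd4beta} gives $\|\mathbf{w}_h\|_{{}_{1}\mathbf{VH}^{1}(\Omega)}+\big(\sum_{T}\|r\,\partial_{z}^{2}w_{h2}\|^{2}_{{}_{1}L^{2}(T)}\big)^{1/2}\le C\|\beta\|_{{}_{1}L^{2}(\Omega)}\le C(\|\mathbf{v}_h\|_U+\|p_h\|_Q)$.

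Set $(\tautens_h,\tau_h):=\bPi_h(\tautens^{1},\tau^{1})+\bPi_h(\tautens^{2},\tau^{2})\in\bSigma_h$, which is legitimate because $(\tautens^{1},\tau^{1})\in\boldsymbol{S}$ and $(\tautens^{2},\tau^{2})\in\boldsymbol{S}_{\Theta_h}$, the domain of $\bPi_h$. The bound $\|(\tautens_h,\tau_h)\|_{\bSigma}\le C(\|\mathbf{v}_h\|_U+\|p_h\|_Q)$ follows from \eqref{eq:projection_bound_1} for the first term and from \eqref{eq:projection_bound_2} combined with the Stokes estimate above for the second. The crux is the commuting property: for all $\mathbf{u}_h\in U_h$, $q_h\in Q_h$ one has $b((\tautens_h,\tau_h),\mathbf{u}_h)=b((\tautens,\tau),\mathbf{u}_h)$ and $c((\tautens_h,\tau_h),q_h)=c((\tautens,\tau),q_h)$, where $(\tautens,\tau)=(\tautens^{1},\tau^{1})+(\tautens^{2},\tau^{2})$. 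I would prove this by writing $b(\cdot,\cdot)_T$ and $c(\cdot,\cdot)_T$ through \eqref{bres2R}, \eqref{cres2T} and the integration-by-parts form of $c$ in Lemma \ref{lem:c_operator_result}, subtracting, and summing over $T\in\mathcal{T}_h$: the volume contributions have the form $\int_{T}(\tautens-\Pi_h\tautens):(\nabla\mathbf{u}_h+\mathbf{x}^{\perp}\otimes\nabla q_h)\,r\,dT$ and vanish by \eqref{eq:rz_projection_1}; the inter-element and boundary contributions have the form $\int_{\ell}((\tautens-\Pi_h\tautens)\cdot\mathbf{n})\cdot(\mathbf{u}_h+\mathbf{x}^{\perp}q_h)\,r\,ds$ and vanish by \eqref{eq:rz_projection_2} (summed over edges, using continuity of the normal trace of $\Pi_h\tautens\in\bSigma_h$); and the scalar remainder $\int_{T}\tfrac1r(\tau-\pi_h\tau)\,\sigma\,r\,dT$ with $\sigma\in\{z\,q_h\}\cup\{v_{h1}:(v_{h1},0)\in U_h\}$ vanishes by \eqref{eq:rz_projection_3}. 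Therefore $b((\tautens_h,\tau_h),\mathbf{v}_h)+c((\tautens_h,\tau_h),p_h)=b((\tautens,\tau),\mathbf{v}_h)+c((\tautens,\tau),p_h)=\|\mathbf{v}_h\|_U^{2}+\|p_h\|_Q^{2}$, which with the norm bound yields \eqref{eq:discrete_inf_sup_condition}; together with boundedness and coercivity of $a(\cdot,\cdot)$, this proves inf-sup stability of $\bSigma_h\times(U_h\times Q_h)$.

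The step I expect to be the main obstacle is the commuting argument: one must organize the element-by-element integration by parts so that the projection errors $\tautens-\Pi_h\tautens$ and $\tau-\pi_h\tau$ are tested only against the admissible quantities allowed by \eqref{eq:rz_projection_1}--\eqref{eq:rz_projection_3}, and in particular confirm that the edge terms genuinely telescope to zero --- this uses the normal-continuity of functions in $\bSigma_h$ and is intertwined with the substitution $\mathbf{w}=\mathbf{u}-\mathbf{x}^{\perp}p$ that was absorbed into $b(\cdot,\cdot)$ and $c(\cdot,\cdot)$ in \eqref{eq:mod_b_term_with_w}--\eqref{eq:modified_c}. A secondary technical point is verifying that the discrete Stokes solve really reproduces the identities $\axidiv(\tautens^{2},\tau^{2})=\mathbf{0}$ and $(\text{as}(\tautens^{2}),\mathcal{S}^{2}(p_h))=-(\beta,p_h)$ needed to make $c$ collapse to $\|p_h\|_Q^{2}$; this is precisely why the space $\boldsymbol{S}_{\Theta_h}$ and the stabilization term in \eqref{eq:stokes_requirement_0} are set up as they are.
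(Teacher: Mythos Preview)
Your proposal is correct and follows essentially the same approach as the paper's proof: a Fortin-type argument with the two pieces $(\tautens^{1},0)\in\boldsymbol{S}$ and $(\tautens^{2},\tau^{2})\in\boldsymbol{S}_{\Theta_{h}}$ (the latter coming from the stabilized discrete Stokes solve), followed by projection via $\bPi_{h}$ and verification of the commuting identities using \eqref{eq:rz_projection_1}--\eqref{eq:rz_projection_3} together with the integration-by-parts form of $c(\cdot,\cdot)_{T}$ from Lemma~\ref{lem:c_operator_result}. The only organizational difference is that the paper pre-projects the scalar component, writing $\tau_{h}^{2}=-2\,\pi_{h}\big(r\,\partial_{z}^{2}w_{h2}\big)$ and then applying $\Pi_{h}$ only to the tensor part, whereas you apply $\bPi_{h}=\Pi_{h}\times\pi_{h}$ to the unprojected pair; since $\pi_{h}$ is idempotent on $\Sigma_{h,\sigma}$ these are equivalent, and your remark that the edge contributions vanish edge-by-edge via \eqref{eq:rz_projection_2} (rather than by a telescoping cancellation) is the right mechanism.
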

\begin{proof}
The approach to this proof is similar to that used in \cite{BBF1}.  Let $\mbf{v}_{h} = (v_{h1}, v_{h2})^{t} \in U_{h} \subset {}_{1}\mbf{L}^{2}(\Omega, \mathbb{R}^{2})$ and $p_{h} \in Q_{h} \subset {}_{1}L^{2}(\Omega)$.   
 
Recall from Lemma \ref{lem:b_operator_axi} that for $\mbf{v}_{h}$ given, there exists 
$(\tautens^{1} , 0) \in \boldsymbol{S}(\Omega) \subset \bSigma(\Omega)$ such that
\begin{align}
\label{eq:tau_bound_with_u}
\axidiv (\tautens^{1}, 0)  = \mbf{v}_{h} \quad \text{ and } \quad \| (\tautens^{1},0) \|_{\bS} \leq C \left( \| \mbf{v}_{h} \|_{U} + \| p_{h} \|_{Q} \right).
\end{align}
Also, from \eqref{eq:tau_bound_with_u_1},
 $b((\tautens^{1}, 0),\mbf{v}_{h}) = (\mbf{v}_{h}, \mbf{v}_{h})$, hence
\begin{align}
\begin{split}
\label{eq:b_operator_thm41}
b((\tautens^{1}, 0) - \boldsymbol{\Pi}_{h}(\tautens^{1}, 0), \mbf{v}_{h}) &= 
b((\tautens^{1} - \Pi_{h} \tautens^{1}, 0) , \mbf{v}_{h} ) \\ &= \sum\limits_{T \in \mathcal{T}_{h} }(\axidiv (\tautens^{1} - \Pi_{h} \tautens^{1}), \mbf{v}_{h})_{T} + (\dfrac{0}{r},  v_{h 1})_{T} \\
 &= 0 \ \ \mbox{(using \eqref{eq:rz_projection_1}-\eqref{eq:rz_projection_2} with $q_{h} = 0$)}. 
\end{split}
\end{align}
Furthermore, from \eqref{eq:projection_bound_1} and \eqref{eq:tau_bound_with_u}, 
\begin{align}
\| \boldsymbol{\Pi_{h}} (\tautens^{1}, 0) \|_{\bSigma} \leq C \| (\tautens^{1}, 0) \|_{\bS}  \leq C (\|\mbf{v}_{h} \|_{U} + \| p_{h} \|_{Q}).
\end{align}
Next, combining \eqref{eq:exterior_result_5} with \eqref{eq:rz_projection_1}-\eqref{eq:rz_projection_2} (with $\mbf{u}_{h} = 0$),  
\begin{align}
\label{eq:c_operator_thm41}
\begin{split}
& c((\tautens^{1}, 0) - \boldsymbol{\mbf{\Pi}_{h}}(\tautens^{1}, 0), p_{h}) \ =  \ \sum_{T \in \mathcal{T}_{h}}
c((\tautens^{1} - \Pi_{h} \tautens^{1}, 0) , p_{h} )_{T}  \\
& \quad  = \sum_{T \in \mathcal{T}_{h}} \left( \int_{\partial T} ((\tautens^{1} - \Pi_{h} \tautens^{1}) \cdot \mbf{n}_{T} ) \cdot \mbf{x}^{\perp} \; p_{h} \; r \; ds 
\ - \int_{T} (\tautens^{1} - \Pi_{h} \tautens^{1} ) : (\mbf{x}^{\perp} \otimes \nabla \; p_{h} ) \; r \; dT  \right. \\
& \quad \quad \quad \quad \quad \quad \quad \quad \left. - \int_{T} \frac{1}{r} \; 0 \; z \; p_{h} \; r \; dT \right) \ =  \ 0 .
\end{split}
\end{align}

The $(\tautens^{2} , \tau^{2})$ used in establishing the continuous inf-sup condition is not sufficiently regular in order to 
construct a suitable projection. To circumvent this problem we use 
\eqref{eq:stokes_requirement_0},\eqref{eq:stokes_requirement_1} to determine
a suitable replacement for $(\tautens^{2} , \tau^{2})$, namely $(\tautens^{2}_{h} , \tau^{2}_{h})$, and then use
a projection of $(\tautens^{2}_{h} , \tau^{2}_{h})$ to help satisfy \eqref{eq:discrete_inf_sup_condition}.

Let $\mbf{w}_{h} \in (\Theta_{h})^{2}$ be determined by \eqref{eq:stokes_requirement_0},\eqref{eq:stokes_requirement_1},
and define
\begin{align*}
\tautens_{h}^{2} \, = \, 2 \begin{pmatrix} \dfrac{\partial w_{h1} }{\partial z} & -\dfrac{1}{r} \dfrac{\partial}{\partial r}{(r \; w_{h1})} - \dfrac{\partial w_{h2}}{\partial z} \\ 0 & 0 \end{pmatrix} \ \  \text{ and } \ \ 
 \tau_{h}^{2} = - 2 \; \pi_{h} \left( r \; \dfrac{\partial^{2} \;  w_{h2} }{\partial z^{2}} \right). 
\end{align*}

Then, (cf. \eqref{eq:tautens2_axidiv_zero}-\eqref{eq:tau_two_norm})
\begin{align}
& b((\tautens_{h}^{2}, \tau_{h}^{2}) , \mbf{v}_{h})  =
 \sum\limits_{T \in \mathcal{T}_{h} } \left(\axidiv \tautens_{h \, 1}^{2} \ - \ \frac{1}{r} (\tau_{h}^{2}) , v_{h 1} \right)_{T}  \nonumber \\
&
 \ \ \ = \sum\limits_{T \in \mathcal{T}_{h} } \left(\axidiv (\frac{\partial w_{h1} }{\partial z}  \, , \,  -\frac{1}{r} \dfrac{\partial}{\partial r}{(r \; w_{h1})} - \frac{\partial w_{h2}}{\partial z} ) \ - \ \frac{1}{r} ( - 2 \; \pi_{h} ( r \; \frac{\partial^{2} \;  w_{h2} }{\partial z^{2}} )) , 
 v_{h 1} \right)_{T}   \nonumber \\  
& \ \ \ = \sum\limits_{T \in \mathcal{T}_{h} } 2 \,  ( \frac{\partial^{2} w_{h1} }{\partial r \partial z} + \frac{1}{r} \frac{\partial w_{h1} }{\partial z}
-  \frac{1}{r} \frac{\partial w_{h1} }{\partial z} -  \frac{\partial^{2} w_{h1} }{\partial r \partial z} - 
 \frac{\partial^{2} w_{h2} }{\partial^{2} z} +  \frac{\partial^{2} w_{h2} }{\partial^{2} z}  , 
 v_{h 1})_{T} \ \ \ \mbox{ (using \eqref{eq:rz_projection_3})}   \nonumber \\
& \ \ \ = 0  \ = \ b((\tautens^{2}, \tau^{2}) , \mbf{v}_{h})  \, .   \label{bneweq1}
\end{align}

From \eqref{eq:stokes_requirement_1}),
\begin{align}
\label{eq:mm2p5}
\left(  \text{as} (\tautens_{h}^{2} ) \, , \, \mathcal{S}^{2}(p_{h}) \right) 
&= \  \left( \mathcal{S}^{2}(- \beta) \, , \, \mathcal{S}^{2}(p_{h}) \right) \, .
\end{align}

For $c( (\tautens_{h}^{2}, \tau_{h}^{2}) , p_{h})$ we have
\begin{align*}
c( (\tautens_{h}^{2}, \tau_{h}^{2}) , p_{h}) &= (\tautens_{h}^{2}, \mathcal{S}^{2}(p_{h}))_{} 
+ (\axidiv (\tautens_{h}^{2}, \tau_{h}^{2}) \wedge \mbf{x}, p_{h})_{}   \\
&= \ (as(\tautens_{h}^{2}, \mathcal{S}^{2}(p_{h})) \ + \ 
\sum\limits_{T \in \mathcal{T}_{h} } \left(\axidiv \tautens_{h \, 1}^{2} \ - \ \frac{1}{r} (\tau_{h}^{2}) , z \, p_{h} \right)_{T} \\
&= \ (\mathcal{S}^{2}(- \beta) , \mathcal{S}^{2}(p_{h})) \ + \  0 \ \ \mbox{ (as in \eqref{bneweq1}) } \\
&= \ c( (\tautens^{2}, \tau^{2}) , p_{h}) \, .
\end{align*}

Using \eqref{eq:projection_bound_2} and \eqref{eq:stokes_requirement_2}
\[
\| \tau_{h}^{2} \|_{{}_{1}L^{2}(\Omega)} \ = \ \| 2 \; \pi_{h} \left( r \; \dfrac{\partial^{2} \;  w_{h2} }{\partial z^{2}} \right)
 \|_{{}_{1}L^{2}(\Omega)} 
\ \le \ C \,  \left(\sum_{T \in \mathcal{T}_{h}} \left\| r \dfrac{\partial^{2} w_{h  2} }{\partial z^{2}} \right\|^{2}_{{}_{1}L^{2}(T)} \right)^{\frac{1}{2}} 
\ \le \ C \, \beta \, .
\]

Then, from \eqref{eq:stokes_requirement_2}
 and  $\| \beta \|_{{}_{1}L^{2}(\Omega)} \leq C \left( \| \mbf{v}_{h} \|_{U} + \| p_{h} \|_{Q} \right)$,  it follows that
\begin{align}
\label{eq:mm3}
\| (\tautens_{h}^{2}, \tau_{h}^{2}) \|_{\bSigma} \leq C \left( \| \mbf{v}_{h} \|_{U} + \| p_{h} \|_{Q} \right).
\end{align}

Now, for $(\Pi_{h} \tautens_{h}^{2}, \tau^{2}_{h})  \in \bSigma_{h} $, proceeding as in \eqref{eq:b_operator_thm41}; using \eqref{eq:rz_projection_1}-\eqref{eq:rz_projection_2} (with $q_{h} = 0$), and \eqref{eq:rz_projection_3} 
\begin{align}
\label{eq:mm4}
\begin{split}
& b((\tautens_{h}^{2}, \tau_{h}^{2}) - (\Pi_{h}  \tautens_{h}^{2}, \tau_{h}^{2}) \ , \ \mbf{v}_{h}) \ = \ b((\tautens_{h}^{2} - \Pi_{h} \tautens_{h}^{2}, 0) \ , \ \mbf{v}_{h})  \\
& \quad \quad = \sum_{T \in \mathcal{T}_{h}} \left( \axidiv (\tautens_{h}^{2} - \Pi_{h} \tautens_{h}^{2}), \mbf{v}_{h}\right)_{T} 
\  =  \ 0.
\end{split}
\end{align}

Also, as in \eqref{eq:c_operator_thm41}, and using \eqref{eq:rz_projection_3},
\begin{align}
\label{eq:mm5}
\begin{split}
& c( (\tautens_{h}^{2}, \tau_{h}^{2}) - (\Pi_{h} \tautens_{h}^{2}, \tau_{h}^{2}) , p_{h}) \ = \ \sum_{T \in \mathcal{T}_{h}} c( (\tautens_{h}^{2} - \Pi_{h} \tautens_{h}^{2}), 0 ), p_{h} )_{T}  \\
& = \sum_{T \in \mathcal{T}_{h}} \left( \int_{\partial T} ((\tautens_{h}^{2} - \Pi_{h} \tautens_{h}^{2}) \cdot \mbf{n}_{T} ) 
 \cdot \mbf{x}^{\perp} \;  p_{h} \; r \; ds 
\ -  \ \int_{T} (\tautens_{h}^{2} - \Pi_{h} \tautens_{h}^{2} ) : ( \mbf{x}^{\perp} \otimes \nabla p_{h} ) \; r \; d T  \right.  = 0.
\end{split}
\end{align}

Finally, with $(\tautens_{h}, \tau_{h}) = (\Pi_{h} \tautens^{1}, \pi_{h} \tau^{1}) + (\Pi_{h} \tautens_{h}^{2},  \tau_{h}^{2}) \in \bSigma_{h}$,
\begin{align*}
\begin{split}
&\sup_{(\sigtens_{h}, \sigma_{h}) \in \bSigma_{h}} \dfrac{b((\sigtens_{h}, \sigma_{h}), \mbf{v}_{h}) + c((\sigtens_{h}, \sigma_{h}), p_{h} )  }{\| (\sigtens_{h}, \sigma_{h}) \|_{\bSigma} \left( \| \mbf{v}_{h} \|_{U} + \| p_{h} \|_{Q} \right) } \geq \dfrac{b((\tautens_{h}, \tau_{h}), \mbf{v}_{h}) + c((\tautens_{h}, \tau_{h}), p_{h} )  }{\| (\tautens_{h}, \tau_{h}) \|_{\bSigma} \left( \| \mbf{v}_{h} \|_{U} + \| p_{h} \|_{Q} \right) }     \\
&\geq \dfrac{b((\Pi_{h} \tautens^{1}, \pi_{h} \tau^{1}), \mbf{v}_{h}) + c((\Pi_{h} \tautens^{1}, \pi_{h} \tau^{1}), p_{h} ) + b((\Pi_{h} \tautens^{2}_{h},  \tau^{2}_{h}), \mbf{v}_{h}) + c((\Pi_{h} \tautens^{2}_{h}, \tau^{2}_{h}), p_{h} )  }{\left( \| (\Pi_{h} \tautens^{1}, \pi_{h} \tau^{1}) \|_{\bSigma} + \| (\Pi_{h} \tautens^{2}_{h},  \tau^{2}_{h}) \|_{\bSigma}  \right) \left( \| \mbf{v}_{h} \|_{U} + \| p_{h} \|_{Q} \right) }  \\
%
& \geq C \; \dfrac{b((\tautens^{1}, \tau^{1}), \mbf{v}_{h}) + c((\tautens^{1}, \tau^{1}), p_{h} ) + 
b((\tautens_{h}^{2}, \tau_{h}^{2}) , \mbf{v}_{h}) + c(( \tautens^{2}_{h},  \tau^{2}_{h}), p_{h} )  }{\left( \| (\tautens^{1}, \tau^{1}) \|_{\bS} + \| (\tautens^{2}_{h}, \tau^{2}_{h}) \|_{\bSigma}  \right) \left( \| \mbf{v}_{h} \|_{U} + \| p_{h} \|_{Q} \right) } \\
& \geq C \; \dfrac{b((\tautens^{1}, \tau^{1}), \mbf{v}_{h}) + c((\tautens^{1}, \tau^{1}), p_{h} ) + 
b((\tautens^{2}, \tau^{2}) , \mbf{v}_{h}) + c(( \tautens^{2},  \tau^{2}), p_{h} )  }{\left( \| (\tautens^{1}, \tau^{1}) \|_{\bS} + \| (\tautens^{2}_{h}, \tau^{2}_{h}) \|_{\bSigma}  \right) \left( \| \mbf{v}_{h} \|_{U} + \| p_{h} \|_{Q} \right) } \\
%
&\geq C \, \dfrac{\|\mbf{v}_{h} \|^{2}_{U} + \|p_{h} \|^{2}_{Q}}{(\| \mbf{v}_{h} \|_{U} + \|p_{h} \|_{Q} + \| \mbf{v}_{h} \|_{U} + \| p_{h} \|_{Q} )(\| \mbf{v}_{h} \|_{U} + \| p_{h} \|_{Q})} \\
&\geq C .
\end{split}
\end{align*}
\end{proof}

 Throughout the remainder of this document, we will denote the space $(\bS + \bS_{\Theta_{h}} )$ as $\bSigma^{S}$.  Additionally, 
 we denote the tensor and scalar components of $\bSigma^{S}$ as $\Sigma^{S}_{\sigtens}$ and $\Sigma^{S}_{\sigma}$, i.e.,
  $\bSigma^{S} = \Sigma^{S}_{\sigtens} \times \Sigma^{S}_{\sigma}$.

 \setcounter{equation}{0}
\setcounter{figure}{0}
\setcounter{table}{0}
\setcounter{theorem}{0}
\setcounter{lemma}{0}
\setcounter{corollary}{0}
\setcounter{definition}{0}
\section{Approximation spaces satisfying \eqref{eq:rz_projection_1},\eqref{eq:rz_projection_2}}
\label{sec:disspace}
In this section we investigate approximation spaces for $\bSigma^{S}$, $U_{h}$, and $Q_{h}$ such that there
exists a projection operator $\boldsymbol{\mbf{\Pi}_{h}} (\tautens , \tau)$ satisfying \eqref{eq:rz_projection_1}-\eqref{eq:rz_projection_3}.

\subsection{$\bSigma_{h} \, = \, \left( \mbf{BDM}_{1}(\mcT_{h}) \right)^{2} \times P_{1}(\mcT_{h})$, 
$U_{h} \, = \, \left( P_{0}(\mcT_{h}) \right)^{2}$, and $Q_{h} \, = \, P_{0}(\mcT_{h})$}
\label{ssec:bdm_1}

In this section we show that for the choice of spaces 
$\Sigma_{h , \sigtens} \, = \, \left( \mbf{BDM}_{1}(\mcT_{h}) \right)^{2}$, $\Sigma_{h , \sigma} \, = \, P_{1}(\mcT_{h})$,
$U_{h} \, = \, \left( P_{0}(\mcT_{h}) \right)^{2}$, and $Q_{h} \, = \, P_{0}(\mcT_{h})$ there exists a 
projection operator, $\boldsymbol{\mbf{\Pi}_{h}} (\tautens , \tau)$,  satisfying \eqref{eq:rz_projection_1}-\eqref{eq:rz_projection_3}.
 
\begin{lemma}
\label{lem:bdm1_projection}
Let $T \in \mathcal{T}_{h}$.  The mappings $\Pi_{h} : \Sigma_{\sigtens}^{S}(T) \rightarrow (P_{1}(T))^{4}$ and 
$\pi_{h} : \Sigma_{\sigma}^{S}(T) \rightarrow P_{1}(T)$ given by
\begin{align}
\label{eq:BDM1_interpolation_operator_lem1}
\int_{\ell} (\tautens - \Pi_{h} \tautens) \cdot \mbf{n}_{k} \cdot \mbf{p}_{1} \; r \; ds &= 0 
\text{ for all edges } \ell \in \partial T \text{ and }  \mbf{p}_{1} \in (P_{1}(\ell))^{2}\\
\label{eq:pi_projection_operator_lem1}
\int_{T} \frac{1}{r} (\tau - \pi_{h} \tau) \; p_{1} \, r  \; dT &= 0 \text{ for all } p_{1} \in P_{1}(T)
\end{align}
are well defined.  Hence the spaces
\begin{align}
\label{eq:bdm1_discrete_function_spaces}
\Sigma_{h,\sigtens} = (\mbf{BDM}_{1}(\mcT_{h}))^{2} \quad \quad \Sigma_{h, \sigma} = P_{1}(\mcT_{h}) \quad \quad 
U_{h} = (P_{0}(\mcT_{h}))^{2} 
\quad \quad Q_{h} = P_{0}(\mcT_{h}) 
\end{align}
satisfy \eqref{eq:rz_projection_1}-\eqref{eq:rz_projection_3}.
\end{lemma}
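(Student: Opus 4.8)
The plan is to treat the statement in two stages: first to construct the element-local maps $\Pi_h$ and $\pi_h$ and verify that the functionals in \eqref{eq:BDM1_interpolation_operator_lem1} and \eqref{eq:pi_projection_operator_lem1} are unisolvent, so that $\bPi_h := \Pi_h \times \pi_h$ is well defined on $\Sigma_{\sigtens}^{S}(T)\times\Sigma_{\sigma}^{S}(T)$ and, glued over $\mcT_{h}$, maps $\bSigma^{S}$ into $\bSigma_h$; and then to read \eqref{eq:rz_projection_1}--\eqref{eq:rz_projection_3} off these definitions, using two elementary facts about the test functions permitted by the choices $U_h = (P_0(\mcT_{h}))^2$ and $Q_h = P_0(\mcT_{h})$.

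Well-definedness of $\pi_h$ is essentially free: in \eqref{eq:pi_projection_operator_lem1} the weight $1/r$ cancels the $r$ in the area element, so $\pi_h\tau$ is nothing but the ordinary $L^2(T)$ projection of $\tau$ onto $P_1(T)$, which exists uniquely because the Gram matrix of a basis of $P_1(T)$ is positive definite; one only notes that each element of $\Sigma_{\sigma}^{S}(T)$ is a polynomial on $T$ (the $\bS$-part is $0$, the $\bS_{\Theta_{h}}$-part is $r\,\partial^{2} w_{h2}/\partial z^{2}$ with $w_{h2}$ piecewise polynomial), so it pairs with $P_1(T)$. For $\Pi_h$ I would argue on each of the two rows separately: a row of a member of $(P_1(T))^4$ is a vector in $(P_1(T))^2 = \mbf{BDM}_{1}(T)$, and \eqref{eq:BDM1_interpolation_operator_lem1} supplies $\dim(P_1(\ell))^2 = 4$ conditions per edge, i.e.\ twelve conditions for the twelve coefficients of $(P_1(T))^4$, so by a dimension count it suffices to prove injectivity. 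If $\mbf{G}\in(P_1(T))^4$ annihilates all those functionals, then on any edge $\ell$ not meeting the symmetry axis the pairing $(p,q)\mapsto\int_\ell p\,q\,r\,ds$ is positive definite on $P_1(\ell)$ (the weight $r$ is affine and strictly positive a.e.\ on $\ell$) and $\mbf{G}\cdot\mbf{n}_\ell|_\ell\in(P_1(\ell))^2$, hence $\mbf{G}\cdot\mbf{n}_\ell=0$ on $\ell$; a $(P_1(T))^2$ field with vanishing normal trace on all three edges of $T$ is zero, so $\mbf{G}=0$. The same edgewise positive-definiteness shows that whenever $\tautens$ has $\bSigma$-conforming normal trace across an interior edge --- which it does, since $\axidiv\tautens\in{}_{1}L^{2}(\Omega)$ --- so does $\Pi_h\tautens$, hence $\bPi_h$ indeed maps into $\bSigma_h$.

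The main obstacle I anticipate is the case of a triangle $T$ with an edge $\ell_0$ on the rotation axis $\{r=0\}$: there $r\equiv 0$ on $\ell_0$, so the four functionals of \eqref{eq:BDM1_interpolation_operator_lem1} attached to $\ell_0$ vanish identically and the count above yields only eight effective conditions for a twelve-dimensional space. The way around this is to exploit that every $\tautens\in\Sigma_{\sigtens}^{S}(T)$ automatically has vanishing normal trace on $\ell_0$: for the $\bS$-part the radial column consists of the $r$-components of ${}_{1}\mbf{VH}^{1}(\Omega)$ fields, hence of ${}_{1}V^{1}(\Omega)\subset{}_{-1}L^{2}(\Omega)$ functions, which must vanish on $\{r=0\}$, and for the $\bS_{\Theta_{h}}$-part the radial column is $(\partial_{z} w_{h1},0)^{t}$ with $w_{h1}\in{}_{1}V^{1}(\Omega)$ vanishing along $\ell_0$; since $\mbf{n}_{\ell_{0}}$ points in the radial direction, $\tautens\cdot\mbf{n}_{\ell_{0}}|_{\ell_{0}}=0$. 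One then completes the definition of $\Pi_h$ on such an element by additionally imposing $\Pi_h\tautens\cdot\mbf{n}_{\ell_{0}}=0$ on $\ell_0$; this restores twelve independent conditions, keeps $\Pi_h\tautens$ normal-conforming across the axis, and makes $\Pi_h$ well defined there. (It should be checked at this point that the $\mbf{BDM}_{1}(\mcT_{h})$ stress space on axis elements is compatible with this requirement, i.e.\ that its members have radial column vanishing on axis edges.)

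It remains to obtain \eqref{eq:rz_projection_1}--\eqref{eq:rz_projection_3}. Condition \eqref{eq:rz_projection_1} is automatic: for $\mbf{u}_h\in(P_0(\mcT_{h}))^2$ and $q_h\in P_0(\mcT_{h})$ one has $\nabla\mbf{u}_h\equiv0$ and $\nabla q_h\equiv0$ on each $T$, so $\nabla\mbf{u}_h+\mbf{x}^{\perp}\otimes\nabla q_h\equiv0$ and the integrand vanishes for any $\Pi_h$. For \eqref{eq:rz_projection_2}, on an edge $\ell$ of an element $K$ the trace of $\mbf{u}_h|_K$ is a constant vector and $\mbf{x}^{\perp}q_h|_K = q_h|_K\,(z,-r)^{t}$ is affine, so $\mbf{u}_h+\mbf{x}^{\perp}q_h$ restricts on $\ell$ to a member of $(P_1(\ell))^2$; \eqref{eq:BDM1_interpolation_operator_lem1} with $\mbf{p}_1=\mbf{u}_h+\mbf{x}^{\perp}q_h$ then gives \eqref{eq:rz_projection_2}. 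For \eqref{eq:rz_projection_3}, the admissible $\sigma$ are $z\,q_h$ with $q_h\in Q_h$ constant on $T$, hence in $\mathrm{span}\{z\}\subset P_1(T)$, or $v_{h1}$ with $(v_{h1},0)\in U_h$ constant on $T$, hence in $P_0(T)\subset P_1(T)$; in either case $\sigma|_T\in P_1(T)$, and \eqref{eq:pi_projection_operator_lem1} with $p_1=\sigma$ yields \eqref{eq:rz_projection_3}. This exhibits $\bPi_h$ with the properties making the spaces \eqref{eq:bdm1_discrete_function_spaces} satisfy \eqref{eq:rz_projection_1}--\eqref{eq:rz_projection_3}.
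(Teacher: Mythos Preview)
Your proof is correct and follows the same overall structure as the paper: identify $\pi_h$ as an $L^2$ projection, establish $\Pi_h$ by a dimension count plus injectivity, then read off \eqref{eq:rz_projection_1}--\eqref{eq:rz_projection_3} from the fact that the required test functions lie in $(P_1(\ell))^2$ and $P_1(T)$. The one noteworthy difference is in the injectivity argument for $\Pi_h$. The paper chooses two Gauss points on each edge, builds a Lagrangian $P_1(\ell)$ basis and a $\mbf{BDM}_1$ basis whose normal components are Lagrangian at those points, and reduces the edge moments to a diagonal system; when a quadrature point has $r=0$ it invokes axisymmetry to force the corresponding coefficient to vanish. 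You instead use the positive-definiteness of the $r$-weighted edge pairing on non-axis edges to kill the normal trace directly, and on an axis edge you explicitly append the constraint $\Pi_h\tautens\cdot\mbf{n}_{\ell_0}=0$, justified by showing every $\tautens\in\Sigma_{\sigtens}^{S}(T)$ already satisfies it. Your treatment of the axis degeneracy is more explicit and arguably cleaner --- it names the extra four conditions rather than absorbing them into an axisymmetry side-remark --- while the paper's quadrature-basis computation is more constructive and makes the diagonalization visible. Both reach the same conclusion by the same essential mechanism.
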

\begin{proof}
Observe that $\pi_{h}$ is the well defined $L^{2}$ projection.   

Next we show that $\Pi_{h}$ is well defined.  Note that 
$\Pi_{h} \tautens \in ( P_{1}(T) )^{4} \, = \, (\mbf{BDM}_{1}(T))^{2}$ has 12 degrees of freedom, 
and $(P_{1}(\ell))^{2}$ has 4 degrees of freedom per edge.  Thus the number of unknowns in $\Pi_{h} \tautens$ 
is equal to the number of constraints in \eqref{eq:BDM1_interpolation_operator_lem1}.  
It follows that if $\tautens = \mbf{0}$ implies that $\Pi_{h} \tautens = \mbf{0}$, then the projection $\Pi_{h}$ is well defined.

Consider a single row of the tensor projection \eqref{eq:BDM1_interpolation_operator_lem1}.  
In this case, for $\tautens = (\boldsymbol{\tau}_{1}, \boldsymbol{\tau}_{2})^{t}$ the projection  
\eqref{eq:BDM1_interpolation_operator_lem1} takes the form
\begin{align}
\label{eq:BDM1_interpolation_operator_vector}
\int_{\ell} (\tautens_{s} - \Pi_{h} \tautens_{s} ) \cdot \mbf{n}_{k} \; p_{1} \; r \; ds = 0 \text{ for } p_{1} \in P_{1}(\ell) , \ s=1,2.
\end{align}

Next, observe that the function $\Pi_{h} \tautens_{s} \cdot \mbf{n}_{k} \; p_{1} \; r$ is a cubic polynomial.  
Recalling that a degree $n$ Gauss quadrature rule integrates polynomials of degree $2n-1$ exactly, we 
select two Gauss quadrature points $\{q^{\ell_{k}}_{i} \}_{i=1}^{2}$ on each edge $\ell_{k}$ for $k=1,2,3$.

For $\ell_{k} \in \partial K$, define a basis for $P_{1}(\ell_{k})$ so that
\begin{align} \label{eq:p1_basis}
p_{1}^{\ell_{k}} (x) =
\begin{cases}
1 \text{ if } x = q^{\ell_{k}}_{1}\\
0 \text{ if } x = q^{\ell_{k}}_{2}
\end{cases} \text{ and } \quad 
p_{2}^{\ell_{k}} (x) =
\begin{cases}
0 \text{ if } x = q^{\ell_{k}}_{1}\\
1 \text{ if } x = q^{\ell_{k}}_{2}
\end{cases}.
\end{align}

Let $\{ \boldsymbol{\phi}^{\ell_{k}}_{i} \}$ be a basis for $\mbf{BDM}_{1}(T)$ \cite{Ervin} such that 
\begin{align*}
(\boldsymbol{\phi}_{i}^{\ell_{m}} \cdot \mbf{n}) (q_{j}^{\ell_{n}}) = \delta_{(i,j),(\ell_{m},\ell_{n})} 
\text{ for } i, j = 1, 2 \text{ and } m, n = 1,2,3.
\end{align*}

Note that the normal component of the basis functions satisfy a Lagrangian property at the boundary quadrature points.  
Since $\Pi_{h}  \tautens_{s} \in \mbf{BDM}_{1}(T)$, it can be written as
\begin{align*}
\Pi_{h} \tautens_{s} = \sum_{k=1}^{3} \sum_{i=1}^{2} \alpha^{\ell_{k}}_{i} \boldsymbol{\phi}^{\ell_{k}}_{i}.
\end{align*}

With $\tautens_{s} = \mbf{0}$, taking the basis function $p_{1}^{\ell_{k}}$ for $\ell_{k} \in \partial K$ and using
 \eqref{eq:BDM1_interpolation_operator_lem1} and Gaussian quadrature gives  
\begin{align*}
0 = \int_{\ell_{k}} \Pi_{h} \tautens_{s} \cdot \mbf{n} \; p_{1} \; r \; ds &= \sum_{j=1}^{2} 
(\Pi_{h} \tautens_{s} \cdot \mbf{n}) (q^{\ell_{k}}_{j}) 
\cdot p^{\ell_{k}}_{1} (q^{\ell_{k}}_{j}) \; r(q^{\ell_{k}}_{j}) \; w(q_{j}^{\ell_{k}})  \\
&= \alpha^{\ell_{k}}_{1} \; p^{\ell_{k}}_{1} (q^{\ell_{k}}_{1}) \; r(q^{\ell_{k}}_{1}) w(q_{1}^{\ell_{k}}) 
+ \alpha^{\ell_{k}}_{2} \; p^{\ell_{k}}_{1} (q^{\ell_{k}}_{2}) \; r(q^{\ell_{k}}_{2}) w(q_{2}^{\ell_{k}}) \\
& = \alpha_{1}^{\ell_{k}} r(q_{1}^{\ell_{k}}) w(q_{1}^{\ell_{k}}).
\end{align*}

In the case where $r(q_{1}^{\ell_{k}}) \neq 0$, this implies $\alpha^{\ell_{k}}_{1} = 0$.  If, however, $r(q_{1}^{\ell_{k}}) = 0$, 
then $\alpha_{1}^{\ell_{k}}$ and $\beta_{1}^{\ell_{k}}$ must be zero, otherwise, the normal stress along the axis of symmetry 
will be non-zero implying that the solution is not axisymmetric.  A similar argument can be used to show that the other 
$\alpha$ terms are also zero.  
Hence the vector projection from \eqref{eq:BDM1_interpolation_operator_vector} is well defined.

To extend the vector projection from \eqref{eq:BDM1_interpolation_operator_vector} to the tensor projection given
in
 \eqref{eq:BDM1_interpolation_operator_lem1}, we extend the basis for $P_{1}(\ell_{k})$ from
  \eqref{eq:p1_basis} to $(P_{1}(\ell_{k}))^{2}$ by using
\begin{align*}
(P_{1}(\ell_{k}))^{2} \ = \ \mbox{span} 
\left \{ \begin{pmatrix} p_{1}^{\ell_{k}} \\ 0 \end{pmatrix}, \quad \begin{pmatrix} p_{2}^{\ell_{k}} \\ 0 \end{pmatrix}, 
\quad \begin{pmatrix} 0 \\ p_{1}^{\ell_{k}} \end{pmatrix}, \quad \begin{pmatrix} 0 \\ p_{2}^{\ell_{k}} \end{pmatrix}\right \}.
\end{align*} 

With this basis, the arguments presented above for the vector case can be applied to each row 
of \eqref{eq:BDM1_interpolation_operator_lem1} to show that $\Pi_{h}$ is well defined.

Lastly, we verify that the spaces given in \eqref{eq:bdm1_discrete_function_spaces} satisfy the conditions outlined in 
\eqref{eq:rz_projection_1}-\eqref{eq:rz_projection_3}.  Since gradients of the piecewise constant spaces 
$U_{h}$ and $Q_{h}$ are zero on each element $T$, \eqref{eq:rz_projection_1} is trivially satisfied. 
Next, observe that the test space of \eqref{eq:BDM1_interpolation_operator_lem1} includes all $\mbf{p}_{1} \in (P_{1}(\ell_{k}))^{2}$ 
for $k=1,2,3$, while \eqref{eq:rz_projection_2} only requires that the projection is satisfied on a subspace of $(P_{1}(\ell_{k}))^{2}$. 
 Finally, since $P_{0}(T) \subset P_{1}(T)$, \eqref{eq:pi_projection_operator_lem1} ensures that \eqref{eq:rz_projection_3} is satisfied.
\end{proof}

\subsection{$\bSigma_{h} \, = \, \left( \mbf{BDM}_{2}(\mcT_{h}) \right)^{2} \times P_{2}(\mcT_{h})$, 
$U_{h} \, = \, \left( P_{1}(\mcT_{h}) \right)^{2}$, and $Q_{h} \, = \, P_{1}(\mcT_{h})$}
\label{ssec:bdm_2}

In this section we show that for the choice of spaces 
$\Sigma_{h , \sigtens} \, = \, \left( \mbf{BDM}_{2}(\mcT_{h}) \right)^{2}$, $\Sigma_{h , \sigma} \, = \, P_{2}(\mcT_{h})$,
$U_{h} \, = \, \left( P_{1}(\mcT_{h}) \right)^{2}$, and $Q_{h} \, = \, P_{1}(\mcT_{h})$ there exists a 
projection operator, $\boldsymbol{\mbf{\Pi}_{h}} (\tautens , \tau)$,  satisfying \eqref{eq:rz_projection_1}-\eqref{eq:rz_projection_3}.
 
\begin{lemma}
\label{lem:BDM2_projection_stable}
Let $T \in \mathcal{T}_{h}$.  The projection operators 
$\Pi_{h}: \Sigma_{\sigtens}^{S}(T) \rightarrow (P_{2}(T))^{4}$ and $\pi_{h} : \Sigma_{\sigma}^{S}(T) \rightarrow P_{2}(T)$ given by
\begin{align}
\label{eq:BDM2_interpolation_operator_element_1}
\int_{T} (\tautens - \Pi_{h} \tautens) : (\underline{\mbf{p_{0}}} + \mbf{x}^{\perp} \otimes \mbf{p_{0}}) \; r \; d T &= 0 \quad 
\forall \; \underline{\mbf{p_{0}}} \in (P_{0}(T))^{2 \times 2} \quad \forall \; \mbf{p_{0}} \in (P_{0}(T))^{2} \quad  \\
\label{eq:BDM2_interpolation_operator_edge_1}
\int_{\ell} (\tautens - \Pi_{h} \tautens) \cdot \mbf{n}_{k} \cdot \mbf{p}_{2} \; r \; ds &= 0 \quad 
\forall \text{ edges } \ell \quad \forall \; \mbf{p}_{2} \in (P_{2}(\ell))^{2} \\
\label{eq:BDM2_interpolation_operator_p2}
\int_{T}  \frac{1}{r} (\tau - \pi_{h} \tau) \; p_{2} \; r \; d T &= 0 \text{ for all } p_{2} \in P_{2}(T)
\end{align}
are well defined. Hence the spaces
\begin{align}
\label{eq:bdm2_discrete_function_spaces}
\Sigma_{h,\sigtens} = (\mbf{BDM}_{2}(\mcT_{h}))^{2} \quad \quad \Sigma_{h, \sigma} = P_{2}(\mcT_{h})
 \quad \quad U_{h} = (P_{1})^{2}(\mcT_{h}) 
\quad \quad Q_{h} = P_{1}(\mcT_{h}) 
\end{align}
satisfy \eqref{eq:rz_projection_1}-\eqref{eq:rz_projection_3}.
\end{lemma}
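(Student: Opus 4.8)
The plan is to follow the template of Lemma~\ref{lem:bdm1_projection}. First, $\pi_{h}$: in \eqref{eq:BDM2_interpolation_operator_p2} the factor $\tfrac1r$ cancels the weight $r$, so the condition simply asks that $\tau - \pi_{h}\tau$ be $L^{2}(T)$-orthogonal to $P_{2}(T)$; hence $\pi_{h}$ is the (unweighted) $L^{2}(T)$-projection onto $P_{2}(T)$ and is well defined. For $\Pi_{h}$, one first checks the counts: $\Pi_{h}\tautens \in (\mbf{BDM}_{2}(T))^{2} = (P_{2}(T))^{4}$ has $24$ scalar unknowns, \eqref{eq:BDM2_interpolation_operator_edge_1} imposes $6$ conditions on each of the $3$ edges, and \eqref{eq:BDM2_interpolation_operator_element_1} imposes $4+2 = 6$ more, for $24$ constraints in all. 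So it suffices to show that $\tautens = \mbf{0}$ implies $\Pi_{h}\tautens = \mbf{0}$.

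For the edge conditions, argue one row and one edge at a time, exactly as in Lemma~\ref{lem:bdm1_projection}: choosing $\mbf{p}_{2} = (p_{2},0)^{t}$ or $(0,p_{2})^{t}$ in \eqref{eq:BDM2_interpolation_operator_edge_1} isolates a single row $\tautens_{s}$, and along an edge $\ell_{k}$ the integrand $(\Pi_{h}\tautens_{s}\cdot\mbf{n}_{k})\,p_{2}\,r$ is a polynomial of degree at most $5$ (degree-$2$ normal trace, degree-$2$ test function, affine weight $r$). A $3$-point Gauss rule is therefore exact; taking $p_{2}$ through a Lagrange basis of $P_{2}(\ell_{k})$ at the three Gauss nodes gives $(\Pi_{h}\tautens_{s}\cdot\mbf{n}_{k})(q_{i}^{\ell_{k}})\,r(q_{i}^{\ell_{k}}) = 0$, $i = 1,2,3$. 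Where $r \neq 0$ this kills the nodal value, and where a node sits on the rotation axis the same axisymmetry argument as in Lemma~\ref{lem:bdm1_projection} forces the corresponding coefficient to vanish; since a nonzero element of $P_{2}(\ell_{k})$ has at most two zeros, $\Pi_{h}\tautens_{s}\cdot\mbf{n}_{k} \equiv 0$ on every edge. Hence each row of $\Pi_{h}\tautens$ lies in the $3$-dimensional space $\mbf{V}^{0} := \{\mbf{v} \in \mbf{BDM}_{2}(T) : \mbf{v}\cdot\mbf{n} = 0 \text{ on } \partial T\}$.

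The crux is the interior part: with $\tautens = \mbf{0}$ and both rows of $\mbf{T} := \Pi_{h}\tautens$ lying in $\mbf{V}^{0}$, show that \eqref{eq:BDM2_interpolation_operator_element_1} forces $\mbf{T} = \mbf{0}$. Testing against $\mbf{p_{0}} = \mbf{0}$ and arbitrary $\underline{\mbf{p_{0}}}$ shows every entry of $\mbf{T}$ has zero $r$-weighted integral over $T$; testing against $\underline{\mbf{p_{0}}} = \mbf{0}$ and arbitrary $\mbf{p_{0}}$ gives the two further, row-coupling conditions $\int_{T}(z\,T_{11} - r\,T_{21})\,r\,dT = 0$ and $\int_{T}(z\,T_{12} - r\,T_{22})\,r\,dT = 0$. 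This is not the textbook unisolvence statement for $(\mbf{BDM}_{2}(T))^{2}$, precisely because of the row coupling and the $r$-weight. The natural route is that the four vanishing-mean conditions confine each row to the one-dimensional subspace of $\mbf{V}^{0}$ on which the two $r$-weighted component means vanish, say spanned by $\boldsymbol{\phi} = (\phi_{1},\phi_{2})^{t}$; writing $\mbf{T} = (t_{1}\boldsymbol{\phi}, t_{2}\boldsymbol{\phi})^{t}$ and substituting into the two coupling conditions yields a homogeneous $2\times2$ system for $(t_{1},t_{2})$ with matrix $\left(\begin{smallmatrix} \int_{T} z\phi_{1}\,r & -\int_{T} r\phi_{1}\,r \\ \int_{T} z\phi_{2}\,r & -\int_{T} r\phi_{2}\,r \end{smallmatrix}\right)$, and one must verify this matrix is nonsingular for every $T \in \mcT_{h}$ --- in particular for triangles with an edge on the rotation axis, where $r$ degenerates on a face. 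Establishing this nondegeneracy is where the real work lies; I expect it to follow either from an explicit reference-element computation combined with an affine pullback, or from a structural identification of the coupling terms with the bilinear form $c(\cdot,\cdot)$, ultimately resting on positivity of the $r$-weighted $L^{2}$ inner product on $T$ (valid since $r > 0$ a.e.\ on $T$, even when $T$ meets the axis).

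Granting that $\boldsymbol{\Pi}_{h} = \Pi_{h}\times\pi_{h}$ is well defined, \eqref{eq:rz_projection_1}--\eqref{eq:rz_projection_3} follow from containment of test spaces. For \eqref{eq:rz_projection_1}: if $\mbf{u}_{h} \in (P_{1}(T))^{2}$ and $q_{h} \in P_{1}(T)$, then $\nabla\mbf{u}_{h} \in (P_{0}(T))^{2\times2}$ and $\nabla q_{h} \in (P_{0}(T))^{2}$, so $\nabla\mbf{u}_{h} + \mbf{x}^{\perp}\otimes\nabla q_{h}$ is admissible in \eqref{eq:BDM2_interpolation_operator_element_1}. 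For \eqref{eq:rz_projection_2}: on an edge $\ell$, $\mbf{u}_{h}|_{\ell} \in (P_{1}(\ell))^{2}$ and $(\mbf{x}^{\perp}q_{h})|_{\ell} \in (P_{2}(\ell))^{2}$ (the affine $\mbf{x}^{\perp}$ times the affine $q_{h}|_{\ell}$), so $\mbf{u}_{h} + \mbf{x}^{\perp}q_{h} \in (P_{2}(\ell))^{2}$, covered by \eqref{eq:BDM2_interpolation_operator_edge_1}. For \eqref{eq:rz_projection_3}: both $z\,q_{h}$ (with $q_{h} \in Q_{h} = P_{1}(T)$) and $v_{h1}$ (with $(v_{h1},0) \in U_{h} = (P_{1}(T))^{2}$) lie in $P_{2}(T)$, so the $L^{2}(T)$-projection property of $\pi_{h}$ from \eqref{eq:BDM2_interpolation_operator_p2} gives \eqref{eq:rz_projection_3}. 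Hence the spaces in \eqref{eq:bdm2_discrete_function_spaces} satisfy \eqref{eq:rz_projection_1}--\eqref{eq:rz_projection_3}.
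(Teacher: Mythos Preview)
Your overall architecture matches the paper's: $\pi_{h}$ is the unweighted $L^{2}$-projection, the constraint count is $24=24$, the edge argument with a three-point Gauss rule and the axisymmetry proviso on axis nodes is exactly the paper's (referring back to Lemma~\ref{lem:bdm1_projection}), and the final test-space containments are the same. The divergence is in the interior step, and there it is a genuine gap rather than a different route.

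You reduce the interior problem to a $2\times2$ determinant by first claiming that the four $r$-weighted mean conditions confine each row of $\mbf{T}$ to a \emph{one}-dimensional subspace of $\mbf{V}^{0}$. That preliminary reduction is itself a nondegeneracy statement (that the two functionals $\mbf{v}\mapsto\int_{T}v_{1}\,r$ and $\mbf{v}\mapsto\int_{T}v_{2}\,r$ are linearly independent on the three-dimensional $\mbf{V}^{0}$), and you do not verify it; you then explicitly leave the $2\times2$ determinant as ``where the real work lies.'' Neither positivity of the weighted $L^{2}$ inner product nor a structural link to $c(\cdot,\cdot)$ will settle these---the question is element-geometry-dependent and must be computed.

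The paper does not attempt your factorization. It writes $\widehat{\Pi_{h}\tautens}$ in the three explicit $\mbf{BDM}_{2}(\widehat{T})$ interior bubbles \eqref{eq:bdm2_interior_funcs}, takes the six test functions \eqref{eq:interior_test_functions}, assembles the full $6\times6$ matrix $M_{T}$ on the reference element via Lemma~\ref{lem:integral_eval}, and computes
\[
|M_{T}|=\tfrac{1}{36}(r_{1}^{*}{+}r_{2}^{*}{+}3)(2r_{1}^{*}{+}r_{2}^{*}{+}5)(r_{1}^{*}{+}2r_{2}^{*}{+}5)(2r_{1}^{*}{+}2r_{2}^{*}{+}5)\bigl((r_{1}^{*})^{2}{+}4r_{1}^{*}r_{2}^{*}{+}(r_{2}^{*})^{2}{+}10r_{1}^{*}{+}10r_{2}^{*}{+}15\bigr),
\]
which is strictly positive because $r_{1}^{*},r_{2}^{*}\ge 0$ (vertex labeling $r_{0}\le r_{1},r_{2}$). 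This single computation covers all triangle types, including those touching the axis. Your outline is correct in spirit, but to complete the proof you must carry out a comparable explicit calculation---and at that point the direct $6\times6$ determinant is no more work than verifying your $3\to1$ reduction and then the $2\times2$.
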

\begin{proof}
Observe that $\pi_{h}$ is the well defined $L^{2}$ projection.

Next we show that $\Pi_{h}$ is well defined.  First observe that the number of constraints defined by $\Pi_{h}$, 24, 
is the same as number of degrees of freedom in $( P_{2}(T) )^{4} \, = \, (\mbf{BDM}_{2}(T))^{2}$. 
We verify that the projection is injective by showing that
\begin{align}
\label{eq:tau_zero_1}
\int_{T} \Pi_{h} \tautens : (\underline{\mbf{p_{0}}} + \mbf{x}^{\perp} \otimes \mbf{p_{0}}) \; r \; d T &= 0 \quad 
\forall \; \underline{\mbf{p_{0}}} \in (P_{0}(T))^{2 \times 2} \quad \forall \; \mbf{p_{0}} \in (P_{0}(T))^{2} \\
\label{eq:tau_zero_2}
\int_{\ell} \Pi_{h} \tautens \cdot \mbf{n}_{k} \cdot \mbf{p}_{2} \; r \; ds &= 0 \quad 
\forall  \text{ edges } \ell \quad \forall \; \mbf{p}_{2} \in (P_{2}(\ell))^{2}
\end{align}
has the unique solution $\Pi_{h} \tautens = \mbf{0}$.

We can represent $\Pi_{h} \tautens$ in terms of the basis for $(\mbf{BDM}_{2}(\widehat{T}))^{2}$, where 
$\mbf{BDM}_{2}(\widehat{T})$ is the reference element representation presented in \cite[Section 4.2]{Ervin}.  
This $\mbf{BDM}_{2}(\widehat{T})$ basis is expressed in terms of edge and interior element functions.  Using 
equation \eqref{eq:tau_zero_2} with three Gauss quadrature points
and an argument analogous to that used in the proof of Lemma \ref{lem:bdm1_projection}, it follows that all 18 of the $\mbf{BDM}_{2}(\widehat{T})$ edge basis functions must equal zero.

Therefore, the only possible non-zero basis functions on $\widehat{T}$ are the interior element functions
\begin{align}
\begin{split}
\label{eq:bdm2_interior_funcs}
\underline{\phi}_{1} = \dfrac{\sqrt{2}}{(g_{2} - g_{1})} (1 - \xi - \eta)  &\begin{pmatrix} g_{2} \xi \\ (g_{2} - 1) \eta \end{pmatrix} \quad 
\underline{\phi}_{2} = \dfrac{1}{(g_{2}-g_{1})} \xi \begin{pmatrix} g_{2} \xi + \eta - g_{2} \\ (g_{2} - 1) \eta \end{pmatrix} \\[8pt]
\underline{\phi}_{3} &= \dfrac{1}{(g_{2} - g_{1})} \eta \begin{pmatrix} (g_{2} - 1) \xi \\ \xi + g_{2} \eta - g_{2} \end{pmatrix}
\end{split}
\end{align}
where $g_{1} = 1 /2 - \sqrt{3} / 6$ and $g_{2}= 1 / 2 + \sqrt{3} / 6$ are the Gaussian quadrature points on $[0,1]$.  
Thus, $\widehat{\Pi_{h} \tautens}$, the representation of $\Pi_{h} \tautens$ on $\widehat{T}$, must have the 
form $\widehat{\Pi_{h} \tautens} = \begin{pmatrix} \underline{\phi_{\alpha}}^{t} \\ \underline{\phi_{\beta}}^{t} \end{pmatrix} $
where
\begin{align*}
\underline{\phi_{\alpha}}^{t} = \alpha_{1} \underline{\phi_{1}}^{t} + \alpha_{2} \underline{\phi_{2}}^{t} + \alpha_{3} \underline{\phi_{3}}^{t}
\quad \text{ and } \quad 
\underline{\phi_{\beta}}^{t} = \beta_{1} \underline{\phi_{1}}^{t} + \beta_{2} \underline{\phi_{2}}^{t} + \beta_{3} \underline{\phi_{3}}^{t}.
\end{align*}

It remains to show that $\alpha_{i} = \beta_{i}=0$ for $i=1,2,3$.  To do so, we consider the matrix representation of equation 
\eqref{eq:BDM2_interpolation_operator_element_1}.  The functions in \eqref{eq:bdm2_interior_funcs} can be used as
 the six trial basis functions of \eqref{eq:BDM2_interpolation_operator_element_1}, while the test space of
 \eqref{eq:BDM2_interpolation_operator_element_1} has dimension 6, and is spanned by the functions
\begin{align}
\label{eq:interior_test_functions}
\underline{\psi}_{i} = \begin{pmatrix} \delta_{i1} & \delta_{i2} \\ \delta_{i3} & \delta_{i4} \end{pmatrix} + 
\begin{pmatrix} \eta \; \delta_{i5} & \eta \; \delta_{i6} \\ -\xi \delta_{i5} \; & -\xi \delta_{i6}  \end{pmatrix} \text{ for } i = 1, \cdots, 6 
\text{ and } \delta_{ij} \in \mathbb{R} \text{ for } i, j = 1, 2, \cdots 6.
\end{align}

Taking $\psi_{i}$ as the test function for row $i$, the resulting matrix representation of equation
 \eqref{eq:BDM2_interpolation_operator_element_1} is presented in \eqref{eq:interior_projection_mat} where 
 $I(\cdot)$ is defined in \eqref{eq:integral_short_hand}.
 
To illustrate how the elements of \eqref{eq:interior_projection_mat} are calculated, we consider the first row 
of \eqref{eq:interior_projection_mat}.  From \eqref{eq:integral_short_hand}, Lemma \ref{lem:integral_eval} and 
\eqref{eq:bdm_integrals}, the entries of the first row are
\begin{align*}
I(g_{2}(1-\xi-\eta) \xi) &= \int_{\widehat{T}} g_{2}(1-\xi-\eta) \; \xi \; (r_{1}^{*} \xi + r_{2}^{*} \eta + 1) \; d \widehat{T} \\
&= g_{2} \left [ \int_{\widehat{T}} (\xi - \xi^{2} - \eta \xi ) \; (r_{1}^{*} \xi + r_{2}^{*} \eta + 1) \; d \widehat{T}  \right] \\
&= g_{2} \left( [ \dfrac{1}{4!} (2 r^{*}_{1} + r^{*}_{2} + 4) - \dfrac{2}{5!}(3 r_{1}^{*} 
+ r_{2}^{*} + 5) - \dfrac{1}{5!} (2 r_{1}^{*} + 2 r_{2}^{*} + 5) \right)\\
&= g_{2} \left [ \dfrac{1}{5!} (2r_{1}^{*} + r_{2}^{*} + 5) \right] \\
I(\xi (g_{2} \xi + \eta - g_{2}) ) &= \int_{\widehat{T}} (g_{2} \xi^{2} + \xi \eta - g_{2} \xi ) (r_{1}^{*} \xi + r_{2}^{*} \eta + 1) \; d \widehat{T} \\
&= \left( \dfrac{2 g_{2}}{5!} (3 r_{1}^{*} + r_{2}^{*} + 5) + \dfrac{1}{5!} (2r_{1}^{*} + 2 r_{2}^{*} + 5) 
- \dfrac{g_{2}}{4!} (2 r_{1}^{*} + r_{2}^{*} + 4)  \right ) \\
&= 2 ( 1 - 2 g_{2}) r^{*}_{1} + (2 - 3 g_{2}) r_{2}^{*} + 5(1- 2g_{2})\\
I((g_{2}-1) \eta \xi) &= \int_{\widehat{T}} (g_{2}-1) \eta \xi  (r_{1}^{*} \xi + r_{2}^{*} \eta + 1) \; d \widehat{T} \\
&= (g_{2} - 1) \left[ \dfrac{1}{5!} (2 r_{1}^{*} + 2 r_{2}^{*} + 5) \right]
\end{align*}
with the remaining columns equaling zero.  A similar procedure can be used to find the remaining terms in the system.  
The complete entires of the matrix expressed in terms of the coordinates of the triangle $T$ are shown 
in \eqref{eq:interior_projection_mat_eval} which we denote $M_{T}$.
 
Taking the determinate of \eqref{eq:interior_projection_mat_eval} yields 
\begin{align*}
| M_{T} | &= \dfrac{1}{36} (r^{*}_{1} + r^{*}_{2} + 3)(2 r^{*}_{1} + r^{*}_{2} + 5) ( r^{*}_{1} + 2 r^{*}_{2} + 5) \\
& \quad ( 2 r^{*}_{1} + 2 r^{*}_{2} + 5) ( (r^{*}_{1})^{2} + 4 r^{*}_{1} r^{*}_{2} + (r^{*}_{2})^{2} + 10 r^{*}_{1}  + 10 r^{*}_{2} + 15).
\end{align*}


Since $r^{*}_{1}, r^{*}_{2} \geq 0$, it follows that $|M_{T} | > 0$ implying that the matrix representation of the projection 
operator is full rank.  Therefore $\Pi_{h} \tautens = \{\mbf{0} \}$ is the unique solution. Hence $\Pi_{h}$ is well defined.

Finally, we verify that the spaces given in \eqref{eq:bdm2_discrete_function_spaces} satisfy 
\eqref{eq:rz_projection_1}-\eqref{eq:rz_projection_3}.  Observe that for $U_{h} = (P_{1})^{2}$ and $Q_{h} = P_{1}$ the 
test space of \eqref{eq:rz_projection_1} is the set
\begin{align*}
\left \{ \begin{pmatrix} \delta_{1} + z \delta_{5} & \delta_{2} + z \delta_{6}\\ \delta_{3} -r \delta_{5} & \delta_{4} -r \delta_{6} \end{pmatrix}
 | \quad \forall \; \delta_{1},\delta_{2},\delta_{3},\delta_{4},\delta_{5},\delta_{6} \in \mathbb{R} \right \},
\end{align*}
which is the same as the test space described in \eqref{eq:BDM2_interpolation_operator_element_1}.  Furthermore, 
Theorem \ref{lem:thm_41} requires that \eqref{eq:rz_projection_2} is satisfied on a subset of
\begin{align*}
\left \{ \begin{pmatrix} s_{1} + s_{2} s + s_{3} s^{2} \\ s_{4} + s_{5} s + s_{6} s^{2} \end{pmatrix} |
 \quad \forall \; s_{1}, s_{2}, s_{3}, s_{4}, s_{5}, s_{6} \in \mathbb{R} \right \} 
\end{align*}
for all $\ell$.  Since the boundary integral \eqref{eq:BDM2_interpolation_operator_edge_1} is satisfied for all 
quadratic polynomials on all $\ell$, this condition is also satisfied.  Lastly, for $U_{h}(T)  =  \left( P_{1}(T) \right)^{2}$, 
$Q_{h}(T) = P_{1}(T)$, the test functions in \eqref{eq:rz_projection_3} are a subset of the test functions
in  \eqref{eq:BDM2_interpolation_operator_p2}.
\end{proof}

\afterpage{%
    \clearpage
    \thispagestyle{empty}
    \begin{landscape}
        \centering 
{\scriptsize
\begin{align}
\label{eq:interior_projection_mat}
\begin{pmatrix}
I(g_{2} (1 - \xi - \eta)\; \xi)& I ( \xi (g_{2} \xi + \eta - g_{2}))& I((g_{2} -1) \eta \xi )&0&0&0 \\
I((g_{2}-1)(1 - \xi - \eta)\; \eta )& I( (g_{2}-1) \eta \xi )& I( \eta (\xi + g_{2} \eta - g_{2})&0&0&0 \\
0&0&0&I(g_{2} (1 - \xi - \eta)\; \xi)& I ( \xi (g_{2} \xi + \eta - g_{2}))& I((g_{2} -1) \eta \xi ) \\
0&0&0&I((g_{2}-1)(1 - \xi - \eta)\; \eta )& I( (g_{2}-1) \eta \xi )& I( \eta (\xi + g_{2} \eta - g_{2})) \\
I((1 - \xi - \eta) g_{2} \xi \eta )& I(\xi (g_{2} \xi + \eta - g_{2}) \eta)& I(\eta (g_{2} - 1) \xi \eta)&
I(-(1-\xi-\eta) g_{2} \xi^{2} )& I(- \xi (g_{2} \xi + \eta - g_{2}) \xi ))& I(- \eta (g_{2} - 1) \xi^{2} ) \\
I((1 - \xi - \eta) (g_{2} - 1) \eta^{2} )& I(\xi (g_{2} - 1) \eta^{2})& I(\eta (\xi + g_{2} \eta - g_{2} ) \eta)&
I(-(1 - \xi - \eta) (g_{2}-1) \eta \xi)& I(- \xi (g_{2} - 1) \eta \xi )& I(- \eta (\xi + g_{2} \eta - g_{2} ) \xi) \\
\end{pmatrix}
\end{align}
\vfill
\scriptsize
\begin{multline}
\label{eq:interior_projection_mat_eval}
\left(
\begin{matrix}
g_{2}(2 r^{*}_{1} + r^{*}_{2} + 5) &  2(1-2g_{2})r^{*}_{1} + (2 - 3g_{2}) r^{*}_{2} + 5(1-2g_{2}) & (g_{2}-1) (2 r^{*}_{1} + 2 r^{*}_{2} + 5) \\
(g_{2}-1) (r^{*}_{1} + 2 r^{*}_{2} + 5) & (g_{2}-1) (2 r^{*}_{1} + 2 r^{*}_{2} + 5) &  ((2-3g_{2})r^{*}_{1} + 2(1-2g_{2})r^{*}_{2} + 5(1-2g_{2}) \\
0&0&0 \\
0&0&0 \\
g_{2}(r^{*}_{1} + r^{*}_{2} +3) & (2-3g_{2})r^{*}_{1} + (3-4 g_{2})r^{*}_{2} + 3(2-3g_{2}) & (g_{2}-1) (2r^{*}_{1} + 3 r^{*}_{2} + 6) \\
(g_{2}-1)(r^{*}_{1} + 3 r^{*}_{2} + 6) & (g_{2}-1) (2r^{*}_{1} + 3 r^{*}_{2} + 6) & (2-3g_{2}) r^{*}_{1} + 3(1-2g_{2}) r^{*}_{2} + 6(1-2g_{2})
\end{matrix} \right.
\\
\left.
\begin{matrix}
0 &  0 & 0 \\
0 &  0 & 0 \\
g_{2}(2 r^{*}_{1} + r^{*}_{2} + 5) &  2(1-2g_{2})r^{*}_{1} + (2 - 3g_{2}) r^{*}_{2} + 5(1-2g_{2}) & (g_{2}-1) (2 r^{*}_{1} + 2 r^{*}_{2} + 5) \\
(g_{2}-1) (r^{*}_{1} + 2 r^{*}_{2} + 5) & (g_{2}-1) (2 r^{*}_{1} + 2 r^{*}_{2} + 5) &  (2-3g_{2})r^{*}_{1} + 2(1-2g_{2})r^{*}_{2} + 5(1-2g_{2}) \\
-g_{2}(3 r^{*}_{1} + r^{*}_{2} + 6) & -[(3-6g_{2})r^{*}_{1} + (2 - 3g_{2})r^{*}_{2} + 6(1-2g_{2})] & -(g_{2}-1) (3 r^{*}_{1} + 2 r^{*}_{2} + 6) \\
-(g_{2}-1) (r^{*}_{1}+r^{*}_{2}+3) & -(g_{2}-1)(3 r^{*}_{1} + 2 r^{*}_{2} + 6) & -(3-4g_{2})r^{*}_{1} + (2-3g_{2})r^{*}_{2} + 3(2-3g_{2})
\end{matrix} \right)
\end{multline}
\vfill
}
{
}
    \end{landscape}
    \clearpage
}


\newpage


 \setcounter{equation}{0}
\setcounter{figure}{0}
\setcounter{table}{0}
\setcounter{theorem}{0}
\setcounter{lemma}{0}
\setcounter{corollary}{0}
\setcounter{definition}{0}
\section{Error Analysis}
\label{sec:convergence_analysis}
In this section, for $\bSigma_{h} \times U_{h} \times Q_{h}$ satisfying the inf-sup condition
\begin{align}
\label{eq:convergence_inf_sup}
\inf_{\mbf{w}_{h} \in U_{h}, p_{h} \in Q_{h}} \sup_{(\sigtens_{h}, \sigma_{h}) \in \bSigma_{h}} 
\dfrac{b((\sigtens_{h}, \sigma_{h}), \mbf{w}_{h}) + c((\sigtens_{h}, \sigma_{h}), p_{h})}{ (\| (\sigtens_{h}, \sigma_{h})
 \|_{\bSigma}) (\| \mbf{w}_{h} \|_{U} + \|p_{h}\|_{Q}) } \geq \beta > 0,
\end{align} 
we present an error analysis for the solution to the discrete linear elasticity problem 
\eqref{eq:axi_saddle_point_1_mod_discrete}-\eqref{eq:axi_saddle_point_3_mod_discrete}.  
For notational compactness, we let
\begin{align}
\label{eq:b_operator_convergence_analysis}
B((\sigtens_{h}, \sigma_{h}), (\mbf{v}_{h}, p_{h} ) ) = b((\sigtens_{h}, \sigma_{h}), \mbf{v}_{h}) + c((\sigtens_{h}, \sigma_{h}), p_{h}).
\end{align}

Recall that operator $a(\cdot, \cdot): \bSigma_{h} \times \bSigma_{h} \rightarrow \mathbb{R}$ as defined in 
\eqref{eq:axi_a_grad_div} is continuous and coercive (see Lemmas \ref{lem:a_bounded} and \ref{lem:a_coercive}).  That is,
\begin{align}
\label{eq:a_coercivity}
a((\sigtens,\sigma),(\sigtens,\sigma)) &= \| (\sigtens, \sigma) \|^{2}_{\bSigma} \geq \gamma > 0 
\text{ for all } (\sigtens,\sigma) \in \bSigma_{h}, \\
\label{eq:a_continuous}
a((\sigtens, \sigma), (\tautens, \tau)) &\leq \alpha \| (\sigtens, \sigma) \|_{\bSigma} \| (\tautens, \tau) \|_{\bSigma} 
\end{align}
for some $\alpha > 0$ and all $(\sigtens, \sigma), (\tautens, \tau) \in \bSigma$.  We also note that $B((\cdot, \cdot), (\cdot, \cdot) )$ 
is continuous since
\begin{align}
\label{eq:b_continuous}
\begin{split}
B((\sigtens, \sigma), (\mbf{v}, q)) &= b( (\sigtens, \sigma), \mbf{v}) + c( (\sigtens, \sigma), q ) \\
&= (\mbf{v}, \axidiv \sigtens) - (v_{r}, \dfrac{\sigma}{r}) +  (\sigtens, \mathcal{S}^{2}(q)) + (\axidiv (\sigtens, \sigma) \wedge \mbf{x}, q)   \\
&\leq C_{1} \| \mbf{v} \|_{{}_{1}\mbf{L}^{2}(\Omega)} \| (\sigtens, \sigma) \|_{\bSigma} 
+ C_{2}  \| q \|_{{}_{1}L^{2}(\Omega)} \| (\sigtens, \sigma) \|_{\bSigma} \\ 
&\leq \beta \| (\sigtens, \sigma) \|_{\bSigma} ( \| \mbf{v} \|_{U} + \| q \|_{Q})
\end{split}
\end{align}
for all $(\sigtens, \sigma) \in \bSigma$, $\mbf{v} \in U$ and $q \in Q$ where $C_{1}, C_{2}, \beta > 0$.

The discrete null space of the operator $B((\cdot, \cdot), (\cdot, \cdot))$ is defined as
\begin{align}
\label{eq:zh_space}
Z_{h} = \{ (\tautens_{h}, \tau_{h}) \in \bSigma_{h} : B((\tautens_{h}, \tau_{h}) , (\mbf{v}_{h}, q_{h}) = 0 
\text{ for all } \mbf{v}_{h} \in U_{h} \text{ and } q_{h} \in Q_{h} \}.
\end{align}

Since $B((\tautens_{h}, \tau_{h}), (\mbf{v}_{h}, q_{h})) = 0$ only holds on the discrete subspaces 
$U_{h}$ and $Q_{h}$, $Z_{h} \not \subset Z$.  This observation motivates the following theorem which bounds 
the error $\sigtens_{h}$ in terms of the spaces $U_{h}$, $Q_{h}$ and $Z_{h}$. 
\begin{theorem}
\label{lem:bound_num_one}
Let $((\sigtens, \sigma), \mbf{w}, p)$ solve \eqref{eq:axi_saddle_point_1_mod}-\eqref{eq:axi_saddle_point_3_mod} 
and $(\sigtens_{h}, \sigma_{h})$ solve \eqref{eq:axi_saddle_point_1_mod_discrete}-\eqref{eq:axi_saddle_point_3_mod_discrete}.  
If $\bSigma_{h} \subset \bSigma$, $U_{h} \subset U$, $Q_{h} \subset Q$, and $Z_{h}$ is defined as in \eqref{eq:zh_space}, then
\begin{align*}
\begin{split}
\| (\sigtens - \sigtens_{h} , \sigma - \sigma_{h} ) \|_{\bSigma} &\leq 
C \Big( \inf_{(\tautens_{h}, \tau_{h}) \in \bSigma_{h}} \|  (\sigtens - \tautens_{h}, \sigma - \tau_{h}) \|_{\bSigma} \\
& \quad \quad \quad + \inf_{\mbf{v}_{h} \in U_{h} } \| \mbf{w} - \mbf{v}_{h} \|_{U} + \inf_{q_{h} \in Q_{h}} \|p - q_{h} \|_{Q} \Big),
\end{split}
\end{align*}
where $C >0$, is independent of $h$.
\end{theorem}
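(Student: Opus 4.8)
The plan is to run the standard Babuška--Brezzi / second Strang argument for a mixed problem whose principal form $a(\cdot,\cdot)$ is coercive on the \emph{whole} trial space (not merely on a kernel), the only genuinely non-routine feature being that the discrete kernel $Z_h$ is not contained in the continuous one, so the stress estimate and the multiplier estimate have to be coupled through the discrete inf--sup condition \eqref{eq:convergence_inf_sup}. First I would combine the two data terms by setting $\langle F,(\mbf{v},q)\rangle := (\mbf{f},\mbf{v}) + (\mbf{f}\wedge\mbf{x},q)$, so that, with $B$ as in \eqref{eq:b_operator_convergence_analysis}, \eqref{eq:axi_saddle_point_1_mod}--\eqref{eq:axi_saddle_point_3_mod} reads $a((\sigtens,\sigma),(\tautens,\tau)) + B((\tautens,\tau),(\mbf{w},p)) = (\mbf{f},\axidiv(\tautens,\tau))$ and $B((\sigtens,\sigma),(\mbf{v},q)) = \langle F,(\mbf{v},q)\rangle$, and likewise with subscripts $h$ for \eqref{eq:axi_saddle_point_1_mod_discrete}--\eqref{eq:axi_saddle_point_3_mod_discrete}. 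Since $\bSigma_h\subset\bSigma$, $U_h\subset U$, $Q_h\subset Q$ and the bilinear forms and data functionals coincide with their discrete restrictions, subtracting gives the Galerkin orthogonalities
\[
a((\sigtens-\sigtens_h,\sigma-\sigma_h),(\tautens_h,\tau_h)) + B((\tautens_h,\tau_h),(\mbf{w}-\mbf{w}_h,p-p_h)) = 0 \quad\text{for all }(\tautens_h,\tau_h)\in\bSigma_h,
\]
and $B((\sigtens-\sigtens_h,\sigma-\sigma_h),(\mbf{v}_h,q_h)) = 0$ for all $(\mbf{v}_h,q_h)\in U_h\times Q_h$. I write $\alpha$ and $\gamma$ for the continuity and coercivity constants of $a$ (Lemmas \ref{lem:a_bounded} and \ref{lem:a_coercive}), $C_B$ for the continuity constant of $B$ (cf.\ \eqref{eq:b_continuous}), and $\beta$ for the discrete inf--sup constant in \eqref{eq:convergence_inf_sup}.

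Fix arbitrary $(\tautens_h,\tau_h)\in\bSigma_h$ and $(\mbf{v}_h,q_h)\in U_h\times Q_h$, and abbreviate $E_\sigma := \|(\sigtens-\tautens_h,\sigma-\tau_h)\|_{\bSigma}$, $E_w := \|\mbf{w}-\mbf{v}_h\|_U + \|p-q_h\|_Q$, $\eta := \|(\sigtens-\sigtens_h,\sigma-\sigma_h)\|_{\bSigma}$, $\Delta_\sigma := (\sigtens_h-\tautens_h,\sigma_h-\tau_h)\in\bSigma_h$, $\Delta_w := (\mbf{w}_h-\mbf{v}_h,p_h-q_h)\in U_h\times Q_h$. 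The first step bounds $\|\Delta_w\|_{U\times Q}$: by \eqref{eq:convergence_inf_sup} choose $(\tautens^{\ast},\tau^{\ast})\in\bSigma_h$ with $\|(\tautens^{\ast},\tau^{\ast})\|_{\bSigma} = \|\Delta_w\|_{U\times Q}$ and $B((\tautens^{\ast},\tau^{\ast}),\Delta_w)\ge\beta\|\Delta_w\|_{U\times Q}^2$. Splitting $\Delta_w = (\mbf{w}_h-\mbf{w},p_h-p) + (\mbf{w}-\mbf{v}_h,p-q_h)$, the first Galerkin orthogonality tested with $(\tautens^{\ast},\tau^{\ast})$ turns the first piece into $-a((\sigtens-\sigtens_h,\sigma-\sigma_h),(\tautens^{\ast},\tau^{\ast}))$, so continuity of $a$ and $B$ give $\beta\|\Delta_w\|_{U\times Q}^2 \le (\alpha\eta + C_B E_w)\|\Delta_w\|_{U\times Q}$, hence $\|\Delta_w\|_{U\times Q} \le \beta^{-1}(\alpha\eta + C_B E_w)$; call this estimate $(\ast)$ (it is vacuous if $\|\Delta_w\|_{U\times Q}=0$).

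The second step bounds $\|\Delta_\sigma\|_{\bSigma}$. Coercivity gives $\gamma\|\Delta_\sigma\|_{\bSigma}^2 \le a(\Delta_\sigma,\Delta_\sigma) = a((\sigtens_h-\sigtens,\sigma_h-\sigma),\Delta_\sigma) + a((\sigtens-\tautens_h,\sigma-\tau_h),\Delta_\sigma)$. For the first term, the first Galerkin orthogonality with test $\Delta_\sigma$ yields $a((\sigtens_h-\sigtens,\sigma_h-\sigma),\Delta_\sigma) = B(\Delta_\sigma,(\mbf{w}-\mbf{w}_h,p-p_h)) = B(\Delta_\sigma,(\mbf{w}-\mbf{v}_h,p-q_h)) - B(\Delta_\sigma,\Delta_w)$, and by the second Galerkin orthogonality $B(\Delta_\sigma,\Delta_w) = B((\sigtens-\tautens_h,\sigma-\tau_h),\Delta_w)$. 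Using continuity of $a$ and $B$,
\[
\gamma\|\Delta_\sigma\|_{\bSigma}^2 \le C_B E_w\,\|\Delta_\sigma\|_{\bSigma} + C_B E_\sigma\,\|\Delta_w\|_{U\times Q} + \alpha E_\sigma\,\|\Delta_\sigma\|_{\bSigma}.
\]
Into this I substitute $(\ast)$ together with $\eta \le E_\sigma + \|\Delta_\sigma\|_{\bSigma}$, and then absorb every $\|\Delta_\sigma\|_{\bSigma}$ term on the left via Young's inequality, obtaining $\|\Delta_\sigma\|_{\bSigma} \le C(E_\sigma + E_w)$ with $C$ depending only on $\alpha,\gamma,C_B,\beta$. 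A final triangle inequality $\eta \le E_\sigma + \|\Delta_\sigma\|_{\bSigma}$ and the arbitrariness of $(\tautens_h,\tau_h)$ and $(\mbf{v}_h,q_h)$ (minimizing in the two arguments separately) then give the claimed estimate with $C$ independent of $h$.

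The main obstacle is not any single inequality but the coupling forced by $Z_h\not\subset Z$: one cannot bound $\eta$ directly against a best approximation of $(\sigtens,\sigma)$ in some kernel, because the multiplier error $(\mbf{w}-\mbf{w}_h,p-p_h)$ re-enters through the first Galerkin orthogonality and must itself be controlled by $(\ast)$ via the discrete inf--sup, which is why the first step must precede the second. The remaining care is purely in the order of the Young's-inequality absorptions --- $(\ast)$ reintroduces $\eta$, hence after the triangle inequality $\|\Delta_\sigma\|_{\bSigma}$, so these have to be moved to the left before any constant is fixed --- which is routine once the two orthogonalities and the three structural facts (continuity and coercivity of $a$, continuity of $B$, discrete inf--sup) are available.
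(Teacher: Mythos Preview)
Your argument is correct and complete; the two Galerkin orthogonalities, the coercivity/continuity of $a$, the continuity of $B$, and the discrete inf--sup are exactly the ingredients needed, and your bookkeeping with the coupled inequalities is sound (the minor sign in ``turns the first piece into $-a(\ldots)$'' is harmless since only the magnitude enters the estimate).

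The route, however, differs from the paper's. The paper first restricts the test space to the discrete kernel $Z_h$ defined in \eqref{eq:zh_space}: for $(\xitens_h,\xi_h)\in Z_h$ the multiplier term $B((\xitens_h,\xi_h),(\mbf{w}_h,p_h))$ drops out, so one gets directly $a((\sigtens-\sigtens_h,\sigma-\sigma_h),(\xitens_h,\xi_h)) = -B((\xitens_h,\xi_h),(\mbf{w}-\mbf{v}_h,p-q_h))$ for all $(\mbf{v}_h,q_h)\in U_h\times Q_h$. Coercivity then yields a clean C\'ea-type bound with $\inf_{(\tautens_h,\tau_h)\in Z_h}$ on the right, and the discrete inf--sup enters only at the very end, through the equivalent Fortin projection, to ``lift'' this infimum from $Z_h$ to all of $\bSigma_h$. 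By contrast you never restrict to $Z_h$: you work with an arbitrary $(\tautens_h,\tau_h)\in\bSigma_h$, pay for this by having $\|\Delta_w\|$ appear in the stress estimate, and control that term up front via the inf--sup (your step $(\ast)$). Your approach is a bit more entangled (the coupled inequalities and the Young absorption), but it avoids the explicit detour through $Z_h$ and the Fortin lift; as a side benefit, $(\ast)$ together with the final bound on $\eta$ already delivers the multiplier estimate of Theorem~\ref{lem:displacement_bound}, so you have effectively proved both theorems at once.
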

\begin{proof}
Let $(\sigtens_{h}, \sigma_{h}) \in Z_{h}$ be the unique solution to
\begin{align}
\label{eq:a_over_zh}
a((\sigtens_{h}, \sigma_{h}), (\tautens_{h}, \tau_{h}) ) = (\mbf{f}, \axidiv (\tautens_{h}, \tau_{h})) 
\text{ for all } (\tautens_{h}, \tau_{h}) \in Z_{h},
\end{align}
as ensured by the Lax-Milgram Theorem (provided that $\mbf{f}$ lives in the dual space of 
$_{1} \mbf{H} (\taxidiv, \Omega ;  \mathbb{R}^{2})$).  

To develop an error bound, for $(\sigtens_{h}, \sigma_{h})$, 
we must compare it with the true solution $(\sigtens, \sigma)$.  Noting again that $Z_{h} \not \subset Z$, from
 \eqref{eq:axi_saddle_point_1_mod}-\eqref{eq:axi_saddle_point_3_mod} the true solution $((\sigtens,\sigma),\mbf{w},p)$ 
 satisfies
\begin{align}
\label{eq:a_tilde_1}
a((\sigtens,\sigma),(\xitens_{h}, \xi_{h})) = (\mbf{f}, \axidiv(\xitens_{h}, \xi_{h}) ) - B((\xitens_{h}, \xi_{h}), (\mbf{w}, p)) 
\text{ for all } (\xitens_{h}, \xi) \in \bSigma_{h}.
\end{align}
Subtracting \eqref{eq:a_over_zh} from \eqref{eq:a_tilde_1}
\begin{align*}
a ((\sigtens - \sigtens_{h}, \sigma-\sigma_{h}), (\xitens_{h}, \xi_{h})) = - B((\xitens_{h}, \xi_{h}), (\mbf{w}, p)) 
\text{ for all } (\xitens_{h}, \xi_{h}) \in Z_{h}.
\end{align*}
From \eqref{eq:zh_space} it then follows that for all $(\xitens_{h}, \xi_{h}) \in Z_{h}, \mbf{v}_{h} \in U_{h}, q_{h} \in Q_{h}$
\begin{align}
\label{eq:a_operator_and_b_operator}
a((\sigtens-\sigtens_{h}, \sigma - \sigma_{h}), (\xitens_{h}, \xi_{h})) = 
- B ((\xitens_{h}, \xi_{h}), (\mbf{w}, p)) + B((\xitens_{h}, \xi_{h}), (\mbf{v}_{h}, q_{h})).
\end{align}
Next, adding and subtracting $(\tautens_{h}, \tau_{h}) \in Z_{h}$ in $a(\cdot, \cdot)$, \eqref{eq:a_operator_and_b_operator} becomes
\begin{align*}
\begin{split}
a((\tautens_{h} - \sigtens_{h},\tau_{h} - \sigma_{h}),(\xitens_{h}, \xi_{h})) = 
&- a((\sigtens - \tautens_{h}, \sigma - \tau_{h}), (\xitens_{h}, \xi_{h})) \\
& \quad \quad \quad \quad - B((\xitens_{h}, \xi_{h}), (\mbf{w} - \mbf{v}_{h}, p - q_{h})).
\end{split}
\end{align*}
Choosing $(\xitens_{h}, \xi_{h}) = (\tautens_{h} - \sigtens_{h}, \tau_{h} - \sigma_{h}) \in Z_{h}$, and using the 
continuity and coercivity of $a(\cdot, \cdot)$ (described in \eqref{eq:a_continuous}, \eqref{eq:a_coercivity}) and the 
continuity of $B((\cdot, \cdot),(\cdot,\cdot))$ (described in \eqref{eq:b_continuous}) we obtain
\begin{align*}
\begin{split}
0 &< \; \gamma \| ( \tautens_{h} - \sigtens_{h}, \tau_{h} - \sigma_{h}) \|^{2}_{\bSigma} \\
 &\leq \alpha \; \| ( \tautens_{h} - \sigtens_{h}, \tau_{h} - \sigma_{h}) \|_{\bSigma}  
    \|(\sigtens - \tautens_{h}, \sigma - \tau_{h}) \|_{\bSigma}  \\ 
& \quad + \beta \; \| ( \tautens_{h} - \sigtens_{h}, \tau_{h} - \sigma_{h}) \|_{\bSigma} 
\left( \| \mbf{w} - \mbf{v}_{h} \|_{U} + \| p - q_{h} \|_{Q}  \right).
\end{split}
\end{align*}
Dividing through by $\gamma \| (\tautens_{h} - \sigtens_{h}, \tau_{h} - \sigma_{h}) \|_{\bSigma}$ gives
\begin{align}
\label{eq:zero_inf_inequality}
\| ( \tautens_{h} - \sigtens_{h}, \tau_{h} - \sigma_{h}) \|_{\bSigma} \leq \dfrac{\alpha}{\gamma} 
\|(\sigtens - \tautens_{h}, \sigma - \tau_{h}) \|_{\bSigma} + \dfrac{\beta}{\gamma} 
\left( \| \mbf{w} - \mbf{v}_{h} \|_{U} + \| p - q_{h} \|_{Q} \right).
\end{align}
Next, applying the triangle inequality, for an arbitrary element $(\tautens_{h}, \tau_{h}) \in \bSigma_{h}$,
\begin{align}
\label{eq:first_inf_inequality}
\|(\sigtens - \sigtens_{h}, \sigma - \sigma_{h}) \|_{\bSigma} &\leq \| (\sigtens - \tautens_{h}, \sigma - \tau_{h}) \|_{\bSigma} 
+ \| (\tautens_{h} - \sigtens_{h}, \tau_{h} - \sigma_{h} ) \|_{\bSigma}.
\end{align}

Since $(\tautens_{h}, \tau_{h}) \in \bSigma_{h}$, $\mbf{v}_{h} \in U_{h}$ and $q_{h} \in Q_{h}$ are arbitrary, 
combining \eqref{eq:zero_inf_inequality} and \eqref{eq:first_inf_inequality} we get
\begin{align}
\label{eq:second_inf_inequality}
\begin{split}
\| (\sigtens - \sigtens_{h}, \sigma - \sigma_{h}) \|_{\bSigma} &\leq (1 + \dfrac{\alpha}{\gamma})
 \inf_{(\tautens_{h}, \tau_{h}) \in Z_{h}} \| (\sigtens - \tautens_{h}, \sigma - \tau_{h}) \|_{\bSigma} \\
& \quad + \dfrac{\beta}{\gamma}
 \left( \inf_{\mbf{v}_{h} \in U_{h}} \| \mbf{w} - \mbf{v}_{h} \|_{U} +  \inf_{q_{h} \in Q_{h}} \| p - q_{h} \|_{Q} \right).
\end{split}
\end{align}

In order to lift the approximation of $(\sigtens - \tautens_{h}, \sigma - \tau_{h})$ from the infinimum over $Z_{h}$ 
to the infinimum over $\bSigma_{h}$, we use the inf-sup condition \eqref{eq:convergence_inf_sup}.  
A equivalent property to the spaces $\bSigma_{h} \times U_{h} \times W_{h}$ satisfying \eqref{eq:convergence_inf_sup} 
is the existence of a projection $\Pi_{h} : \bSigma \rightarrow \bSigma_{h} $ satisfying
\begin{align*}
B(((\tautens, \tau) - \Pi_{h}(\tautens,\tau)), (\mbf{v}_{h}, q_{h}) ) &= 0 \text{ for all } (\mbf{v}_{h}, q_{h}) \in U_{h} \times Q_{h} \\
\mbox{and } \ \ 
 \| \Pi_{h}(\tautens, \tau) \|_{\bSigma} &\leq C_{\Pi} \| (\tautens, \tau) \|_{\bSigma},
\end{align*}
where $C_{\Pi} >0$ is a constant that is independent of $h$.

Let $(\xitens_{h}, \xi_{h}) \in \bSigma_{h}$, and introduce $(\rhotens_{h}, \rho_{h}) \in \bSigma_{h}$ satisfying
\begin{align*}
(\rhotens_{h}, \rho_{h}) = \Pi_{h} (\sigtens -  \xitens_{h}, \sigma - \xi_{h}) \text{ where } \| (\rhotens_{h}, \rho_{h}) \|_{\bSigma} 
\leq C_{\Pi} \| (\sigtens - \xitens_{h}, \sigma - \xi_{h}) \|_{\bSigma}.
\end{align*}
Taking $(\tautens_{h}, \tau_{h}) = (\xitens_{h} + \rhotens_{h}, \xi_{h} + \rho_{h})$
\begin{align*}
\begin{split}
&B((\tautens_{h}, \tau_{h}), (\mbf{w}_{h} , q_{h})) = B((\xitens_{h},\xi_{h}), (\mbf{v}_{h}, q_{h})) 
+ B((\rhotens_{h},\rho_{h}), (\mbf{v}_{h}, q_{h})) \\
&\quad = B((\xitens_{h},\xi_{h}), (\mbf{v}_{h},q_{h})) 
+ B((\sigtens,\sigma), (\mbf{v}_{h}, q_{h})) - B((\xitens_{h},\xi_{h}), (\mbf{v}_{h}, q_{h})) \\
&\quad =B((\sigtens,\sigma), (\mbf{v}_{h}, q_{h})) = 0,
\end{split}
\end{align*}
which implies that $(\tautens_{h}, \tau_{h}) \in Z_{h}$.  

 Next, using $(\tautens_{h}, \tau_{h}) = (\xitens_{h} + \rhotens_{h}, \xi_{h} + \rho_{h})$
\begin{align*}
\begin{split}
\| (\sigtens - \tautens_{h}, \sigma - \tau_{h}) \|_{\bSigma} &\leq \| (\sigtens - \xitens_{h}, \sigma - \xi_{h}) \|_{\bSigma} 
+ \| (\rhotens_{h}, \rho_{h}) \|_{\bSigma} 
\leq (1 + C_{\Pi}) \| (\sigtens - \xitens_{h}, \sigma - \xi_{h}) \|_{\bSigma} \, .
\end{split}
\end{align*}
Finally, taking infima over the appropriate spaces on the left and right sides gives the result
\begin{align}
\label{eq:lifting_arg}
\inf_{(\tautens_{h}, \tau_{h}) \in Z_{h}} \| (\sigtens - \tautens_{h}, \sigma - \tau_{h}) \|_{\bSigma} \leq 
(1 + C_{\Pi}) \inf_{(\xitens_{h}, \xi_{h}) \in \bSigma_{h} } \| (\sigtens - \xitens_{h}, \sigma - \xi_{h}) \|_{\bSigma}.
\end{align}
 
Combining \eqref{eq:second_inf_inequality} and \eqref{eq:lifting_arg} we obtain
\begin{align}
\label{eq:tau_bound_1}
\begin{split}
\| (\sigtens - \sigtens_{h}, \sigma- \sigma_{h}) \|_{\bSigma} 
&\leq C  ( \inf_{\tautens_{h}, \tau_{h} \in \bSigma_{h}} \| (\sigtens - \tautens_{h}, \sigma - \tau_{h}) \|_{\bSigma} \\
&\quad \quad \quad + \inf_{\mbf{v}_{h} \in U_{h} } \| \mbf{w} - \mbf{v}_{h} \|_{U} + \inf_{q_{h} \in Q_{h}} \|p - q_{h} \|_{Q} ).
\end{split}
\end{align}
\end{proof}

With error bounds for the stress space established, the following theorem establishes error bounds for the displacement 
and skew-symmetry approximations.  
\begin{theorem}
\label{lem:displacement_bound}
For $((\sigtens, \sigma), \mbf{w}, p)$ satisfying \eqref{eq:axi_saddle_point_1_mod}-\eqref{eq:axi_saddle_point_3_mod} 
and $((\sigtens_{h}, \sigma_{h}), \mbf{w}_{h}, p_{h})$ 
satisfying \eqref{eq:axi_saddle_point_1_mod_discrete}-\eqref{eq:axi_saddle_point_3_mod_discrete} 
there exists $C > 0$, independent of $h$, such that 
\begin{align}
\label{eq:displacment_bound_eq}
\begin{split}
&\|\mbf{w} - \mbf{w}_{h} \|_{U} + \| p - p_{h} \|_{Q} \\
&\leq C \left( \inf_{\tautens_{h}, \tau_{h} \in \bSigma_{h}} \| (\sigtens - \tautens_{h}, \sigma - \tau_{h}) \|_{\bSigma} 
+ \inf_{\mbf{v}_{h} \in U_{h}} \| \mbf{w} - \mbf{v}_{h} \|_{U} + \inf_{q_{h} \in Q_{h}} \| p-q_{h} \|_{Q} \right).
\end{split}
\end{align}
\end{theorem}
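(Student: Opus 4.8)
The plan is to use the discrete inf-sup condition \eqref{eq:convergence_inf_sup} to control $\|\mbf{w} - \mbf{w}_{h}\|_{U} + \|p - p_{h}\|_{Q}$ by a stress residual, and then absorb that residual using Theorem \ref{lem:bound_num_one}. The starting observation is that, since $\bSigma_{h} \subset \bSigma$, the first continuous equation \eqref{eq:axi_saddle_point_1_mod} may be tested against any $(\tautens_{h}, \tau_{h}) \in \bSigma_{h}$; subtracting the first discrete equation \eqref{eq:axi_saddle_point_1_mod_discrete} (with the same test function) from it, the $(\mbf{f}, \axidiv(\tautens_{h},\tau_{h}))$ terms cancel and, using bilinearity, one obtains the Galerkin-type identity
\begin{align*}
B((\tautens_{h}, \tau_{h}), (\mbf{w} - \mbf{w}_{h}, p - p_{h})) = a((\sigtens - \sigtens_{h}, \sigma - \sigma_{h}), (\tautens_{h}, \tau_{h})) \quad \text{for all } (\tautens_{h}, \tau_{h}) \in \bSigma_{h}.
\end{align*}

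Next I would fix arbitrary $\mbf{v}_{h} \in U_{h}$ and $q_{h} \in Q_{h}$ and apply \eqref{eq:convergence_inf_sup} to the discrete pair $(\mbf{w}_{h} - \mbf{v}_{h}, p_{h} - q_{h})$, giving
\begin{align*}
\beta \left( \|\mbf{w}_{h} - \mbf{v}_{h}\|_{U} + \|p_{h} - q_{h}\|_{Q} \right) \leq \sup_{(\tautens_{h}, \tau_{h}) \in \bSigma_{h}} \frac{B((\tautens_{h}, \tau_{h}), (\mbf{w}_{h} - \mbf{v}_{h}, p_{h} - q_{h}))}{\|(\tautens_{h}, \tau_{h})\|_{\bSigma}}.
\end{align*}
Writing $(\mbf{w}_{h} - \mbf{v}_{h}, p_{h} - q_{h}) = -(\mbf{w} - \mbf{w}_{h}, p - p_{h}) + (\mbf{w} - \mbf{v}_{h}, p - q_{h})$, the first part of the numerator is rewritten via the identity above as $-a((\sigtens - \sigtens_{h}, \sigma - \sigma_{h}), (\tautens_{h}, \tau_{h}))$ and bounded by $\alpha \|(\sigtens - \sigtens_{h}, \sigma - \sigma_{h})\|_{\bSigma} \|(\tautens_{h}, \tau_{h})\|_{\bSigma}$ using the continuity \eqref{eq:a_continuous} of $a(\cdot,\cdot)$, while the second part is bounded by $\beta \|(\tautens_{h}, \tau_{h})\|_{\bSigma} (\|\mbf{w} - \mbf{v}_{h}\|_{U} + \|p - q_{h}\|_{Q})$ using the continuity of $B((\cdot,\cdot),(\cdot,\cdot))$ established in \eqref{eq:b_continuous}. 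Dividing by $\|(\tautens_{h}, \tau_{h})\|_{\bSigma}$ and taking the supremum then yields
\begin{align*}
\|\mbf{w}_{h} - \mbf{v}_{h}\|_{U} + \|p_{h} - q_{h}\|_{Q} \leq \frac{\alpha}{\beta} \|(\sigtens - \sigtens_{h}, \sigma - \sigma_{h})\|_{\bSigma} + \|\mbf{w} - \mbf{v}_{h}\|_{U} + \|p - q_{h}\|_{Q}.
\end{align*}

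Finally I would apply the triangle inequality $\|\mbf{w} - \mbf{w}_{h}\|_{U} + \|p - p_{h}\|_{Q} \leq (\|\mbf{w} - \mbf{v}_{h}\|_{U} + \|p - q_{h}\|_{Q}) + (\|\mbf{w}_{h} - \mbf{v}_{h}\|_{U} + \|p_{h} - q_{h}\|_{Q})$, insert the estimate above, take the infimum over $\mbf{v}_{h} \in U_{h}$ and $q_{h} \in Q_{h}$, and bound the remaining stress term $\|(\sigtens - \sigtens_{h}, \sigma - \sigma_{h})\|_{\bSigma}$ by Theorem \ref{lem:bound_num_one}; collecting the constants gives \eqref{eq:displacment_bound_eq}. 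I do not anticipate any genuine obstacle: this is the standard second Babuska--Brezzi (Strang-type) argument, and the only point that needs care is that every $(\tautens_{h},\tau_{h}) \in \bSigma_{h}$ is admissible in the continuous problem, which holds because $\bSigma_{h} \subset \bSigma$, so no additional consistency error enters beyond the stress error already bounded in Theorem \ref{lem:bound_num_one}.
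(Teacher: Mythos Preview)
Your proposal is correct and follows essentially the same route as the paper's proof: derive the Galerkin orthogonality relation by subtracting \eqref{eq:axi_saddle_point_1_mod_discrete} from \eqref{eq:axi_saddle_point_1_mod}, apply the discrete inf-sup condition \eqref{eq:convergence_inf_sup} to $(\mbf{w}_{h}-\mbf{v}_{h},\,p_{h}-q_{h})$, split this pair, bound the two pieces via the continuity of $a(\cdot,\cdot)$ and $B(\cdot,\cdot)$, use the triangle inequality, and finish with Theorem~\ref{lem:bound_num_one}. The only slip is a sign in your displayed Galerkin identity (it should read $B((\tautens_{h},\tau_{h}),(\mbf{w}-\mbf{w}_{h},p-p_{h})) = -\,a((\sigtens-\sigtens_{h},\sigma-\sigma_{h}),(\tautens_{h},\tau_{h}))$), but this is harmless since you immediately bound it by continuity.
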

\begin{proof}
Subtracting equations \eqref{eq:axi_saddle_point_1_mod_discrete} from \eqref{eq:axi_saddle_point_1_mod} gives
\begin{align*}
B((\xitens_{h}, \xi_{h}), (\mbf{w} - \mbf{w}_{h}, p - p_{h} )) = - a( (\sigtens - \sigtens_{h}, \sigma - \sigma_{h}), (\xitens_{h}, \xi_{h})) 
\end{align*}
for all $(\xitens_{h}, \xi_{h}) \in \bSigma_{h}$.

For any $\mbf{v}_{h} \in U_{h}$ and $q_{h} \in Q_{h}$, the inf-sup condition \eqref{eq:convergence_inf_sup} gives
\begin{align}
\label{eq:inf_sup_bound}
\begin{split}
&\beta \left ( \| \mbf{w}_{h} - \mbf{v}_{h} \|_{U} + \| p_{h} - q_{h} \|_{Q} \right) \leq 
\sup_{(\xitens_{h}, \xi_{h}) \in \bSigma_{h}} 
\dfrac{| B((\xitens_{h}, \xi_{h}), (\mbf{w}_{h}-\mbf{v}_{h}, p_{h} - q_{h})) |}{\| (\xitens_{h}, \xi_{h}) \|_{\bSigma}} \\
&\leq \sup_{(\xitens_{h}, \xi_{h}) \in \bSigma_{h}} 
\left( \dfrac{| B((\xitens_{h}, \xi_{h}), (\mbf{w_{h}}-\mbf{w}, p_{h} - p)) |}{\| (\xitens_{h}, \xi_{h}) \|_{\bSigma}} 
+ \dfrac{| B((\xitens_{h}, \xi_{h}), (\mbf{w}-\mbf{v}_{h}, p - q_{h})) |}{\| (\xitens_{h}, \xi_{h}) \|_{\bSigma}} \right) \\
&\leq \sup_{(\xitens_{h}, \xi_{h}) \in \bSigma_{h}} 
\left(\dfrac{| -a((\sigtens - \sigtens_{h}, \sigma - \sigma_{h}), (\xitens_{h}, \xi_{h})) |}{\| (\xitens_{h}, \xi_{h}) \|_{\bSigma}} 
+ \dfrac{| B((\xitens_{h}, \xi_{h}), (\mbf{w}-\mbf{v}_{h}, p - q_{h})) |}{\| (\xitens_{h} , \xi_{h}) \|_{\bSigma}} \right) \\
&\leq \max \{\alpha, \beta\} ( \| (\sigtens - \sigtens_{h}, \sigma - \sigma_{h}) \|_{\bSigma}  
+ \| \mbf{w} - \mbf{v}_{h} \|_{U} + \|p - q_{h} \|_{Q}),
\end{split}
\end{align}
where in the last step we have used the continuity of $a(\cdot, \cdot)$ and $B(\cdot,\cdot)$.

Combining \eqref{eq:inf_sup_bound} with the triangle inequality gives
\begin{align}
\label{eq:non_inf_u_p_bound}
\begin{split}
 \| \mbf{w} - \mbf{w}_{h} \|_{U} + \| p - p_{h} \|_{Q}  
&\leq \| \mbf{w} - \mbf{v}_{h} \|_{U} + \|\mbf{v}_{h} - \mbf{w}_{h} \|_{U} + \| p - q_{h} \|_{Q} + \| q_{h} - p_{h} \|_{Q} \\
&\leq C ( \| (\sigtens - \sigtens_{h}, \sigma - \sigma_{h}) \|_{\bSigma} + \| \mbf{w} - \mbf{v}_{h} \|_{U} + \| p - q_{h} \|_{Q} ).
\end{split}
\end{align}
As $\mbf{v}_{h} \in U_{h}$ and $q_{h} \in Q_{h}$ are arbitrary, \eqref{eq:displacment_bound_eq} follows from \eqref{eq:non_inf_u_p_bound} and \eqref{eq:tau_bound_1}.
\end{proof}

Combining Theorems \ref{lem:bound_num_one} and \ref{lem:displacement_bound} we have the following.
\begin{corollary}
\label{thm:main_convergence_theorem}
Let $((\sigtens, \sigma), \mbf{w}, p) \in \bSigma \times U \times Q$ be the solution of 
\eqref{eq:axi_saddle_point_1_mod}-\eqref{eq:axi_saddle_point_3_mod} and 
$((\sigtens_{h}, \sigma_{h}), \mbf{w}_{h}, p_{h}) \in \bSigma_{h} \times U_{h} \times Q_{h}$ the solution 
of \eqref{eq:axi_saddle_point_1_mod_discrete}-\eqref{eq:axi_saddle_point_3_mod_discrete}, then  
\begin{align*}
\begin{split}
&\| (\sigtens - \sigtens_{h}, \sigma - \sigma_{h} )\|_{\bSigma} + \| \mbf{w} - \mbf{w}_{h} \|_{U} + \| p - p_{h} \|_{Q} \\
& \quad \quad \leq C  ( \inf_{(\tautens_{h}, \tau_{h}) \in \bSigma_{h} } \| (\sigtens - \tautens_{h}, \sigma - \tau_{h}) \|_{\bSigma}  + \inf_{\mbf{v}_{h} \in U_{h}} \| \mbf{w} - \mbf{v}_{h} \|_{U} + \inf_{q_{h} \in Q_{h}} \| p - q_{h} \|_{Q} ).
\end{split}
\end{align*}
\end{corollary}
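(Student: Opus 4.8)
The plan is to obtain the stated estimate by simply combining the two preceding results, since Theorem \ref{lem:bound_num_one} controls the stress part of the error and Theorem \ref{lem:displacement_bound} controls the displacement and skew-symmetry parts, and both are proven under exactly the hypotheses in force here (namely $\bSigma_h \subset \bSigma$, $U_h \subset U$, $Q_h \subset Q$, together with the discrete inf-sup condition \eqref{eq:convergence_inf_sup}).

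First I would invoke Theorem \ref{lem:bound_num_one} directly to write
\begin{align*}
\| (\sigtens - \sigtens_{h}, \sigma - \sigma_{h}) \|_{\bSigma} \le C_1 \Big( \inf_{(\tautens_{h}, \tau_{h}) \in \bSigma_{h}} \| (\sigtens - \tautens_{h}, \sigma - \tau_{h}) \|_{\bSigma} + \inf_{\mbf{v}_{h} \in U_{h}} \| \mbf{w} - \mbf{v}_{h} \|_{U} + \inf_{q_{h} \in Q_{h}} \| p - q_{h} \|_{Q} \Big),
\end{align*}
where I should note that the application of Theorem \ref{lem:bound_num_one} already used the lifting argument based on the bounded projection $\Pi_h$ equivalent to \eqref{eq:convergence_inf_sup}, so the infimum appears over $\bSigma_h$ rather than over $Z_h$. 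Then I would invoke Theorem \ref{lem:displacement_bound} to get the companion bound on $\| \mbf{w} - \mbf{w}_h \|_U + \| p - p_h \|_Q$ by the same three infima, with a constant $C_2$.

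Adding these two inequalities and setting $C = C_1 + C_2$ (or $C = \max\{C_1, C_2\}$ after trivial rearrangement) yields the claimed estimate for the full error $\| (\sigtens - \sigtens_{h}, \sigma - \sigma_{h})\|_{\bSigma} + \| \mbf{w} - \mbf{w}_{h} \|_{U} + \| p - p_{h} \|_{Q}$. Since the constants produced by Theorems \ref{lem:bound_num_one} and \ref{lem:displacement_bound} are each independent of $h$, so is $C$.

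There is no genuine obstacle in this argument; it is a bookkeeping step. The only point that deserves a sentence of care is making sure the two theorems are being applied to the same pair of solutions — the continuous solution $((\sigtens, \sigma), \mbf{w}, p)$ of \eqref{eq:axi_saddle_point_1_mod}-\eqref{eq:axi_saddle_point_3_mod} and the discrete solution $((\sigtens_{h}, \sigma_{h}), \mbf{w}_{h}, p_{h})$ of \eqref{eq:axi_saddle_point_1_mod_discrete}-\eqref{eq:axi_saddle_point_3_mod_discrete}, which is exactly the setup hypothesized in the corollary — so that the right-hand sides of the two bounds are literally identical and can be merged into a single constant.
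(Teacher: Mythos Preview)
Your proposal is correct and matches the paper's own proof, which simply states that the corollary follows by combining Theorems \ref{lem:bound_num_one} and \ref{lem:displacement_bound}. Your added remarks about the constants and the matching of hypotheses are accurate but go slightly beyond what the paper bothers to record.
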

\mbox{ } \hfill \qed

Using Corollary \ref{thm:main_convergence_theorem}, and additional smoothness assumptions, we can now form an 
error bound in terms of the mesh parameter $h$.  First observe that for the axisymmetric $\textbf{BDM}_{k}$ 
interpolation operator $\tilde{\rho}_{h}: {}_{1}\mbf{H}^{1}(\Omega) \rightarrow \textbf{BDM}_{k}(\mathcal{T}_{h})$ as 
defined in \cite{Ervin1}, if $\mbf{u} \in {}_{1}\mbf{H}^{k+1}(\Omega)$, then for some $C>0$,
\begin{align}
\label{eq:axi_bdm_h_bound_1}
\| \mbf{u} - \tilde{\rho}_{h}(\mbf{u}) \|_{{}_{1}L^{2}(\Omega)} \leq C \; h^{k+1} | \mbf{u} |_{{}_{1}\mbf{H}^{k+1}(\Omega)}.
\end{align}
In addition, if $\axidiv \mbf{u} \in {}_{1}H^{k}(\Omega)$ where 
$\left( \Sigma_{T \in \mathcal{T}_{h}} | \axidiv \tilde{\rho}_{h} (\mbf{u}) |^{2}_{{}_{1}H^{k+1}(T)} \right)^{2} 
< C_{1}$, then for some $C > 0$,
\begin{align}
\label{eq:axi_bdm_h_bound_2}
\| \axidiv \mbf{u} - \axidiv \tilde{\rho}_{h} (\mbf{u}) \|_{{}_{1}L^{2}(\Omega)} \leq C h^{k}.
\end{align}
Combining the results and assumptions of \eqref{eq:axi_bdm_h_bound_1} and 
\eqref{eq:axi_bdm_h_bound_2}, if $\mbf{u} \in {}_{1}\mbf{H}^{k+1}(\Omega)$ and $\axidiv \mbf{u} \in {}_{1}H^{k}(\Omega)$ 
where $\left( \Sigma_{T \in \mathcal{T}_{h}} | \axidiv \tilde{\rho}_{h} (\mbf{u}) |^{2}_{{}_{1}H^{k+1}(T)} \right) < C_{1}$, then 
there exists $C>0$ such that
\begin{align*}
\| \mbf{u} - \tilde{\rho}_{h} \; \mbf{u} \|_{{}_{1}\mbf{H}(\text{div},\Omega)} \leq C \; h^{k}.
\end{align*}
Under analogous assumptions, this result can be extended to the tensor case, 
where $\tilde{\pmb{\rho}}_{h} : {}_{1}\Htens^{1}(\Omega) \rightarrow (\textbf{BDM}_{k}(\mathcal{T}_{h}))^{2}$ 
represents the $\mbf{BDM}_{k}$ interpolation operator applied to the rows of a tensor so that
\begin{align}
\label{eq:bdm_tensor_interpolation_bound}
\| \sigtens - \tilde{\pmb{\rho}}_{h} \sigtens \|_{{}_{1}\Htens(\text{div},\Omega)} \leq C \; h^{k}.
\end{align}
Next we present a result from \cite{BelhachmiEtAl} which bounds the Cl\'{e}ment operator $\Lambda^{k}_{h}$.  
The Cl\'{e}ment operator $\Lambda^{k}_{h}$ maps ${}_{1}L^{2}(\Omega)$ into the space of degree $k$ Lagrangian 
finite elements on the mesh $\mathcal{T}_{h}$.  Indeed, as stated in Corollary 2 of Theorem 1 
in \cite{BelhachmiEtAl}, for $v \in {}_{1}H^{k+1}(\Omega)$, there exists a $C$ independent of $h$ such that 
\begin{align}
\label{eq:pressure_type_bound}
\| v - \Lambda^{k}_{h} v \|_{{}_{1}L^{2}(\Omega)} \leq C h^{k+1} | v |_{{}_{1}H^{k+1}(\Omega)}.
\end{align}
As with the \textbf{BDM} interpolation $\tilde{\rho}_{h}$, the bound for $\Lambda^{k}_{h}$ can be 
extended to vector and tensor functions.\\
The following corollary gives the error bound in terms of the mesh parameter $h$.  

\begin{corollary}
\label{cor:interpolation_error_bound_k12}
Assume that $\bPi_{h}$ of Lemma \ref{lem:bdm1_projection} or \ref{lem:BDM2_projection_stable} satisfies
 \eqref{eq:projection_bound_1}-\eqref{eq:projection_bound_2}.  
If $(\sigtens, \sigma, \mbf{w}, p) \in {}_{1}\Htens^{k}(\Omega) \times {}_{1}L^{2}(\Omega)  
\times {}_{1}\mbf{H}^{k}(\Omega) \times {}_{1}H^{k}(\Omega)$ solves 
\eqref{eq:axi_saddle_point_1_mod}-\eqref{eq:axi_saddle_point_3_mod} and 
$(\sigtens_{h}, \sigma_{h}, \mbf{w}_{h}, p_{h}) \in (\mbf{BDM}_{k})^{2}(\mathcal{T}_{h}) \times P_{k}(\mathcal{T}_{h}) 
\times (P_{k-1}(\mathcal{T}_{h}))^{2} \times P_{k-1}(\mathcal{T}_{h})$ 
solves \eqref{eq:axi_saddle_point_1_mod_discrete}-\eqref{eq:axi_saddle_point_3_mod_discrete} for $k = 1,2$, then  
\begin{align}
\label{eq:interp_error_bound}
\begin{split}
&\| (\sigtens - \sigtens_{h}, \sigma - \sigma_{h}) \|_{\bSigma} + \| \mbf{w} - \mbf{w}_{h} \|_{U} + \| p - p_{h} \|_{Q} \leq C \; h^{k}.
\end{split}
\end{align}
\end{corollary}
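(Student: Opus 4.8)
The plan is to combine Corollary \ref{thm:main_convergence_theorem} with the interpolation estimates \eqref{eq:bdm_tensor_interpolation_bound} and \eqref{eq:pressure_type_bound}, choosing in each infimum a specific interpolant of the exact solution components and then reading off the power of $h$. Concretely, in the bound
\[
\| (\sigtens - \sigtens_{h}, \sigma - \sigma_{h}) \|_{\bSigma} + \| \mbf{w} - \mbf{w}_{h} \|_{U} + \| p - p_{h} \|_{Q}
\leq C \Big( \inf_{(\tautens_{h}, \tau_{h})} \| (\sigtens - \tautens_{h}, \sigma - \tau_{h}) \|_{\bSigma} + \inf_{\mbf{v}_{h}} \| \mbf{w} - \mbf{v}_{h} \|_{U} + \inf_{q_{h}} \| p - q_{h} \|_{Q} \Big),
\]
I would take $\tautens_{h} = \tilde{\pmb{\rho}}_{h}\sigtens$ (the row-wise $\mbf{BDM}_{k}$ interpolant), $\tau_{h} = \Lambda^{k}_{h}\sigma$ (the degree-$k$ Cl\'ement interpolant into $P_{k}(\mcT_{h})$), $\mbf{v}_{h} = \Lambda^{k-1}_{h}\mbf{w}$, and $q_{h} = \Lambda^{k-1}_{h}p$. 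For $k=1,2$ these land in the spaces $(\mbf{BDM}_{k})^{2}(\mcT_{h}) \times P_{k}(\mcT_{h}) \times (P_{k-1}(\mcT_{h}))^{2} \times P_{k-1}(\mcT_{h})$ named in the statement.

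The estimate for the displacement and pressure infima is immediate: by \eqref{eq:pressure_type_bound} applied componentwise, $\| \mbf{w} - \Lambda^{k-1}_{h}\mbf{w} \|_{U} \le C h^{k} | \mbf{w} |_{{}_{1}\mbf{H}^{k}(\Omega)}$ and $\| p - \Lambda^{k-1}_{h}p \|_{Q} \le C h^{k} | p |_{{}_{1}H^{k}(\Omega)}$, using $\mbf{w} \in {}_{1}\mbf{H}^{k}(\Omega)$ and $p \in {}_{1}H^{k}(\Omega)$. For the stress term I would split $\| (\sigtens - \tautens_{h}, \sigma - \tau_{h}) \|_{\bSigma}^{2}$ into the three pieces of the $\bSigma$-norm: the $_{1}\Ltens$-norm of $\sigtens - \tilde{\pmb{\rho}}_{h}\sigtens$, the $_{1}L^{2}$-norm of $\sigma - \Lambda^{k}_{h}\sigma$, and the $_{1}\mbf{L}^{2}$-norm of $\axidiv(\sigtens - \tilde{\pmb{\rho}}_{h}\sigtens, \sigma - \Lambda^{k}_{h}\sigma)$. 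The first two are controlled by \eqref{eq:bdm_tensor_interpolation_bound} and \eqref{eq:pressure_type_bound} respectively (the $_{1}\Ltens$-piece even gives $h^{k}$ from the $\Htens(\text{div})$-interpolation bound, since $\| \cdot \|_{{}_{1}\Ltens} \le \| \cdot \|_{{}_{1}\Htens(\text{div})}$). The divergence piece is where I would be careful: writing $\axidiv(\sigtens - \tilde{\pmb{\rho}}_{h}\sigtens, \sigma - \Lambda^{k}_{h}\sigma)_{i} = \axidiv(\sigtens_{i} - \tilde{\rho}_{h}\sigtens_{i}) \mp \frac{1}{r}(\sigma - \Lambda^{k}_{h}\sigma)$ on the appropriate row, I bound the first summand by the row-wise $\mbf{BDM}_{k}$ divergence estimate $\| \axidiv \sigtens_{i} - \axidiv\tilde{\rho}_{h}\sigtens_{i} \|_{{}_{1}L^{2}} \le C h^{k}$ (from \eqref{eq:axi_bdm_h_bound_2}, under the stated smoothness/boundedness assumption on $\axidiv\tilde{\pmb{\rho}}_{h}\sigtens$), and the second by $\| \frac{1}{r}(\sigma - \Lambda^{k}_{h}\sigma) \|_{{}_{1}L^{2}} = \| \sigma - \Lambda^{k}_{h}\sigma \|_{{}_{-1}L^{2}} \le \| \sigma - \Lambda^{k}_{h}\sigma \|_{{}_{1}L^{2}} \cdot (\text{on a bounded domain the } r^{-1} \text{ weight is dominated by } r \text{ only away from the axis})$ — more honestly, this term needs the Cl\'ement bound in the weighted $_{-1}L^{2}$ norm, which \cite{BelhachmiEtAl} provides, giving again $h^{k}$.

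The main obstacle is precisely the handling of the $\frac{1}{r}$-weighted divergence contribution: unlike the Cartesian case, $\axidiv$ carries the radial scaling, so one must verify that the chosen interpolants of $\sigma$ (and of the $\tau_{rz}$-type entries that feed into $\tau_{\theta\theta}$ differences) are accurate in the $_{-1}L^{2}$ norm, not merely in $_{1}L^{2}$. I would invoke the Cl\'ement-operator bounds of \cite{BelhachmiEtAl} in the weighted spaces (the corollary cited for \eqref{eq:pressure_type_bound} is stated there for the axisymmetric weights, and an analogous $_{-1}L^{2}$ version holds) to close this gap, and similarly rely on the axisymmetric $\mbf{BDM}_{k}$ interpolation theory of \cite{Ervin1} for the tensor divergence bound. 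Once every infimum is bounded by $C h^{k}$ with the appropriate seminorm of the exact solution as the hidden constant, summing the three contributions and applying Corollary \ref{thm:main_convergence_theorem} yields \eqref{eq:interp_error_bound}. I should also note at the outset that the hypothesis of the corollary — that $\bPi_{h}$ from Lemma \ref{lem:bdm1_projection} or \ref{lem:BDM2_projection_stable} satisfies \eqref{eq:projection_bound_1}-\eqref{eq:projection_bound_2} — is exactly what makes $\bSigma_{h} \times U_{h} \times Q_{h}$ inf-sup stable via Theorem \ref{lem:thm_41}, so that Corollary \ref{thm:main_convergence_theorem} applies in the first place.
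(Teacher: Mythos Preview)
Your proposal is correct and follows essentially the same route as the paper: invoke the quasi-optimality estimate of Corollary~\ref{thm:main_convergence_theorem} and bound each infimum by choosing the axisymmetric $\mbf{BDM}_{k}$ interpolant for $\sigtens$ and Cl\'ement interpolants for $\sigma$, $\mbf{w}$, and $p$, then apply \eqref{eq:bdm_tensor_interpolation_bound} and \eqref{eq:pressure_type_bound}. The paper's own argument is considerably terser---it simply cites \eqref{eq:bdm_tensor_interpolation_bound} and \eqref{eq:pressure_type_bound} to assert $\inf_{(\tautens_{h},\tau_{h})}\|(\sigtens-\tautens_{h},\sigma-\tau_{h})\|_{\bSigma}\le Ch^{k}$ without unpacking the $\frac{1}{r}(\sigma-\tau_{h})$ contribution to the divergence piece---so your explicit identification of the $_{-1}L^{2}$ weight as the place requiring care is, if anything, more thorough than what the paper records.
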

\begin{proof}
From Corollary \ref{thm:main_convergence_theorem},
\begin{align}
\label{eq:corollary_restate}
\begin{split}
&\| (\sigtens - \sigtens_{h}, \sigma - \sigma_{h}) \|_{\bSigma} + \| \mbf{u} - \mbf{u}_{h} \|_{U} + \| p - p_{h} \|_{Q} \\
&\leq C  \Big( \inf_{(\tautens_{h}, \tau_{h}) \in \bSigma_{h} } 
\| (\sigtens - \tautens_{h}, \sigma - \tau_{h}) \|_{\bSigma}  
+ \inf_{\mbf{v}_{h} \in U_{h}} \| \mbf{u} - \mbf{v_{h}} \|_{U} + \inf_{q_{h} \in Q_{h}} \|p - q_{h} \|_{Q} \Big) \, .
\end{split}
\end{align}

The \textbf{BDM} error bounds from \eqref{eq:bdm_tensor_interpolation_bound}, \eqref{eq:pressure_type_bound} gives 
\begin{align}
\label{eq:h_bound_sig}
\inf_{(\tautens_{h}, \tau_{h}) \in \Sigma_{h} \times S_{h}} \| (\sigtens - \tautens_{h}, \sigma - \tau_{h}) \|_{\bSigma} \leq C \;  h^{k}.
\end{align}

In addition, using a vector generalization of \eqref{eq:pressure_type_bound}
\begin{align}
\label{eq:h_bound_u}
\inf_{\mbf{v}_{h} \in U_{h}} \| \mbf{u} - \mbf{v}_{h} \|_{U} &\leq \| \mbf{u} - \Lambda^{k-1}_{h} \mbf{u} \|_{{}_{1}L^{2}(\Omega)} 
\leq C \; h^{k} | \mbf{u} |_{{}_{1}\mbf{H}^{k}(\Omega)}  \, ,  \\
\mbox{and } \ \ 
\label{eq:h_bound_skew}
 \inf_{q_{h} \in W_{h}} \| \mathcal{S}^{2} (p - q_{h}) \|_{Q} &\leq C_{1} \| p - \Lambda^{k}_{h} \; p \|_{{}_{1}L^{2}(\Omega)} 
 \leq C \; h^{k} |p |_{{}_{1}H^{k}(\Omega)}.
\end{align}

Combining \eqref{eq:corollary_restate}, \eqref{eq:h_bound_sig}, \eqref{eq:h_bound_u} and \eqref{eq:h_bound_skew} gives the result.
\end{proof}

To conclude this section, we establish an error bound for the true displacement $\mbf{u}$.  At this point, error bounds have 
been established in terms of the pseudo displacement variable $\mbf{w}$.  Recall from 
Section \ref{sec:axisymmetric_form}, however, that $\mbf{w} = \mbf{u} - \mbf{x}^{\perp} p$.
\begin{corollary}  \label{err4tru}
Let $((\sigtens, \sigma), \mbf{w}, p) \in \bSigma \times U \times Q$ be the solution 
of \eqref{eq:axi_saddle_point_1_mod}-\eqref{eq:axi_saddle_point_3_mod} and 
$((\sigtens_{h}, \sigma_{h}), \mbf{w}_{h}, p_{h}) \in \bSigma_{h} \times U_{h} \times Q_{h}$ 
the solution of \eqref{eq:axi_saddle_point_1_mod_discrete}-\eqref{eq:axi_saddle_point_3_mod_discrete}.  
Furthermore, let $\mbf{u} = \mbf{w} + \mbf{x}^{\perp}p$ denote the true displacement, and 
$\mbf{u}_{h} = \mbf{w}_{h} + \mbf{x}^{\perp} p_{h}$ denote the discrete approximation to the true displacement.  
There exists a $C>0$ independent of $h$, such that
\begin{align*}
\begin{split}
\| \mbf{u} - \mbf{u}_{h} \|_{U} &\leq C  \Big( \inf_{(\tautens_{h}, \tau_{h}) \in \bSigma_{h}} 
\| (\sigtens - \tautens_{h}, \sigma - \tau_{h}) \|_{\bSigma}  \\ \quad \quad
& \quad \quad \quad  \quad \quad \quad 
       + \inf_{\mbf{v}_{h} \in U_{h}} \| \mbf{w} - \mbf{v_{h}} \|_{U} + \inf_{q_{h} \in Q_{h}} \| p - q_{h} \|_{Q} \Big).
\end{split}
\end{align*}
\end{corollary}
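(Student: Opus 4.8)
The plan is to exploit the relation $\mbf{u} - \mbf{u}_{h} = (\mbf{w} - \mbf{w}_{h}) + \mbf{x}^{\perp}(p - p_{h})$ and bound each of the two pieces separately using results already in hand. First I would apply the triangle inequality in the $U$-norm:
\begin{align*}
\| \mbf{u} - \mbf{u}_{h} \|_{U} \leq \| \mbf{w} - \mbf{w}_{h} \|_{U} + \| \mbf{x}^{\perp}(p - p_{h}) \|_{U}.
\end{align*}
The first term is controlled directly by Theorem \ref{lem:displacement_bound}, which already gives the right-hand side appearing in the statement. So the entire task reduces to estimating $\| \mbf{x}^{\perp}(p - p_{h}) \|_{U}$ by the same quantities.

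For the second term, the key observation is that on the bounded meridian domain $\Omega$ the function $\mbf{x}^{\perp} = (z, -r)^{t}$ has components bounded by a constant depending only on $\Omega$; more precisely $|\mbf{x}^{\perp}|^{2} = r^{2} + z^{2} \leq C_{\Omega}$ on $\Omega$. Hence, writing out the weighted $U = {}_{1}\mbf{L}^{2}(\Omega)$ norm,
\begin{align*}
\| \mbf{x}^{\perp}(p - p_{h}) \|_{U}^{2} = \int_{\Omega} (r^{2} + z^{2})(p - p_{h})^{2}\, r\, dr\, dz \leq C_{\Omega} \int_{\Omega} (p - p_{h})^{2}\, r\, dr\, dz = C_{\Omega} \| p - p_{h} \|_{Q}^{2}.
\end{align*}
Thus $\| \mbf{x}^{\perp}(p - p_{h}) \|_{U} \leq C \| p - p_{h} \|_{Q}$, and the latter is again bounded by the right-hand side of Theorem \ref{lem:displacement_bound} (it is one of the terms on the left there). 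Combining the two bounds and absorbing constants yields the claimed inequality.

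I do not expect a serious obstacle here; the only point requiring a moment of care is the boundedness of $|\mbf{x}^{\perp}|$ on $\Omega$, which rests on the standing assumption that $\brOmega$ (hence $\Omega$) is bounded — this is implicit in the Lipschitz/polyhedral hypotheses used throughout the paper. One should also note that the post-processing $\mbf{u}_{h} = \mbf{w}_{h} + \mbf{x}^{\perp} p_{h}$ is well defined as an element of $U$ for the same reason: multiplication by the bounded factor $\mbf{x}^{\perp}$ maps ${}_{1}L^{2}(\Omega)$ into ${}_{1}L^{2}(\Omega)$. With those remarks in place the proof is essentially a two-line consequence of Theorem \ref{lem:displacement_bound}.
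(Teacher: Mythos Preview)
Your proposal is correct and follows essentially the same approach as the paper: the paper's proof also uses the triangle inequality on $\mbf{u}-\mbf{u}_h=(\mbf{w}-\mbf{w}_h)+\mbf{x}^{\perp}(p-p_h)$, observes that boundedness of $\Omega$ gives $\|\mbf{x}^{\perp}(p-p_h)\|_{U}\le C_{\mbf{x}^{\perp}}\|p-p_h\|_{Q}$, and then invokes Theorem~\ref{lem:displacement_bound}. Your explicit computation of the weighted norm is a bit more detailed than the paper's one-line remark, but the argument is the same.
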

\begin{proof}
For a bounded domain $\Omega$, observe that
\begin{align*}
\| \mbf{x}^{\perp} (p - p_{h}) \|_{U} \leq C_{\mbf{x}^{\perp}} \| p - p_{h} \|_{Q},
\end{align*}
where the constant $C_{\mbf{x}^{\perp}} > 0$ is independent of $h$.  
Therefore, using Theorem \ref{lem:displacement_bound} we have that  
\begin{align*}
\begin{split}
\| \mbf{u} - \mbf{u}_{h} \|_{U} &= \| (\mbf{w} - \mbf{w}_{h}) + \mbf{x}^{\perp} (p - p_{h}) \|_{U} \leq \|\mbf{w} - \mbf{w}_{h} \|_{U} + \| \mbf{x}^{\perp} (p - p_{h}) \|_{U} \\
& \leq C  \Big( \inf_{(\tautens_{h}, \tau_{h}) \in \bSigma_{h}} \| (\sigtens - \tautens_{h}, \sigma - \tau_{h}) \|_{\bSigma}  + \inf_{\mbf{v}_{h} \in U_{h}} \| \mbf{u} - \mbf{v_{h}} \|_{U} \\
&\quad \quad \quad \quad \quad + \inf_{q_{h} \in Q_{h}} \| p - q_{h} \|_{Q} \Big).
\end{split}
\end{align*}
\end{proof}

%
\setcounter{equation}{0}
\setcounter{figure}{0}
\setcounter{table}{0}
\setcounter{theorem}{0}
\setcounter{lemma}{0}
\setcounter{corollary}{0}
\setcounter{definition}{0}
\section{Numerical Experiments}
\label{sec:computational_results}
In this section we present two numerical experiments to investigate our theoretical results. 
For both experiments we consider $\Omega \, = \, (0 , 1) \times (0 , 1)$,
and compute approximations using the approximation elements 
$((\textbf{BDM}_{1}(\mcT_{h}))^{2} \times P_{1}(\mcT_{h})) \times (P_{0}(\mcT_{h}))^{2} \times P_{0}(\mcT_{h})$ 
(shown in Table \ref{fig:elasticity_bdm1_example2} and Table \ref{fig:elasticity_bdm1_exampler2}), and
$((\textbf{BDM}_{2}(\mcT_{h}))^{2} \times P_{2}(\mcT_{h})) 
\times (P_{1}(\mcT_{h}))^{2} \times P_{1}(\mcT_{h})$ 
(shown in Table \ref{fig:elasticity_bdm2_example2} and Table \ref{fig:elasticity_bdm2_exampler2}). 
For both experiments, the value for the grad-div parameter (see \eqref{eq:axi_a_grad_div}) used
was $\gamma = 1$, and the values for the Lam\'{e} constants were $\mu = 1/2$ and $\lambda = 1$. 

\subsubsection*{Experiment 1}
For Experiment 1 the displacement solution was taken to be
\begin{align}
\mbf{u}(r,z) = \begin{pmatrix} 4r^{3}(1-r)z(1-z) \\ -4r^{3}(1-r)z(1-z) \end{pmatrix}.
\end{align}

Correspondingly, the true symmetric stress tensor is
\begin{scriptsize}
\begin{align}
\sigtens &= \begin{pmatrix} 4r^{2}(-2r^{2}z+r^{2}+9rz^{2}-7rz-r-7z^{2}+7z) &  2r^{2}(2r^{2}z-r^{2}-4rz^{2}+2rz+r+3z^{2}-3z) \\ 
2r^{2}(2r^{2}z-r^{2}-4rz^{2}+2rz+r+3z^{2}-3z) & 4r^{2}(-4r^{2}z + 2r^{2} + 5rz^{2}-rz-2r-4z^{2}+4z) \end{pmatrix} \\
\sigma & = 4r^{2}(-2r^{2}z + r^{2} + 6 r z^{2} - 4rz -r - 5z^{2} + 5z)
\end{align}
\end{scriptsize}
and the divergence of the stress tensor is
\begin{align}
\axidiv (\sigtens, \sigma) = \begin{pmatrix} 2r(2r^{3}-24r^{2}z+10r^{2}+60rz^{2}-42rz-9r-32z^{2}+32z) \\ -2r(8r^{3}+r^{2}(7-30z) + 4r(4z^{2}+2z-3)-9(z-1)z) \end{pmatrix}.
\end{align}

The solution was chosen to be consistent with homogenous Dirichlet conditions while having a sufficiently
high order polynomial degree to investigate the orders of convergence.  

Presented in Table \ref{fig:elasticity_bdm1_example2}-\ref{fig:elasticity_bdm2_example2} are the results of the 
simulation.  We note that the convergence rate for the displacement reflects the true displacement,  $\| \mbf{u} - \mbf{u}_{h} \|_{U}$.

\begin{table}[t]
\centering
\caption{Experiment 1: Convergence rates for $(\textbf{BDM}_{1}(\mcT_{h}))^{2} \times P_{1}(\mcT_{h})) \times (P_{0}(\mcT_{h}))^{2} 
\times P_{0}(\mcT_{h})$
finite elements with grad-div 
stabilization parameter $\gamma = 1$.}
\label{fig:elasticity_bdm1_example2}
\tabulinesep=1.15mm
\small
\begin{tabu}{|c|c|c|c|c|c|c|}
\hline
$h$ & $\|(\sigtens , \sigma) - (\sigtens_{h} , \sigma_{h}) \|_{\Sigma}$ & Cvg. Rate & $\| \mbf{u} - \mbf{u}_{h} \|_{U}$ & Cvg. Rate & $\|\text{as}(\sigtens - \sigtens_{h}) \|_{Q}$ & Cvg. Rate \\
\hline
$\frac{1}{4}$	&	1.273E+00	&	1.0	&	2.908E-02	&	1.0	&	1.912E-01	&	1.1	\\	\hline
$\frac{1}{6}$	&	8.444E-01	&	1.0	&	1.911E-02	&	1.1	&	1.200E-01	&	1.1	\\	\hline
$\frac{1}{8}$	&	6.308E-01	&	1.0	&	1.410E-02	&	1.0	&	8.636E-02	&	1.1	\\	\hline
$\frac{1}{10}$	&	5.034E-01	&	1.0	&	1.115E-02	&	1.0	&	6.727E-02	&	1.1	\\	\hline
$\frac{1}{12}$	&	4.189E-01	&	--	&	9.227E-03	&	--	&	5.508E-02	&	--	\\	\hline
Pred.               &              &  1.0  &               &  1.0 &               &   1.0 \\ \hline
\end{tabu}
\end{table}

\begin{table}[t]
\centering
\caption{Experiment 1: Convergence rates for $(\textbf{BDM}_{2}(\mcT_{h}))^{2} \times P_{2}(\mcT_{h})) 
\times (P_{1}(\mcT_{h}))^{2} \times P_{1}(\mcT_{h})$  
finite elements with grad-div 
stabilization parameter $\gamma = 1$.}
\label{fig:elasticity_bdm2_example2}
\tabulinesep=1.15mm
\small
\begin{tabu}{|c|c|c|c|c|c|c|}
\hline
$h$ & $\| (\sigtens , \sigma) - (\sigtens_{h} , \sigma_{h}) \|_{\Sigma}$ & Cvg. Rate & $\| \mbf{u} - \mbf{u}_{h} \|_{U}$ & Cvg. Rate & $\|\text{as}(\sigtens - \sigtens_{h}) \|_{Q}$ & Cvg. Rate \\
\hline
$\frac{1}{4}$	&	6.797E-02	&	2.0	&	8.381E-03	&	1.9	&	1.602E-02	&	2.1	\\	\hline
$\frac{1}{6}$	&	3.061E-02	&	2.0	&	3.915E-03	&	1.9	&	6.753E-03	&	2.1	\\	\hline
$\frac{1}{8}$	&	1.730E-02	&	2.0	&	2.238E-03	&	2.0	&	3.647E-03	&	2.1	\\	\hline
$\frac{1}{10}$	&	1.109E-02	&	2.0	&	1.442E-03	&	2.0	&	2.264E-03	&	2.1	\\	\hline
$\frac{1}{12}$	&	7.711E-03	&	--	&	1.005E-03	&	--	&	1.536E-03	&	--	\\	\hline
Pred.               &              &  2.0  &               &  2.0 &               &   2.0 \\ \hline
\end{tabu}
\end{table}

\subsubsection*{Experiment 2}
For this numerical experiment, we considered the displacement solution
\begin{align}
\mbf{u}(r,z) = \begin{pmatrix} r^{3} \sin(r \pi) \cos((z-0.5) \pi) \\ -r^{3} \sin(r \pi) \cos((z-0.5) \pi) \end{pmatrix}.
\end{align}
This solution was selected to be consistent with homogenous Dirichlet conditions while also providing a non-polynomial validation example.  Based on $\mbf{u}$, the true solution for $\sigtens$ was determined from the relationship
\begin{align}
\label{eq:tensor_displacment_relationship}
\mathcal{A} \sigtens = \epsilon (\mbf{u}) \quad \text{   where   } \quad \mathcal{A} \sigtens = \dfrac{1}{2 \mu} \left( \sigtens - \dfrac{\lambda}{2 \mu + 3 \lambda} \text{tr}( \sigtens) \right) \, .
\end{align}
For brevity, the expressions for $(\sigtens, \sigma)$ and $\axidiv (\sigtens, \sigma)$ are omitted here.\\

The results of the simulations are presented in Tables \ref{fig:elasticity_bdm1_exampler2} and \ref{fig:elasticity_bdm2_exampler2}.

\begin{table}[t]
\centering
\caption{Experiment 2: Convergence Rates for $(\mathbf{BDM}_{1} (\mathcal{T}_{h}))^{2} \times P_{1}(\mathcal{T}_{h}) \times (P_{0}(\mathcal{T}_{h}))^{2} \times P_{0}(\mathcal{T}_{h})$ finite elements with grad-div stabilization parameter $\gamma = 1$.}
\label{fig:elasticity_bdm1_exampler2}
\tabulinesep=1.15mm
\small
\begin{tabu}{|c|c|c|c|c|c|c|}
\hline
$h$ & $\| (\sigtens, \sigma) - (\sigtens_{h}, \sigma_{h}) \|_{\Sigma}$ & Cvg. Rate & $\| \mbf{u} - \mbf{u}_{h} \|_{U}$ & Cvg. Rate & $\|\text{as}(\sigtens - \sigtens_{h}) \|_{Q}$ & Cvg. Rate \\
\hline
$\frac{1}{4}$	&	3.235E+00	&	1.0	&	8.675E-02	&	1.1	&	6.103E-01	&	1.1	\\	\hline
$\frac{1}{6}$	&	2.136E+00	&	1.0	&	5.619E-02	&	1.1	&	3.862E-01	&	1.1	\\	\hline
$\frac{1}{8}$	&	1.596E+00	&	1.0	&	4.111E-02	&	1.1	&	2.811E-01	&	1.1	\\	\hline
$\frac{1}{10}$	&	1.275E+00	&	1.0	&	3.239E-02	&	1.1	&	2.209E-01	&	1.1	\\	\hline
$\frac{1}{12}$	&	1.062E+00	&	--	&	2.674E-02	&	--	&	1.821E-01	&	--	\\	\hline
Pred.               &              &  1.0  &               &  1.0 &               &   1.0 \\ \hline
\end{tabu}
\end{table}

\begin{table}[t]
\centering
\caption{Experiment 2: Convergence Rates for $(\mathbf{BDM}_{2} (\mathcal{T}_{h}))^{2} \times P_{2}(\mathcal{T}_{h}) \times (P_{1}(\mathcal{T}_{h}))^{2} \times P_{1}(\mathcal{T}_{h})$ finite elements with grad-div stabilization parameter $\gamma = 1$.}
\label{fig:elasticity_bdm2_exampler2}
\tabulinesep=1.15mm
\small
\begin{tabu}{|c|c|c|c|c|c|c|}
\hline
$h$ & $\| (\sigtens, \sigma) - (\sigtens_{h}, \sigma_{h}) \|_{\Sigma}$ & Cvg. Rate & $\| \mbf{u} - \mbf{u}_{h} \|_{U}$ & Cvg. Rate & $\|\text{as}(\sigtens - \sigtens_{h}) \|_{Q}$ & Cvg. Rate \\
\hline
$\frac{1}{4}$	&	4.291E-01	&	1.9	&	2.720E-02	&	1.9	&	6.467E-02	&	2.1	\\	\hline
$\frac{1}{6}$	&	1.966E-01	&	2.0	&	1.243E-02	&	2.0	&	2.816E-02	&	2.1	\\	\hline
$\frac{1}{8}$	&	1.119E-01	&	2.0	&	7.036E-03	&	2.0	&	1.556E-02	&	2.1	\\	\hline
$\frac{1}{10}$	&	7.208E-02	&	2.0	&	4.514E-03	&	2.0	&	9.825E-03	&	2.1	\\	\hline
$\frac{1}{12}$	&	5.023E-02	&	--	&	3.138E-03	&	--	&	6.752E-03	&	--	\\	\hline
Pred.               &              &  2.0  &               &  2.0 &               &   2.0 \\ \hline
\end{tabu}
\end{table}

The computational results are consistent with the theoretically predicted results from 
Corollaries \ref{cor:interpolation_error_bound_k12} and \ref{err4tru}.


%
\section{Conclusion}
\label{sec:elasticity_discussion}

We have developed a computational framework for the axisymmetric linear elasticity problem with weak symmetry.  
Provided the projection bounds \eqref{eq:projection_bound_1}-\eqref{eq:projection_bound_2} are satisfied, Lemmas
\ref{lem:bdm1_projection} and  \ref{lem:BDM2_projection_stable} establish that the 
finite element spaces 
$(\textbf{BDM}_{1}(\mcT_{h}))^{2} \times P_{1}(\mcT_{h})) \times (P_{0}(\mcT_{h}))^{2} \times P_{0}(\mcT_{h})$ and
$(\textbf{BDM}_{2}(\mcT_{h}))^{2} \times P_{2}(\mcT_{h})) 
\times (P_{1}(\mcT_{h}))^{2} \times P_{1}(\mcT_{h})$ 
are inf-sup stable, resulting in approximations satisfying the error bounds stated in 
Corollary \ref{cor:interpolation_error_bound_k12}.  
Computational  presented in Section \ref{sec:computational_results} 
support these results.

It is an open question if for $k \ge 3$,  $(\textbf{BDM}_{k}(\mcT_{h}))^{2} \times P_{k}(\mcT_{h})) 
\times (P_{k-1}(\mcT_{h}))^{2} \times P_{k-1}(\mcT_{h})$ form 
an inf-sup stable set of  approximation spaces for this problem.  

In the Cartesian setting, the spaces 
$(\mbf{BDM}_{k}(\mcT_{h}))^{2} \times (P_{k-1}(\mcT_{h}))^{2} \times P_{k-1}(\mcT_{h})$ form an inf-sup stable
set of approximation spaces 
for the linear elasticity problem with weak symmetry \cite{BBF1}.  
Therefore, it is reasonable to conjecture that 
$(\mbf{BDM}_{k}(\mcT_{h}))^{2} \times P_{k}(\mcT_{h})) \times (P_{k-1}(\mcT_{h}))^{2} \times P_{k-1}(\mcT_{h})$ 
are inf-sup stable for the axisymmetric problem.
To test this conjecture, Tables \ref{fig:elasticity_bdm3_example2} and \ref{fig:elasticity_bdm3_exampler2} present convergence results for 
$(\mbf{BDM}_{3}(\mcT_{h}))^{2} \times P_{3}(\mcT_{h})) \times (P_{2}(\mcT_{h}) )^{2} \times P_{2}(\mcT_{h})$ for the 
numerical experiments described in 
Section \ref{sec:computational_results}. For these experiments the approximations
converge with convergence rate $O(h^{k}) \, = \, O(h^{3})$. \\

\begin{table}[t]
\centering
\caption{Experiment 1: Convergence rates for $(\mbf{BDM}_{3}(\mcT_{h}))^{2} \times P_{3}(\mcT_{h})) \times (P_{2}(\mcT_{h}) )^{2} 
\times P_{2}(\mcT_{h})$ 
finite elements with $\gamma = 1$.}
\label{fig:elasticity_bdm3_example2}
\tabulinesep=1.15mm
\small
\begin{tabu}{|c|c|c|c|c|c|c|}
\hline
$h$ & $\| (\sigtens , \sigma) - (\sigtens_{h} , \sigma_{h}) \|_{\Sigma}$ & Cvg. Rate & $\| \mbf{u} - \mbf{u}_{h} \|_{U}$ & Cvg. Rate & $\|\text{as}(\sigtens - \sigtens_{h}) \|_{Q}$ & Cvg. Rate \\
\hline
$\frac{1}{4}$	&	1.155E-02	&	3.0	&	1.359E-03	&	2.9	&	1.454E-03	&	3.1	\\	\hline
$\frac{1}{6}$	&	3.459E-03	&	3.0	&	4.233E-04	&	2.9	&	4.192E-04	&	3.1	\\	\hline
$\frac{1}{8}$	&	1.465E-03	&	3.0	&	1.816E-04	&	3.0	&	1.733E-04	&	3.1	\\	\hline
$\frac{1}{10}$	&	7.517E-04	&	3.0	&	9.370E-05	&	3.0	&	8.742E-05	&	3.1	\\	\hline
$\frac{1}{12}$	&	4.356E-04	&	--	&	5.445E-05	&	--	&	5.002E-05	&	--	\\	\hline
\end{tabu}
\end{table}

\begin{table}[t]
\centering
\caption{Example 2: Convergence Rates for $(\mathbf{BDM}_{3} (\mathcal{T}_{h}))^{2} \times P_{3}(\mathcal{T}_{h}) \times (P_{2}(\mathcal{T}_{h}))^{2} \times P_{2}(\mathcal{T}_{h})$ finite elements with grad-div stabilization parameter $\gamma = 1$.}
\label{fig:elasticity_bdm3_exampler2}
\tabulinesep=1.15mm
\small
\begin{tabu}{|c|c|c|c|c|c|c|}
\hline
$h$ & $\| (\sigtens, \sigma) - (\sigtens_{h}, \sigma_{h}) \|_{\Sigma}$ & Cvg. Rate & $\| \mbf{u} - \mbf{u}_{h} \|_{U}$ & Cvg. Rate & $\|\text{as}(\sigtens - \sigtens_{h}) \|_{Q}$ & Cvg. Rate \\

\hline
$\frac{1}{4}$	&	5.470E-02	&	2.9	&	4.610E-03	&	2.9	&	7.849E-03	&	3.0	\\	\hline
$\frac{1}{6}$	&	1.658E-02	&	3.0	&	1.422E-03	&	3.0	&	2.355E-03	&	3.0	\\	\hline
$\frac{1}{8}$	&	7.046E-03	&	3.0	&	6.085E-04	&	3.0	&	9.937E-04	&	3.0	\\	\hline
$\frac{1}{10}$	&	3.620E-03	&	3.0	&	3.136E-04	&	3.0	&	5.082E-04	&	3.0	\\	\hline
$\frac{1}{12}$	&	2.098E-03	&	--	&	1.821E-04	&	--	&	2.937E-04	&	--	\\	\hline
\end{tabu}
\end{table}

{\Large \textbf{Acknowledgement}}: The authors thankfully acknowledge helpful discussions with Professors Jason Howell and 
Hengguang Li.

%
 \setcounter{equation}{0}
\setcounter{figure}{0}
\setcounter{table}{0}
\setcounter{theorem}{0}
\setcounter{lemma}{0}
\setcounter{corollary}{0}
\setcounter{definition}{0}
\appendix

\section{Proof of Lemma \ref{lem:c_operator_result}}
\label{apxpLm1}
Let $\mbf{x} = (r,z)$.  Then
\begin{align*}
\axidiv (\tautens \wedge \mbf{x}) &= \axidiv \begin{pmatrix}  \tau_{11}  z-  \tau_{21} r \\
  \tau_{12} z-  \tau_{22} r  \end{pmatrix} \\
& = \dfrac{\partial}{\partial r} ( \tau_{11} z -  \tau_{21} r) + \dfrac{\partial}{\partial z} (   \tau_{12} z -  \tau_{22} r) + \dfrac{1}{r} (  \tau_{11} z -  \tau_{21} r)   \\
& = z \dfrac{\partial \tau_{11}}{\partial r} - \tau_{21}  - r \dfrac{\partial \tau_{21}}{\partial r} + \tau_{12} + z \dfrac{\partial \tau_{12}}{\partial z} - r \dfrac{\partial \tau_{22}}{\partial z}   + \dfrac{z}{r} \tau_{11}  - \tau_{21}\\
& = z(\dfrac{\partial \tau_{11}}{\partial r}  + \dfrac{\partial \tau_{12}}{\partial z} + \dfrac{1}{r} \tau_{11}) - r (\dfrac{\partial \tau_{21}}{\partial r} + \dfrac{\partial \tau_{22}}{\partial z} + \frac{1}{r} \tau_{21}) + \tau_{12} - \tau_{21}\\
& = (\axidiv \begin{pmatrix} \tau_{11} & \tau_{12} \\ \tau_{21} & \tau_{22} \end{pmatrix}) \wedge \begin{pmatrix} r \\ z\end{pmatrix} +  \begin{pmatrix} \tau_{11} & \tau_{12} \\ \tau_{21} & \tau_{22} \end{pmatrix} : \begin{pmatrix} 0 & 1 \\ -1 & 0 \end{pmatrix} \\
& = (\axidiv \tautens) \wedge \mbf{x} +  \tautens : \mathbb{P}, \stepcounter{equation}\tag{\theequation}\label{eq:exterior_result_1}
\end{align*}
where $\mathbb{P} = \begin{pmatrix} 0 & 1 \\ -1 & 0 \end{pmatrix}$. 
Therefore,
\begin{align*}
\axidiv(\tautens \wedge \mbf{x}) - \dfrac{z}{r} \tau = \axidiv(\tautens, \tau) \wedge \mbf{x} + \tautens : \mathbb{P}.
\end{align*}
Next we multiply the left and right hand sides of (\ref{eq:exterior_result_1}) by $p \; r$ and integrate over $T$ to yield
\begin{align}
\label{eq:projection_result_1}
\int_{T} \axidiv (\tautens \wedge \mbf{x} ) \; p \; r \; d T - \int_{T} z \; \tau \; p \; d T &= \int_{T} (\axidiv (\tautens, \tau) ) \wedge \mbf{x} \; p \; r \; dT + \int_{T} \tautens : \mathbb{P} \; p \; r \; dT \nonumber \\
&= ( (\axidiv (\tautens, \tau)) \wedge \mbf{x}, p)_{T} + ( as(\tautens) , \mathcal{S}^{2}(p))_{T} \nonumber \\
& = c( (\tautens, \tau), p)_{T} \ \ \mbox{(see \eqref{cres2T})}.
\end{align}
Note that we have used the relationship $\tautens : \mathbb{P} \; p  = as(\tautens) : \mathcal{S}^{2}(p)$.
Next, applying integration by parts to the first term on the left-hand side of \eqref{eq:projection_result_1} gives
\begin{align}
\label{eq:IBP}
\int_{T} \axidiv (\tautens \wedge \mbf{x} ) \; p \; r \; dT  &= \int_{T} \nabla \cdot (r \; \tautens \wedge \mbf{x}) \; p \; dT \nonumber \\
& = \int_{\partial T} (\tautens \wedge \mbf{x} ) \cdot \mbf{n} \; p \; r \; \partial s - \int_{T} (\tautens \wedge \mbf{x} ) \cdot \nabla  p \, \; r \; d T.
\end{align}
Then combining \eqref{eq:projection_result_1}, and \eqref{eq:IBP} yields
\begin{align*}
c(\tautens, p) = \int_{\partial T} (\tautens \wedge \mbf{x} ) \cdot \mbf{n} \; p \; r \;ds - \int_{T} (\tautens \wedge \mbf{x} ) \cdot \nabla p \, \; r \; dT - \int_{T} \tau \; z\; p \; dT. 
\end{align*}
Finally, since
\begin{align*}
(\tautens \wedge \mbf{x} ) \cdot \nabla p &= \tautens : (\mbf{x}^{\perp} \otimes \nabla p)  \text{ and }
(\tautens \wedge \mbf{x} ) \cdot \mbf{n} = (\tautens \cdot \mbf{n}) \cdot \mbf{x}^{\perp} ,
\end{align*}
we have
\begin{align*}
c((\tautens, \tau), p)_{T}= \int_{\partial T} (\tautens \cdot \mbf{n}) \cdot \mbf{x}^{\perp} \; p \; r \;ds - \int_{T} \tautens : (\mbf{x}^{\perp} \otimes \nabla p) \; r \; dT - \int_{T} \tau \; z \; p \; dT.
\end{align*}
\mbox{ } \hfill \qed


%
 \setcounter{equation}{0}
\setcounter{figure}{0}
\setcounter{table}{0}
\setcounter{theorem}{0}
\setcounter{lemma}{0}
\setcounter{corollary}{0}
\setcounter{definition}{0}
\section{Computations on the reference triangle $\widehat{T}$}
\label{sec:mappings}

The reference triangle $\widehat{T}$ is defined as the triangle with vertices $(0,0)$, $(1,0)$ and $(0,1)$. 
\begin{figure}[h]
\center
\includegraphics[scale=0.4]{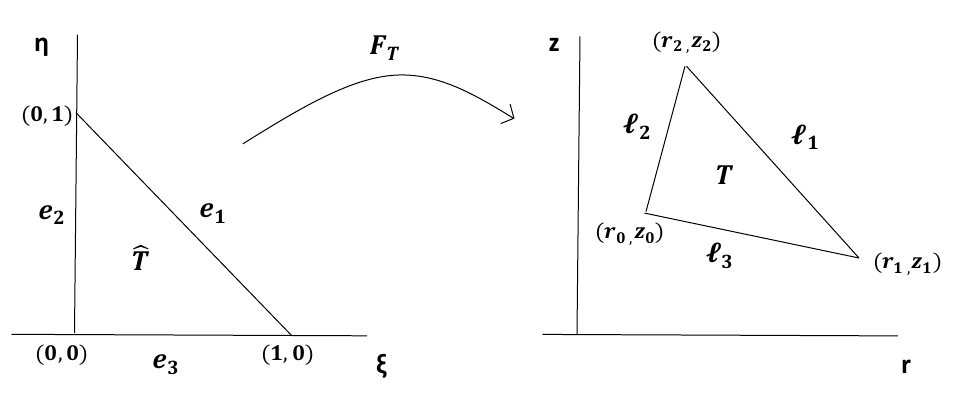}
\caption{Reference Triangle}
\label{fig:affine_map}
\end{figure}
 
Every triangle $T \in \mathcal{T}_{h}$ has three coordinates $(r_{0}, z_{0})$, $(r_{1}, z_{1})$ and $(r_{2},z_{2})$, which
we assume are always labeled in a counter-clockwise manner such that $r_{0} \leq r_{1}, r_{2}$.  
Further, an affine mapping $F_{T}$ from the reference triangle $\widehat{T}$ (see Figure \eqref{fig:affine_map}) to the 
physical domain $T \in \mcT_{h}$ exists and takes the form
\begin{align*}
\begin{pmatrix} r \\ z \end{pmatrix} = \begin{pmatrix} r_{1} - r_{0} & r_{2} - r_{0} \\ z_{1} - z_{0} & z_{2} - z_{0} \end{pmatrix} 
\begin{pmatrix} \xi \\ \eta \end{pmatrix} + \begin{pmatrix} r_{0} \\ z_{0} \end{pmatrix} = 
\begin{pmatrix} r_{10} & r_{20} \\ z_{10} & z_{20} \end{pmatrix} \begin{pmatrix} \xi \\ \eta \end{pmatrix} + 
\begin{pmatrix} r_{0} \\ z_{0} \end{pmatrix}. 
\end{align*}

Observe that we have used the notational short hand $r_{i} - r_{j} = r_{ij}$, and $z_{i} - z_{j} = z_{ij}$.  
Associated with each affine mapping $F_{T}$ is the determinant of the Jacobian matrix $|J_{T}| = |r_{10}z_{20} - z_{10} r_{20}|$. 

Provided that the triangulation $\mathcal{T}_{h}$ is regular, every affine map $F_{T}$ can be expressed as
\begin{align*}
\begin{pmatrix} r \\ z \end{pmatrix} = \begin{pmatrix} r_{10} & r_{20} \\ z_{10} & z_{20} \end{pmatrix} 
\begin{pmatrix} \xi \\ \eta \end{pmatrix} + \begin{pmatrix} r_{0} \\ z_{0} \end{pmatrix} = h 
\begin{pmatrix} c_{10} & c_{20} \\ d_{10} & d_{20} \end{pmatrix} \begin{pmatrix} \xi \\ \eta \end{pmatrix} + 
\begin{pmatrix} r_{0} \\ z_{0} \end{pmatrix},
\end{align*}
where $a_{min} \leq c_{10}, c_{20}, d_{10}, d_{20} \leq a_{max}$.  Furthermore, the determinant of the Jacobian 
is $|J_{T}| = h^{2} ( c_{10} d_{20} - d_{10} c_{20}) = h^{2} J_{D}$ where $0< jd_{min} \leq J_{D} \leq jd_{max}$, for
$jd_{min}, \,  jd_{max} \in \real^{+}$.

For every regular triangulation $\mathcal{T}_{h}$ of an axisymmetric domain $\Omega$ with symmetry axis $\Gamma_{0}$, 
each triangle $T \in \mathcal{T}_{h}$ can be categorized as one of three types:
\begin{itemize}
\item Type I: $\partial T \cap \Gamma_{0} = \mbf{e}^{*}$ where $\mbf{e}^{*}$ denotes an entire edge,
\item Type II: $\partial T \cap \Gamma_{0} = P_{0}$ where $P_{0}$ is a single point,
\item Type III: $\partial T \cap \Gamma_{0} = \emptyset$.
\end{itemize}
For each type of triangle, we can be more specific about the form of the affine mapping $F_{T}$.  In the following, 
$\hat{r}$ and $\hat{z}$ represent the mapping of the variables $r$ and $z$ on the physical element $T$ to the reference 
triangle $\widehat{T}$ as functions of $\xi$ and $\eta$.

If $T$ is Type I, then
\begin{align*}
\hat{r} &= h \; c_{10} \; \xi\\
\hat{z} &=  (z_{0} + h\;d_{10}  \xi + h\;d_{20} \eta)
\end{align*}
and
\begin{align*}
J_{T} = h \begin{pmatrix} c_{10} & 0 \\ d_{10} & d_{20} \end{pmatrix} = h \tilde{J}_{T}.
\end{align*}
Since $c_{20} = 0$, it must be the case that $c_{10} > 0$ to ensure that $T$ is well defined.

If $T$ is Type II, then
\begin{align*}
\hat{r} &= h \; (c_{10} \xi + c_{20} \eta) \\
\hat{z} &= (z_{0} + h\;d_{10}  \xi + h\;d_{20} \eta)
\end{align*}
and
\begin{align*}
J_{T} = h \begin{pmatrix}c_{10} & c_{20} \\ d_{10} & d_{20} \end{pmatrix} = h \tilde{J}_{T}.
\end{align*}
In addition, since only one node lies on the symmetry axis, $c_{10}, c_{20} > 0$.

Finally, if $T$ is Type III, then
\begin{align*}
\hat{r} &= (r_{0}  + h\; c_{10}  \xi + h\;c_{20}  \eta) \\
\hat{z} &= (z_{0} + h\;d_{10} \xi + h\;d_{20}  \eta)
\end{align*}
and
\begin{align*}
J_{T} = h \begin{pmatrix} c_{10} & c_{20} \\ d_{10} & d_{20} \end{pmatrix} = h \tilde{J}_{T}
\end{align*}
where $c_{10}, c_{20} \geq 0$ and $c_{10} + c_{20} > 0$.

In many cases, is it more convenient to work on the reference triangle $\widehat{T}$ than the physical domain $T$.  
However, it is important to recall that when mapping vector functions in ${}_{1}\Htens(\axidiv, \mathbb{M}^{2})$ between $T$ 
and $\widehat{T}$, it is necessary to preserve normal components.  Therefore, rather than using a standard affine mapping, 
we must use the contravariant Piola transformation \cite{Ervin, Bentley}.  Let $J_{T}$ be the Jacobian matrix associated with the 
affine mapping $F_{T}: \widehat{T} \rightarrow T$, then the Piola mapping of the function $\widehat{\mbf{q}}$ 
(defined on the reference triangle) is
\begin{align*}
\mathcal{P}(\widehat{\mbf{q}})(\mbf{x}) := \dfrac{1}{|J_{T}|} J_{T} \; \widehat{\mbf{q}}(\widehat{\mbf{x}}), 
\text{ where } \mbf{x} = F(\widehat{\mbf{x}}).
\end{align*}

The following lemma describes some useful properties of the Piola map as it relates to the integration of 
${}_{1}\Htens(\axidiv, \mathbb{M}^{2})$ functions.
\begin{lemma}
\label{lem:piola_mapping_properties}
Let $\widehat{\tautens}, \widehat{\sigtens} \in {}_{1} \Htens(\axidiv, \widehat{T}; \mathbb{M}^{2})$ and 
$\widehat{\mbf{v}} \in {}_{1} \mbf{L}^{2}(\widehat{T})$, and let $\tautens = \mathcal{P}(\widehat{\tautens})$, 
$\sigtens = \mathcal{P}(\widehat{\sigtens})$, and $\mbf{v} = \widehat{\mbf{v}} \circ F^{-1}$
\begin{align*}
\int_{T} \tautens : \sigtens \; r \; d T &= \int_{\widehat{T}} J_{T} \; \widehat{\tautens}^{t} : J_{T} \; \widehat{\sigtens}^{t} \; 
\dfrac{1}{|J_{T}|} \; \hat{r} \; d \widehat{T} \\
\int_{\partial T} (\tautens \cdot \mbf{n}) \cdot \mbf{v} \; r \; d s &= \int_{\partial \widehat{T}} (\widehat{\tautens} \cdot \mbf{n}) \cdot
 \widehat{\mbf{v}} \; \hat{r} \; d s 
\end{align*}
\end{lemma}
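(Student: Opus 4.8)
\textbf{Proof proposal for Lemma \ref{lem:piola_mapping_properties}.}

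The plan is to verify each identity directly by substituting the definition of the contravariant Piola transformation into the integral over $T$, pulling the change of variables back to $\widehat{T}$, and simplifying using elementary facts about the Jacobian $J_{T}$. For the first identity, I would start from $\tautens = \mathcal{P}(\widehat{\tautens})$ and $\sigtens = \mathcal{P}(\widehat{\sigtens})$, so that on $T$ we have $\tautens(\mbf{x}) = \frac{1}{|J_{T}|} J_{T} \, \widehat{\tautens}(\widehat{\mbf{x}})$ and similarly for $\sigtens$. The double-contraction $\tautens : \sigtens$ is $\operatorname{tr}(\tautens \sigtens^{t})$; substituting the Piola form gives $\frac{1}{|J_{T}|^{2}} \operatorname{tr}\big( J_{T} \widehat{\tautens} \, \widehat{\sigtens}^{t} J_{T}^{t} \big)$, which by cyclicity of the trace equals $\frac{1}{|J_{T}|^{2}} \operatorname{tr}\big( (J_{T}\widehat{\tautens}^{t})^{t} (J_{T}\widehat{\sigtens}^{t}) \big) = \frac{1}{|J_{T}|^{2}} \big( J_{T}\widehat{\tautens}^{t} : J_{T}\widehat{\sigtens}^{t}\big)$ after transposing appropriately. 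Then the change of variables $\mbf{x} = F_{T}(\widehat{\mbf{x}})$ contributes the volume factor $|J_{T}|\, d\widehat{T}$ and converts the weight $r$ into $\hat r$, so one factor of $|J_{T}|$ cancels, leaving exactly the claimed right-hand side. I would be careful to track whether the matrix contraction should be written with $\widehat{\tautens}$ or $\widehat{\tautens}^{t}$; the statement as given uses the transposes, and the cyclic-trace manipulation above produces exactly that form, so I would present the algebra so the transposes appear naturally.

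For the second identity, the key fact is the standard normal-trace property of the Piola transform: if $\mbf{n}$ denotes the outward unit normal on $\partial T$ and $\widehat{\mbf{n}}$ the corresponding normal on $\partial\widehat{T}$, then for a vector field $\widehat{\mbf{q}}$ one has $(\mathcal{P}(\widehat{\mbf{q}}) \cdot \mbf{n})\, ds = (\widehat{\mbf{q}}\cdot\widehat{\mbf{n}})\, d\widehat{s}$ on each edge; applied row-by-row to the tensor $\widehat{\tautens}$ this gives $(\tautens \cdot \mbf{n})\, ds = (\widehat{\tautens}\cdot\widehat{\mbf{n}})\, d\widehat{s}$ (with the slight abuse of notation in the lemma where $\mbf{n}$ on the reference side is written simply as $\mbf{n}$). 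Since $\mbf{v} = \widehat{\mbf{v}} \circ F^{-1}$ is just the ordinary pullback, $\mbf{v}(\mbf{x}) = \widehat{\mbf{v}}(\widehat{\mbf{x}})$, and the boundary weight $r$ becomes $\hat r$ under the change of variables. Combining these, $\int_{\partial T} (\tautens\cdot\mbf{n})\cdot\mbf{v}\, r\, ds = \int_{\partial\widehat{T}} (\widehat{\tautens}\cdot\widehat{\mbf{n}})\cdot\widehat{\mbf{v}}\, \hat r\, d\widehat{s}$, which is the asserted equality. I would either cite the normal-trace property from \cite{Ervin, Bentley} (both already referenced here) or include a one-line derivation of it from the surface-measure transformation formula $\mbf{n}\, ds = |J_{T}|\, J_{T}^{-t}\widehat{\mbf{n}}\, d\widehat{s}$ combined with the Piola definition.

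The main obstacle is bookkeeping rather than conceptual: getting the transposes, the placement of $J_{T}$ versus $J_{T}^{t}$, and the factors of $|J_{T}|$ all consistent with the precise form stated in the lemma, especially since the lemma writes $J_{T}\,\widehat{\tautens}^{t}$ rather than $J_{T}\,\widehat{\tautens}$. I would resolve this by writing the Frobenius inner product explicitly as a trace, performing the cyclic permutation carefully, and checking on the reference-element mappings (Type I, II, III) from Appendix \ref{sec:mappings} that the weight $r = \hat r(\xi,\eta)$ and the constant $|J_{T}| = h^{2} J_{D}$ behave as claimed. No deep estimate is needed; the lemma is purely a change-of-variables identity, so once the algebra is laid out cleanly the proof is complete.
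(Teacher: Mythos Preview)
Your proposal is correct and follows the standard route for establishing these change-of-variables identities under the contravariant Piola transform. The paper itself does not supply a proof of this lemma at all: immediately after the statement it simply writes ``Additional details and proofs can be found in \cite{Ervin, BBF}.'' So there is nothing in the paper to compare your argument against beyond the citation; your direct computation via $\operatorname{tr}(\tautens\sigtens^{t})$ and the normal-trace identity $\mbf{n}\,ds = |J_T|\,J_T^{-t}\widehat{\mbf{n}}\,d\widehat{s}$ is exactly what those references contain, and your attention to the transpose bookkeeping is appropriate given the form in which the lemma is stated.
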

Additional details and proofs can be found in \cite{Ervin, BBF}.

As a result of using polynomials as the discrete finite element approximation spaces, many of the integrals that appear in the 
finite element formulation have a similar structure.  The next lemma introduces an analytical solution for a common class of 
integrals that appear in the discrete finite element formulation of the axisymmetric linear elasticity problem.

To begin, for convenience of notation, if $r_{0}>0$, let
\begin{align*}
\dfrac{\hat{r}}{r_{0}} = \dfrac{(r_{1}-r_{0})}{r_{0}} \xi + \dfrac{(r_{2}-r_{0})}{r_{0}} \eta + 1 = r_{1}^{*} \xi + r_{2}^{*} \eta + 1
\end{align*}
while if $r_{0} = 0$, then
\begin{align*}
\hat{r} = r_{1} \xi + r_{2} \eta.
\end{align*}

Since we assume that the coordinates of $T$ are labeled such that $r_{0} \leq r_{1},r_{2}$, it follows that $r_{1}^{*}, r_{2}^{*} \geq 0$.  
Thus, if we are calculating the integral of a function $f(r, z)$ on $T$ using the reference element $\widehat{T}$, 
\begin{align}
\label{eq:integral_short_hand}
\int_{T} f(r,z) \; r \; dT  = \begin{cases} r_{0} \int_{\widehat{T}} \hat{f}(\xi, \eta) \; (r_{1}^{*} \xi + r_{2}^{*} \eta + 1) \; d \widehat{T} = r_{0} \; I (\hat{f}(\xi, \eta)) \text{ if } r_{0} > 0 \\[3mm]
\int_{\widehat{T}} \hat{f}(\xi, \eta) \; (r_{1} \xi + r_{2} \eta) \; d \widehat{T} = I(\hat{f}(\xi, \eta)) \text{ if } r_{0} =0.
\end{cases}
\end{align}
where $d \widehat{T} = |J_{T}| \; d\xi \; d\eta$.

\begin{lemma}
\label{lem:integral_eval}
For integers $s \geq 0$ and $t \geq 0$, 
\begin{align}
\int_{0}^{1} \int_{0}^{1-\xi} \xi^{s} \eta^{t} (r_{1} \xi + r_{2} \eta + 1) \; d \eta \; d \xi &= \dfrac{s! \; t!}{(s+t+2)!} 
\left[ \dfrac{r_{1} (s+1) }{(s+t+3)} + \dfrac{r_{2}(t+1)}{(s+t+3)} + 1 \right] \nonumber \\
&= \dfrac{s! \; t!} {(s+t+3)!} \left[ r_{1} (s+1) + r_{2}(t+1) + (s+t+3) \right]   \label{eq:r_1_case}
\end{align}
and
\begin{align}
\int_{0}^{1} \int_{0}^{1-\xi} \xi^{s} \eta^{t} (r_{1} \xi + r_{2} \eta) \; d \eta \; d \xi &= \dfrac{s! \; t!}{(s+t+2)!} 
\left[ \dfrac{r_{1} (s+1) }{(s+t+3)} + \dfrac{r_{2}(t+1)}{(s+t+3)} \right] \nonumber \\
&= \dfrac{s! \; t!} {(s+t+3)!} \left[ r_{1} (s+1) + r_{2}(t+1)\right].  \label{eq:r_0_case}
\end{align}
\end{lemma}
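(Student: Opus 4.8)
The statement to prove is Lemma~\ref{lem:integral_eval}, a pair of closed-form evaluations of the integrals $\int_{\widehat{T}} \xi^{s}\eta^{t}(r_{1}\xi+r_{2}\eta+1)\,d\widehat{T}$ and $\int_{\widehat{T}} \xi^{s}\eta^{t}(r_{1}\xi+r_{2}\eta)\,d\widehat{T}$ over the reference triangle. The plan is to reduce everything to the single classical identity for monomial integrals over the standard simplex, namely
\begin{align}
\label{eq:dirichlet_simplex}
\int_{0}^{1}\int_{0}^{1-\xi}\xi^{a}\eta^{b}\,d\eta\,d\xi \ = \ \frac{a!\,b!}{(a+b+2)!}\,,
\end{align}
which is the two-dimensional Dirichlet integral. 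I would either cite this as standard or give the one-line induction/Beta-function derivation: the inner integral in $\eta$ gives $\xi^{a}(1-\xi)^{b+1}/(b+1)$, and then $\int_{0}^{1}\xi^{a}(1-\xi)^{b+1}\,d\xi = B(a+1,b+2) = a!\,(b+1)!/(a+b+2)!$, so the quotient is $a!\,b!/(a+b+2)!$.

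Next I would expand the integrand by linearity. Writing $\xi^{s}\eta^{t}(r_{1}\xi+r_{2}\eta+1) = r_{1}\xi^{s+1}\eta^{t} + r_{2}\xi^{s}\eta^{t+1} + \xi^{s}\eta^{t}$ and applying \eqref{eq:dirichlet_simplex} to each of the three terms gives
\begin{align*}
\int_{\widehat{T}} \xi^{s}\eta^{t}(r_{1}\xi+r_{2}\eta+1)\,d\widehat{T}
&= r_{1}\frac{(s+1)!\,t!}{(s+t+3)!} + r_{2}\frac{s!\,(t+1)!}{(s+t+3)!} + \frac{s!\,t!}{(s+t+2)!} \, .
\end{align*}
The only remaining task is an algebraic tidy-up: factor out $s!\,t!/(s+t+2)!$ from all three terms, using $(s+1)! = (s+1)\,s!$, $(t+1)! = (t+1)\,t!$, and $(s+t+3)! = (s+t+3)(s+t+2)!$, to obtain the bracketed form $\frac{s!\,t!}{(s+t+2)!}\big[\frac{r_{1}(s+1)}{s+t+3}+\frac{r_{2}(t+1)}{s+t+3}+1\big]$, and then absorb the common denominator $s+t+3$ to reach $\frac{s!\,t!}{(s+t+3)!}\big[r_{1}(s+1)+r_{2}(t+1)+(s+t+3)\big]$, which is \eqref{eq:r_1_case}.

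For \eqref{eq:r_0_case} the argument is identical but with only the first two terms present (there is no constant $1$ in the weight), so the same factoring yields $\frac{s!\,t!}{(s+t+3)!}\big[r_{1}(s+1)+r_{2}(t+1)\big]$. There is essentially no obstacle here: the lemma is a routine computation, and the only point requiring any care is the bookkeeping of factorials when clearing denominators — in particular remembering that the non-weighted term has denominator $(s+t+2)!$ rather than $(s+t+3)!$, which is precisely why the bracket in the intermediate form has a $+1$ that becomes $+(s+t+3)$ after rescaling. I would present the Dirichlet identity, the three-term expansion, and the factoring, and state \eqref{eq:r_0_case} as the same computation with the third term omitted.
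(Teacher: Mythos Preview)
Your proposal is correct and follows essentially the same approach as the paper: the paper also establishes the Dirichlet simplex identity $\int_{0}^{1}\int_{0}^{1-\xi}\xi^{s}\eta^{t}\,d\eta\,d\xi = s!\,t!/(s+t+2)!$ via the Beta function, expands the weighted integrand by linearity into the same three monomial terms, applies the identity to each, and factors to obtain \eqref{eq:r_1_case}, with \eqref{eq:r_0_case} obtained by dropping the constant term.
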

\begin{proof}
First, for $\Gamma(\cdot)$ denoting the gamma function, note that
\begin{align*}
\int_{0}^{1} \int_{0}^{1 - \xi} \xi^{s} \eta^{t} \; d \eta \; d \xi = \int_{0}^{1} \dfrac{\xi^{s}(1 - \xi)^{t+1}}{t+1} \; d \xi = \dfrac{1}{t+1} 
\dfrac{\Gamma(s+1) \Gamma(t+2)}{\Gamma(s+t+3)} = \dfrac{s! \; t!}{(s+t+2)!}.
\end{align*}
Therefore
\begin{align*}
& \int_{0}^{1} \int_{0}^{1-\xi} \xi^{s} \eta^{t} (r_{1} \xi + r_{2} \eta + 1) \; d \eta \; d \xi \\
&= r_{1} \int_{0}^{1} \int_{0}^{1 - \xi} \xi^{s+1} \eta^{t} \; d \eta \; d \xi + r_{2} \int_{0}^{1} \int_{0}^{1 - \xi} \xi^{s} \eta^{t+1} \; d \eta \; d \xi 
+ \int_{0}^{1} \int_{0}^{1 - \xi} \xi^{s} \eta^{t} \; d \eta \; d \xi\\
& = r_{1} \dfrac{(s+1)! \; t! }{(s+ t+ 3)!} + r_{2} \dfrac{s! \; (t+1)!}{(s + t + 3)!} + \dfrac{s! \; t!}{(s+t+2)!} \\
&= \dfrac{s! \; t!}{(s+t+2)!} \left( r_{1} \dfrac{(s+1)}{(s+t+3)} + r_{2} \dfrac{(t+1)}{(s+t+3)} + 1 \right).
\end{align*}
which verifies \eqref{eq:r_1_case}.  Removing the $+1$ from $(r_{1} \xi + r_{2} \eta +1)$ yields \eqref{eq:r_0_case}. 
\end{proof}

Some useful integrals computed using Lemma \ref{lem:integral_eval} for $r_{0} > 0$ are given below
\begin{align}
\label{eq:bdm_integrals}
\begin{split}
&\int_{\widehat{T}} \eta \; \hat{r} \; d \widehat{T} = \dfrac{1}{4!} \left[ r^{*}_{1} + 2 r_{2}^{*} 
+ 4 \right] \quad  \int_{\widehat{T}} \xi \; \hat{r} \; d \widehat{T}  = \dfrac{1}{4!} \left[ 2r^{*}_{1} + r^{*}_{2} + 4 \right] \\
&\int_{\widehat{T}} \eta^{2} \; \hat{r} \; d \widehat{T} = \dfrac{2}{5!} \left[ r^{*}_{1} + 3 r^{*}_{2} + 5 \right] \quad 
\int_{\widehat{T}} \xi \eta \; \hat{r} \; d \widehat{T} = \dfrac{1}{5!} \left[2r_{1}^{*} + 2 r_{2} + 5 \right] \\
& \int_{\widehat{T}} \xi^{2} \; \hat{r} \; d \widehat{T} = \dfrac{2}{5!} \left[3 r^{*}_{1} 
+ r^{*}_{2} + 5 \right] \quad \int_{\widehat{T}} \eta^{3} \; \hat{r} \; d \widehat{T} = \dfrac{3!}{6!} [r^{*}_{1} + 4r^{*}_{2} + 6] \\
&\int_{\widehat{T}} \xi \eta^{2} \; \hat{r} \; d \widehat{T} = \dfrac{2}{6!} \left[ 2 r^{*}_{1} + 3 r^{*}_{2} 
+ 6 \right] \quad \int_{\widehat{T}} \xi^{2} \eta \; \hat{r} \; d \widehat{T} = \dfrac{2}{6!} \left[ 3 r^{*}_{1} + 2 r^{*}_{2} + 6 \right] \\
&\int_{\widehat{T}} \xi^{3} \; \hat{r} \; d \widehat{T} = \dfrac{3!}{6!} [4 r_{1} + r_{2} + 6].
\end{split}
\end{align}

%
 \setcounter{equation}{0}
\setcounter{figure}{0}
\setcounter{table}{0}
\setcounter{theorem}{0}
\setcounter{lemma}{0}
\setcounter{corollary}{0}
\setcounter{definition}{0}
\section{Modeling equations for axisymmetric linear elasticity}
\label{sec:axisymmetric_elasticity}
In this Appendix, we illustrate how using a change of variable from Cartesian to cylindrical coordinates, 
the axisymmetric linear elasticity problem can be expressed as the decoupled meridian and azimuthal problems.  
Recall that cylindrical coordinates form a triple $(r, \theta, z)$ where $r$  is the radial distance, $\theta$ is 
the azimuthal coordinate and $z$ is the vertical coordinate.  In this section, let $\brOmega$ denote a 
three dimensional axisymmetric domain, $\Omega$ represent an $(r,z)$ cross section of $\brOmega$ and 
$\Omega_{\theta}$ denotes the domain of the $\theta$ angle.

\subsection{Cylindrical Coordinate Operators and Function Spaces}
\label{sec:cylindrical_definitions}
First we define the differential forms and inner products that arise in cylindrical coordinates.  To begin, the 
cylindrical coordinate unit vectors are denoted $\mbf{e}_{r}, \mbf{e}_{\theta}$ and $\mbf{e}_{z}$.  Expressed in terms of 
Cartesian unit vectors,
\begin{align*}
\mbf{e}_{r} = \begin{pmatrix} \cos{\theta} \\ \sin{\theta} \\ 0 \end{pmatrix}, \quad \mbf{e}_{\theta} = \begin{pmatrix} -\sin{\theta} \\ \cos{\theta} \\ 0 \end{pmatrix} \quad \text{ and } \quad \mbf{e}_{z} = \begin{pmatrix} 0 \\ 0 \\ 1 \end{pmatrix}. 
\end{align*}
One can note from these equations that the cylindrical coordinate unit vectors vary in space.  Moreover, 
unless otherwise specified, we assume tensors and vectors are represented in terms of the cylindrical coordinates unit vectors.  
That is,
\begin{align*}
\begin{pmatrix} \phi_{1} \\ \phi_{2} \\ \phi_{3} \end{pmatrix} = \phi_{1} \mbf{e_{r}} + \phi_{2} \mbf{e_{\theta}} + \phi_{3} \mbf{e_{z}}
\end{align*}
and
\begin{align*}
\begin{split}
\begin{pmatrix} \phi_{rr} & \phi_{r \theta} & \phi_{rz} \\ \phi_{\theta r} & \phi_{\theta \theta} & \phi_{\theta z} \\ \phi_{zr} & \phi_{z \theta} & \phi_{zz} \end{pmatrix} = \phi_{rr} \mbf{e_{rr}} + \phi_{r \theta} \mbf{e_{r \theta}} &+ \phi_{zz} \mbf{e_{rz }} + 
\phi_{\theta r} \mbf{e_{\theta r}} + \phi_{\theta \theta} \mbf{e_{\theta \theta}} + \phi_{\theta z} \mbf{e}_{\theta z} \\
& + \phi_{z r} \mbf{e_{z r}} + \phi_{z \theta} \mbf{e_{z \theta}} + \phi_{z z} \mbf{e}_{z z}
\end{split}
\end{align*}
where $\mbf{e}_{ij} = \mbf{e}_{i} \otimes \mbf{e}_{j}$.

As a result of the spatially varying unit vectors, differential operators in cylindrical coordinates have a different 
algebraic form than in Cartesian coordinates.  These operators are not derived here, but details can be found in many sources 
including \cite{Quartapelle}.

We use two forms of notation for differential operators in cylindrical coordinates: $\nabla_{\text{cyl}}$ and $\nabla_{\text{axi}}$.  
The first denotes the complete cylindrical coordinate operator, while the second represents the cylindrical coordinate operator 
applied to an axisymmetric function (recall that $\dfrac{\partial u}{\partial \theta} = 0$ if $u$ is axisymmetric).

The cylindrical coordinate del operator is 
\begin{align*}
\nabla_{\text{cyl}} = \mbf{e}_{r} \dfrac{\partial }{\partial r} + \mbf{e}_{\theta} \dfrac{1}{r} \dfrac{\partial}{\partial \theta} + \mbf{e}_{z} \dfrac{\partial}{\partial z}.
\end{align*}

Applied to the scalar function $f$, this gives the gradient operators
\begin{align*}
\nabla_{\text{cyl}} f = \dfrac{\partial f}{\partial r} \mbf{e}_{r} + \dfrac{1}{r} \dfrac{\partial f}{\partial \theta} \mbf{e}_{\theta} + \dfrac{\partial f}{\partial z} \mbf{e}_{z} \quad \text{ and } \quad \nabla_{\text{axi}} f = \dfrac{\partial f}{\partial r} \mbf{e}_{r} + \dfrac{\partial f}{\partial z} \mbf{e}_{z}.
\end{align*}

For a vector function $\mbf{u} = (u_{r}, u_{\theta}, u_{z})^{t}$, the gradient tensor is 
\begin{align}
\label{eq:cylindrical_vector_gradient}
\nabla_{\text{cyl}} \mbf{u} = \begin{pmatrix} \dfrac{\partial u_{r}}{\partial r} & \dfrac{1}{r} \dfrac{\partial u_{r}}{\partial \theta} - \dfrac{u_{\theta}}{r} & \dfrac{\partial u_{r}}{\partial z} \\[6pt]
\dfrac{\partial u_{\theta}}{\partial r} & \dfrac{1}{r} \dfrac{\partial u_{\theta}}{\partial \theta} + \dfrac{u_{r}}{r} & \dfrac{\partial u_{\theta}}{\partial z} \\[6pt]
\dfrac{\partial u_{z}}{\partial r} & \dfrac{1}{r} \dfrac{\partial u_{z}}{\partial \theta} & \dfrac{\partial u_{z}}{\partial z} \end{pmatrix} \quad 
\text{ and }
\quad
\nabla_{\text{axi}} \mbf{u} = \begin{pmatrix} \dfrac{\partial u_{r}}{\partial r} &  \dfrac{- u_{\theta}}{r} & \dfrac{\partial u_{r}}{\partial z} \\[6pt]
\dfrac{\partial u_{\theta}}{\partial r} & \dfrac{u_{r}}{r} & \dfrac{\partial u_{\theta}}{\partial z} \\[6pt]
\dfrac{\partial u_{z}}{\partial r} & 0 & \dfrac{\partial u_{z}}{\partial z} \end{pmatrix}.
\end{align}

For a vector function $\mbf{u} = (u_{r}, u_{z})^{t}$, we also define the gradient operator $\nabla$ such that
\begin{align*}
\nabla \mbf{u} = \begin{pmatrix} \dfrac{\partial u_{r}}{\partial r} & \dfrac{\partial u_{r}}{\partial z} \\[2mm] \dfrac{\partial u_{z}}{\partial r} & \dfrac{\partial u_{z}}{\partial z} \end{pmatrix}.
\end{align*}

The divergence operator applied to $\mbf{u} = (u_{r},u_{\theta},u_{z})^{t}$ gives
\begin{align}
\label{eq:cylindrical_vector_divergence}
\nabla_{\text{cyl}} \cdot \mbf{u} = \dfrac{1}{r} \dfrac{\partial (r \; u_{r})}{\partial r} + \dfrac{1}{r} \dfrac{\partial u_{\theta}}{\partial \theta} + \dfrac{\partial u_{z}}{\partial z} \quad \text{ and } \quad \nabla_{\text{axi}} \cdot \mbf{u} = \dfrac{1}{r} \dfrac{\partial (r \; u_{r})}{\partial r} + \dfrac{\partial u_{z}}{\partial z}.
\end{align}

The divergence of an $\mathbb{M}^{3}$ tensor $\sigtens$ is,
\begin{align*}
\begin{split}
\nabla_{\text{cyl}} \cdot \sigtens &= \begin{pmatrix} \dfrac{\partial \sigma_{rr}}{\partial r} + \dfrac{1}{r} \dfrac{\partial \sigma_{r \theta}}{\partial \theta} + \dfrac{\partial \sigma_{rz}}{\partial z} + \dfrac{1}{r} (\sigma_{rr} - \sigma_{\theta \theta})  \\[2.5mm]
\dfrac{\partial \sigma_{\theta r}}{\partial r} + \dfrac{1}{r} \dfrac{\partial \sigma_{\theta \theta}}{\partial \theta} + \dfrac{\partial \sigma_{\theta z}}{\partial z} + \dfrac{1}{r} (\sigma_{\theta r} + \sigma_{r \theta})  \\[2.5mm]
\dfrac{\partial \sigma_{zr}}{\partial r} + \dfrac{1}{r} \dfrac{\partial \sigma_{z \theta}}{\partial \theta}  + \dfrac{\partial \sigma_{zz}}{\partial z} + \dfrac{1}{r} \sigma_{zr}
\end{pmatrix} \quad \text{ and } \\
\nabla_{\text{axi}} \cdot \sigtens &= \begin{pmatrix} \dfrac{\partial \sigma_{rr}}{\partial r} +  \dfrac{\partial \sigma_{rz}}{\partial z} + \dfrac{1}{r} (\sigma_{rr} - \sigma_{\theta \theta})  \\[2.5mm]
\dfrac{\partial \sigma_{\theta r}}{\partial r} + \dfrac{\partial \sigma_{\theta z}}{\partial z} + \dfrac{1}{r} (\sigma_{\theta r} + \sigma_{r \theta})  \\[2.5mm]
\dfrac{\partial \sigma_{zr}}{\partial r}  + \dfrac{\partial \sigma_{zz}}{\partial z} + \dfrac{1}{r} \sigma_{zr}
\end{pmatrix}.
\end{split}
\end{align*}

\subsection{Meridian and Azimuthal Subspaces}
\label{sec:meridian_azimuthal_space}
Next, we assume all functions are axisymmetric and define the meridian and azimuthal subspaces for tensor and vector functions.  
In addition, we specify the action of the differential operators introduced in Section \ref{sec:cylindrical_definitions} on the 
meridian and azimuthal subspaces.

The meridian and azimuthal subspaces of $_{\alpha}\Htens(\axidiv,\brOmega;\mathbb{M}^{3})$ are
\begin{align*}
_{\alpha}\Htens_{M}(\axidiv, \; \brOmega \; ; \mathbb{M}^{3}) = \left \{ \sigtens \in \; _{\alpha} \ub{H} (\axidiv, \brOmega ;  \mathbb{M}^{3}) \;  : 
\sigtens = \begin{pmatrix}
\sigma_{rr} & 0 & \sigma_{rz} \\
0 & \sigma_{\theta \theta} & 0 \\
\sigma_{zr} & 0 & \sigma_{zz} \\
\end{pmatrix}   \right \}, \\ 
_{\alpha}\Htens_{A}(\axidiv,\brOmega; \mathbb{M}^{3}) = \left \{  \sigtens \in 
\; _{\alpha} \ub{H} (\axidiv, \brOmega ;  \mathbb{M}^{3}) :
\sigtens = 
\begin{pmatrix}
0 & \sigma_{r \theta} & 0 \\
\sigma_{\theta r} & 0 & \sigma_{\theta z} \\
0 & \sigma_{z \theta} & 0 \\
\end{pmatrix}
 \right \}.
\end{align*}

Note that $_{\alpha}\Htens(\axidiv,\brOmega; \mathbb{M}^{3}) = {}_{\alpha}\Htens_{M}(\axidiv,\brOmega; \mathbb{M}^{3}) \oplus {}_{\alpha}\Htens_{A}(\axidiv,\brOmega; \mathbb{M}^{3})$.  
This decomposition extends to tensors in $_{\alpha}\Htens(\axidiv,\brOmega;\mathbb{K}^{3})$, 
\begin{align*}
_{\alpha}\Htens_{M}(\axidiv,\brOmega;\mathbb{K}^{3}) &= \left \{\sigtens \in {}_{\alpha} \ub{H} (\axidiv, \brOmega ;  \mathbb{K}^{3}) : \sigtens = \begin{pmatrix} 0 & 0 & \sigma_{rz} \\
0 & 0 & 0 \\
-\sigma_{rz} & 0 & 0 \end{pmatrix}  \right \},  \\
_{\alpha}\Htens_{A}(\axidiv,\brOmega;\mathbb{K}^{3}) &= \left \{\sigtens \in {}_{\alpha} \ub{H} (\axidiv, \brOmega ;  \mathbb{K}^{3}) : \sigtens = \begin{pmatrix} 0 & \sigma_{r \theta} & 0 \\
-\sigma_{r \theta} & 0 & \sigma_{\theta z} \\
0 & - \sigma_{\theta z} & 0 \end{pmatrix} \right \}. 
\end{align*}

For $\sigtens \in {}_{1}\Htens_{M} (\axidiv, \brOmega; \mathbb{M}^{3})$,
\begin{align}
\label{eq:Hm_div_axi}
\nabla_{\text{axi}} \cdot \sigtens &= \left(\dfrac{\partial \sigma_{rr}}{\partial r} + \frac{1}{r} (\sigma_{rr} - \sigma_{\theta \theta}) + \dfrac{\partial \sigma_{rz}}{\partial z} \right)\mbf{e}_{r} + \left(\dfrac{\partial \sigma_{zr}}{\partial r} + \frac{1}{r} \sigma_{zr} + \dfrac{\partial \sigma_{zz}}{\partial z}\right) \mbf{e}_{z} \, , 
\end{align}
and for $\sigtens\in {}_{1}\Htens_{A} (\axidiv, \brOmega; \mathbb{M}^{3})$ 
\begin{align*}
\begin{split}
\nabla_{\text{axi}} \cdot \sigtens = &  \left(\dfrac{\partial \sigma_{r \theta}}{\partial r}  + \dfrac{\partial \sigma_{z \theta}}{\partial z} + \frac{1}{r} (\sigma_{r \theta} + \sigma_{\theta r}) \right)\mbf{e}_{\theta}.
\end{split}
\end{align*}

The meridian and azimuthal subspaces for the displacement space $_{1} \mbf{L}^{2} (\brOmega)$ are
\begin{align*}
{}_{1}\mbf{L}^{2}_{M} (\brOmega) = \left \{ \mbf{u} : \begin{pmatrix} u_{r} \\ 0 \\ u_{z} \end{pmatrix} \in {}_{1} \mbf{L}^{2} (\brOmega) \right \}
\quad \text{    and    } \quad
_{1} \mbf{L}^{2}_{A} (\brOmega) = \left \{ \mbf{u} : \begin{pmatrix} 0 \\ u_{\theta} \\ 0 \end{pmatrix} \in {}_{1}\mbf{L}^{2} (\brOmega) \right \}.
\end{align*}

For $\mbf{u}_{M} \in {}_{1} \mbf{L}^{2}_{M}(\brOmega)$ and $\mbf{u}_{A} \in {}_{1} \mbf{L}^{2}_{A}(\brOmega)$, 
the cylindrical gradient operator (\ref{eq:cylindrical_vector_gradient}) has the form
\begin{align*}
\nabla_{\text{axi}} \mbf{u}_{M} = \begin{pmatrix} \dfrac{\partial u_{r} }{\partial r} & 0 & \dfrac{\partial u_{r}}{\partial z} \\ 0 & \dfrac{u_{r}}{r} & 0 \\ \dfrac{\partial u_{z}}{\partial r} & 0 & \dfrac{\partial u_{z}}{\partial z} \end{pmatrix} \quad \text{ and } \quad
\nabla_{\text{axi}} \mbf{u}_{A} = \begin{pmatrix} 0 & \dfrac{-u_{\theta}}{r} & 0 \\ \dfrac{\partial u_{\theta}}{\partial r} & 0 & \dfrac{\partial u_{\theta}}{\partial z} \\ 0 & 0 & 0 \end{pmatrix}.
\end{align*}

For $\mbf{u}_{M} \in {}_{1} \mbf{L}^{2}_{M}(\brOmega)$ and $\mbf{u}_{A} \in {}_{1} \mbf{L}^{2}_{A}(\brOmega)$, 
the divergence operator \eqref{eq:cylindrical_vector_divergence} has the form
\begin{align*}
\axidiv \mbf{u}_{M} = \dfrac{1}{r} \dfrac{\partial_{r} (r u_{r})}{\partial r} + \dfrac{\partial u_{z}}{\partial z}
\quad \text{ and } \quad
\nabla_{\text{axi}} \cdot \mbf{u}_{A} = 0.
\end{align*}

Because of axisymmetry, the $\theta$ variable does not appear in the meridian or azimuthal subspaces.  
Therefore, for functions $p, q \in {}_{1}L^{2}(\brOmega)$, we define the axisymmetric cylindrical coordinate inner product as
\begin{align*}
(p, q) = \dfrac{1}{2 \pi} \iint_{\Omega} \int_{\theta = 0}^{2 \pi} p \; q \; r \; d \theta \; d r \; d z & = \iint_{\Omega} p \; q \; r \; dr \; dz.
\end{align*}

When working with the meridian and azimuthal problems, it is helpful to use the following reduced dimensional representations 
of the meridian and azimuthal subspaces.  To begin, elements $\mbf{u} \in {}_{1} \mbf{L}^{2}_{M}(\brOmega; \mathbb{R}^{3})$, 
can be represented as $\mathbb{R}^{2}$ vectors
\begin{align*}
\begin{pmatrix}
u_{r} \\ 0 \\ u_{z}
\end{pmatrix}
\rightarrow
\begin{pmatrix}
u_{r} \\ u_{z}
\end{pmatrix} \in {}_{1} \mbf{L}^{2}(\Omega; \mathbb{R}^{2}).
\end{align*}

Elements of ${}_{1}\Htens_{M}(\axidiv, \brOmega, \mathbb{M}^{3})$ can be represented as an $\mathbb{M}^{2}$ tensor and 
a scalar function
\begin{align*}
\begin{pmatrix}
\sigma_{rr} & 0 & \sigma_{rz} \\
0 & \sigma_{\theta \theta} & 0 \\
\sigma_{zr} & 0 & \sigma_{zz} \\
\end{pmatrix} \rightarrow
\begin{pmatrix} \sigma_{rr} & \sigma_{rz} \\ \sigma_{zr} & \sigma_{zz} \end{pmatrix} \in {}_{1} \Ltens (\Omega, \mathbb{M}^{2}) \; \; \text{ and } \; \; 
\sigma_{\theta \theta} \in {}_{1} L^{2}(\Omega) \, , 
\end{align*}
where $\axidiv \left( \begin{pmatrix} \sigma_{rr} & \sigma_{rz} \\ \sigma_{zr} & \sigma_{zz} \end{pmatrix}, \; \sigma_{\theta \theta} \right) \in {}_{1}\mbf{L}^{2}(\Omega)$.

To specify that the reduced form notation is being used, elements $\mbf{u} \in {}_{1} \mbf{L}^{2}_{M}(\brOmega; \mathbb{R}^{3})$ 
are denoted $\mbf{u}_{M}$.  Further, the reduced form of $\sigtens \in {}_{1}\Htens_{M}(\nabla \cdot, \brOmega, \mathbb{M}^{3})$ 
is the pair $(\sigtens_{M}, \sigma_{\theta \theta})$ where $\sigtens_{M}$ is a tensor component and $\sigma_{\theta \theta}$ is a 
scalar component of $\sigtens$.  
Moreover, $\nabla_{\text{axi}} \cdot (\sigtens_{M}, \sigma_{\theta \theta}) = \nabla_{\text{axi}} \cdot \sigtens$ as defined in \eqref{eq:Hm_div_axi}.

Elements of $\mbf{u} \in {}_{1}\mbf{L}^{2}_{A}(\brOmega; \mathbb{R}^{3})$ can be identified with scalar functions
\begin{align*}
\begin{pmatrix}
0 \\ u_{\theta} \\ 0
\end{pmatrix}
\rightarrow
u_{\theta} \in {}_{1} L^{2}(\Omega)
\end{align*}
and elements of $_{1}\Htens_{A}(\axidiv, \brOmega; \mathbb{M}^{3})$, can written as a $\mathbb{M}^{2}$ tensors
\begin{align*}
\begin{pmatrix} 0 & \sigma_{r \theta} & 0 \\ \sigma_{\theta r} & 0 & \sigma_{\theta z} \\ 0 & \sigma_{z \theta} & 0 \end{pmatrix} \rightarrow
\begin{pmatrix} \sigma_{r \theta} &  \sigma_{\theta r} \\
\sigma_{\theta z} & \sigma_{z \theta} \end{pmatrix} \in {}_{1} \Htens (\axidiv, \Omega; \mathbb{M}^{2}).
\end{align*}

To indicate the reduced form is being used, for $\mbf{u} \in {}_{1}\mbf{L}^{2}_{A}(\brOmega; \mathbb{R}^{3})$, the reduced form 
will be expressed simply as the scalar function $u_{\theta}$.  Further, the reduced form 
of $\sigtens \in {}_{1}\Htens_{A}(\axidiv, \brOmega; \mathbb{M}^{3})$, is denoted $\sigtens_{A}$.

Norms in reduced form are inherited from the norms of the original space.  For example, taking $\sigtens \in  {}_{1}\Htens_{M}(\axidiv, \brOmega, \mathbb{M}^{3})$,
\begin{align*}
\begin{split}
\| \sigtens \|^{2}_{_{1} \Htens_{M}(\axidiv , \brOmega; \mathbb{M}^{3})} &= \| (\sigtens_{M},\sigma_{\theta \theta}) \|^{2}_{_{1} \Htens_{M}(\axidiv , \Omega)} \\
& = \| \nabla_{\text{axi}} \cdot (\sigtens_{M},\sigma_{\theta \theta}) \|^{2}_{_{1} \Ltens(\Omega)} + \| (\sigtens_{M},\sigma_{\theta \theta}) \|^{2}_{_{1} \mbf{L}^{2} (\Omega)}. 
\end{split}
\end{align*}

In the following, we take
\begin{align*}
\bSigma &= \{ (\sigtens, \sigma) \in {}_{1}\Ltens(\Omega, \mathbb{M}^{2}) \times {}_{1}L^{2}(\Omega) : \axidiv (\sigtens, \sigma) \in {}_{1}L^{2}(\Omega) \}, \\
\Sigma &= {}_{1}\ub{H} (\nabla _{\text{axi}} \cdot, \Omega, \mathbb{M}^{2}), \\  
U & = {}_{1} \mbf{L}^{2} (\Omega), \text{ and } Q  = {}_{1} {L}^{2} (\Omega).
\end{align*}

\subsection{Axisymmetric Weak Form}
At this point, we are ready to define the weak form of the meridian and azimuthal problems.  
First we note that the strong form of the axisymmetric linear elasticity problem \eqref{eq:elasticity_strong} is
\begin{align}
\label{eq:problem_one_strong_1}
\mathcal{A} \sigtens - \frac{1}{2}(\nabla_{\text{axi}}\mbf{u} + (\nabla_{\text{axi}}\mbf{u})^{t})  = \mbf{0} \text{ in } \brOmega \\
\label{eq:problem_one_strong_2}
\axidiv \sigtens = \mbf{f} \text{ in } \brOmega
\end{align}
where we assume the clamped boundary condition, $\mbf{u} \, = \, \mbf{0}$ on $\partial \brOmega$.  

An axisymmetric solution to (\ref{eq:problem_one_strong_1}) and (\ref{eq:problem_one_strong_2}) can be expressed in 
terms of the orthogonal subspaces $\Htens_{A}(\nabla_{\text{axi}} \cdot, \brOmega, \mathbb{M}^{3})$ 
and $\Htens_{M}(\nabla_{\text{axi}} \cdot, \brOmega, \mathbb{M}^{3})$, and $_{1} \mbf{L}^{2}_{A}(\brOmega; \mathbb{R}^{3})$ 
and $_{1} \mbf{L}^{2}_{M}(\brOmega; \mathbb{R}^{3})$.

\subsubsection{Meridian problem} 
The first step to derive the meridian problem is to multiply (\ref{eq:problem_one_strong_1}) with a 
test function $\tautens \in \Htens_{M}(\axidiv, \brOmega; \mathbb{M}^{3})$ and integrate.  
For $\sigtens \in \Htens_{M}(\axidiv , \brOmega; \mathbb{M}^{3})$, $\mathcal{A} \sigtens$ has the form 
(recall the operator $\mathcal{A}$ (\ref{eq:complianceTensor}) for $m = 3$)
\begin{align*}
\mathcal{A} \sigtens = \dfrac{1}{2 \mu} \begin{pmatrix} \sigma_{rr} - \dfrac{\lambda}{2 \mu + 3 \lambda} \text{tr}(\sigtens) & 0 & \sigma_{rz} \\ 0 & \sigma_{\theta \theta} - \dfrac{\lambda}{2 \mu + 3 \lambda} \text{tr}(\sigtens) & 0 \\ \sigma_{zr} & 0 & \sigma_{zz} - \dfrac{\lambda}{2 \mu + 3 \lambda} \text{tr}(\sigtens) \end{pmatrix}.
\end{align*}

Therefore, for $\tautens \in \Htens_{M}(\axidiv, \brOmega; \mathbb{M}^{3})$, 
\begin{align*}
\mathcal{A} \; \sigtens : \tautens &= \dfrac{1}{2 \mu} (\sigma_{rr} \tau_{rr} + \sigma_{\theta \theta} \tau_{\theta \theta} + \sigma_{zz} \tau_{zz} +  \sigma_{rz} \tau_{rz} + \sigma_{zr} \tau_{zr} - \dfrac{\lambda}{2 \mu + 3 \lambda} \text{tr} (\sigtens) \text{tr} (\tautens) ).
\end{align*}

Using reduced form notation,
\begin{align}
\label{eq:problem_one_derivation_A_reduced_form}
\begin{split}
\mathcal{A} \; \sigtens : \tautens & = \dfrac{1}{2 \mu}  ( \sigtens_{M} : \tautens_{M} - \dfrac{\lambda}{2 \mu + 3 \lambda}  ( \text{tr}( \sigtens_{M} ) + \sigma_{\theta \theta} ) \text{tr} (\tautens_{M})   \\
& \quad \quad \quad \quad + \sigma_{\theta \theta} \tau_{\theta \theta} - \dfrac{\lambda}{2 \mu + 3 \lambda}  ( \text{tr}(\sigtens_{M}) + \sigma_{\theta \theta} ) \tau_{\theta \theta} ) \\
& = \mathcal{A} \sigtens_{M} : \tautens_{M} + \mathcal{A} \sigma_{\theta \theta} \; \tau_{\theta \theta} - \dfrac{1}{2 \mu} \dfrac{\lambda}{2 \mu + 3 \lambda} ( \sigma_{\theta \theta} \; \text{tr} (\tautens_{M})) +  \text{tr} (\sigtens_{M}) \tau_{\theta \theta} ),
\end{split}
\end{align} 
where the operator $\mathcal{A}$ applied to the scalar function $\sigma_{\theta \theta}$ is given by
(\ref{eq:complianceTensor}) for $m = 1$.

Integrating (\ref{eq:problem_one_derivation_A_reduced_form}) over $\Omega$ gives the bilinear 
form $a_{M}(\cdot,\cdot) : \bSigma \times \bSigma \rightarrow \mathbb{R}$,
\begin{align*}
\begin{split}
a_{M}((\sigtens_{M},\sigma_{\theta \theta}),(\tautens_{M},\tau_{\theta \theta})) &= (\mathcal{A} \sigtens_{M}, \tautens_{M}) + (\mathcal{A} \sigma_{\theta \theta}, \tau_{\theta \theta}) \\ & \quad \quad - \frac{1}{2 \mu} \frac{\lambda}{2\mu + 3 \lambda} [ (\sigma_{\theta \theta}, \Tr{\tautens_{M}})   + (\Tr{\sigtens_{M}}, \tau_{\theta \theta})].
\end{split}
\end{align*}

Observe that for $\mbf{u} \in {}_{1}\mbf{L}^{2}_{M}(\Omega)$ and $(\tautens, \tau_{\theta \theta}) \in \bSigma$,
\begin{align}
\label{eq:append_a_ibp_p1}
\begin{split}
-\int_{\Omega} \nabla_{\text{axi}} \mbf{u} : \tautens \; r \; d \Omega &= - \int_{\Omega} \begin{pmatrix} \dfrac{\partial u_{r}}{\partial r} & \dfrac{\partial u_{r}}{\partial z} \\[2.5mm] \dfrac{\partial u_{z}}{\partial r} & \dfrac{\partial u_{z}}{\partial z} \end{pmatrix} : \begin{pmatrix} \tau_{rr} & \tau_{rz} \\ \tau_{zr} & \tau_{zz} \end{pmatrix} \; r \; d \Omega - \int_{\Omega} \dfrac{u_{r}}{r} \tau_{\theta \theta} \; r \; d \Omega \\
& = -\int_{\Omega} \nabla \mbf{u}_{M} : \tautens_{M} \; r \; d \Omega - \int_{\Omega} \dfrac{u_{r}}{r} \tau_{\theta \theta} \; r \; d \Omega.
\end{split}
\end{align}

Next, we apply integration by parts to the expression
\begin{align}
\label{eq:b_operator_ibp}
\begin{split}
- \int_{\Omega} \nabla \mbf{u}_{M} : \tautens_{M} \; r \; d\Omega &= - \int_{\partial \Omega} u_{r} \; (\tautens_{M})_{1} \cdot \mbf{n} \; r \; d \Omega  + \int_{\Omega} u_{r} \; \regdiv (r \; (\tautens_{M})_{1}) \; d \Omega \\
& - \int_{\partial \Omega} u_{z} (\tautens_{M})_{2} \cdot \mbf{n} \; r \; d \Omega + \int_{\Omega} u_{z} \; \regdiv (r (\tautens_{M})_{2}) d \Omega. 
\end{split}
\end{align} 

As we are integrating over the domain $\Omega$, the boundary $\partial \Omega$ is comprised of two parts.  
The first corresponds to the boundary of the entire three dimensional domain $\partial \Omega$ upon which clamped 
displacement condition $\mbf{u}_{M} = \mbf{0}$ is enforced.  The second part of the boundary 
$\Gamma_{0}$ corresponds to the symmetry axis along which $u_{r} = 0$, and we assume that
$\tautens_{M} \cdot \mbf{n} = \mbf{0}$.  Therefore, all of the boundary integrals in \eqref{eq:b_operator_ibp} vanish so that
\begin{align}
\label{eq:append_a_ibp_p2}
\begin{split}
- \int_{\Omega} \nabla \mbf{u}_{M} : \tautens_{M} \; r \; d\Omega &= \int_{\Omega} u_{r} \; \regdiv (r \; (\tautens_{M})_{1}) \; d \Omega  + \int_{\Omega} u_{z} \; \regdiv (r (\tautens_{M})_{2}) d \Omega \\
&= \int_{\Omega} \mbf{u}_{M} \cdot \axidiv (\tautens_{M}) \; r \; d \Omega. 
\end{split}
\end{align}

Thus from \eqref{eq:append_a_ibp_p1} and \eqref{eq:append_a_ibp_p2} from we define the bilinear form 
$b_{M}(\cdot,\cdot) : \bSigma \times U \rightarrow \mathbb{R}$ as
\begin{align*}
b_{M}((\tautens_{M}, \tau_{\theta \theta}), \mbf{u}_{M}) = (\mbf{u}_{M}, \nabla_{\text{axi}} \cdot \tautens_{M}) - (u_{r}, \dfrac{\tau_{\theta \theta}}{r}).
\end{align*}

For $(\sigtens, \sigma_{\theta \theta}) \in \bSigma$, multiplying the left hand side of (\ref{eq:problem_one_strong_2}) with a 
test function $\mbf{v} \in {}_{1}\mbf{L}^{2}_{M}(\Omega)$ gives
\begin{align*}
\begin{split}
(\nabla_{\text{axi}} \cdot \sigtens) \cdot \mbf{v} & = (\partial_{r} \sigma_{rr} + \dfrac{1}{r} (\sigma_{rr} - \sigma_{\theta \theta}) + \partial_{z} \sigma_{rz} ) v_{r}
+ (\partial_{r} \sigma_{z r}  + \dfrac{1}{r} \sigma_{zr} + \partial_{z} \sigma_{zz} ) v_{z} \\
& = (\nabla_{\text{axi}} \cdot \sigtens_{M}) \cdot \mbf{v}_{M} - \dfrac{1}{r} \sigma_{\theta \theta} v_{r}.
\end{split}
\end{align*}

From integrating this expression we define the bilinear form
\begin{align*}
b_{M}((\sigtens_{M},\sigma_{\theta \theta}),\mbf{v}_{M}) = ((\nabla_{\text{axi}} \cdot \sigtens_{M}), \mbf{v}_{M} ) - (v_{r}, \dfrac{\sigma_{\theta \theta}}{r}).
\end{align*}

Finally, multiplying the right hand side of (\ref{eq:problem_one_strong_2}) with a test function 
$\mbf{v} \in {}_{1}L^{2}_{M}(\Omega)$ and integrating, defines the linear functional $(\mbf{f}_{M}, \mbf{v}_{M})$.

The meridian problem can now be defined as: 
\textit{Given $\mbf{f}_{M} \in {}_{1}\mbf{L}^{2}_{M}(\Omega)$, find
$((\sigtens_{M}, \sigma_{\theta \theta}), \mbf{u}_{M}) \in \bSigma \times U$ such that for all 
$((\tautens_{M}, \tau_{\theta \theta}), \mbf{v}_{M}) \in \bSigma \times U$}
\begin{align*}
a_{M}((\sigtens_{M},\sigma_{\theta \theta}),(\tautens_{M},\tau_{\theta \theta})) + b_{M}((\tautens_{M},\tau_{\theta \theta}), \mbf{u}_{M} ) &= 0 \\
b_{M}((\sigtens_{M},\sigma_{\theta \theta}), \mbf{v}_{M} ) &= (\mbf{f}_{M},\mbf{v}_{M}) \, .
\end{align*}

For the weak symmetry constraint (recall \eqref{eq:weak_symmetry_elasticity_3}), we define the bilinear form $c_{M}(.,.): \bSigma \rightarrow \mathbb{R}$
\begin{align*}
c_{M}((\sigtens_{M},\sigma_{\theta \theta}), p ) &= (\rhotens_{M}, p).
\end{align*}

The meridian problem with weak symmetry is: 
\textit{Given $\mbf{f}_{M} \in {}_{1}\mbf{L}^{2}_{M}(\Omega)$, find
 $((\sigtens_{M}, \sigma_{\theta \theta}), \mbf{u}_{M}, p) \in \bSigma \times U \times Q$ such that
 for all $((\tautens_{M}, \tau_{\theta \theta}), \mbf{v}_{M}, q) \in \bSigma \times U \times Q$}
\begin{align*}
a_{M}((\sigtens_{M},\sigma_{\theta \theta}),(\tautens_{M},\tau_{\theta \theta})) + b_{M}((\tautens_{M},\tau_{\theta \theta}), \mbf{u}_{M} ) + c_{M}((\tautens_{M},\tau_{\theta \theta}), p ) &= 0 \\
b_{M}((\sigtens_{M},\sigma_{\theta \theta}), \mbf{v}_{M} ) &= (\mbf{f},\mbf{v}_{M}) \\
c_{M}((\sigtens_{M},\sigma_{\theta \theta}), q ) & = 0 \, .
\end{align*}

\subsubsection{Azimuthal Problem}
Finally we consider the azimuthal problem.  
Recall that for $\sigtens \in \Htens_{A}(\axidiv , \brOmega ; \mathbb{M}^{3})$, $\mathcal{A} \sigtens$ has the form
\begin{align*}
\mathcal{A} \sigtens = \dfrac{1}{2 \mu} \begin{pmatrix} 0 & \sigma_{r \theta} & 0 \\ \sigma_{\theta r} & 0 & \sigma_{\theta z} \\ 0 & \sigma_{z \theta} & 0 \end{pmatrix}.
\end{align*}

Thus, for all $\tautens \in \Htens_{A}(\axidiv , \brOmega; \mathbb{M}^{3})$, the first term of \eqref{eq:problem_one_strong_1} is  
\begin{align}
\label{eq:azimuthal_derivation_A}
\mathcal{A} \; \sigtens : \tautens &= \dfrac{1}{2 \mu} (\sigma_{r \theta} \tau_{r \theta} + \sigma_{\theta r} \tau_{\theta r} + \sigma_{\theta z} \tau_{\theta z} +  \sigma_{z \theta} \tau_{z \theta}),
\end{align}
and using reduced form notation, $ \mathcal{A} \sigtens : \tautens = \frac{1}{2 \mu} \sigtens_{A} : \tautens_{A}.$
Integrating (\ref{eq:azimuthal_derivation_A}) defines the bilinear form 
$a_{A}(\cdot,\cdot) : \Sigma \times \Sigma \rightarrow \mathbb{R}$,
\begin{align*}
a_{A}(\sigtens_{A},\tautens_{A}) &= \frac{1}{2 \mu} (\sigtens_{A}, \tautens_{A})_{A}.
\end{align*}

For the second term in \eqref{eq:problem_one_strong_1}, taking $\mbf{u} \in {}_{1} \mbf{L}^{2}_{A}(\brOmega)$ and $\tautens \in \Htens_{A}(\axidiv , \brOmega; \mathbb{M}^{3})$, 
\begin{align*}
\begin{split}
-\int_{\Omega} \nabla_{\text{axi}} \mbf{u} :  \tautens \; r\; d \Omega &= -\int_{\Omega} \begin{pmatrix} 0 & \dfrac{- u_{\theta}}{r} & 0 \\ \dfrac{\partial u_{\theta}}{\partial r} & 0 & \dfrac{\partial u_{\theta}}{\partial z} \\ 0 & 0 & 0 \end{pmatrix} \begin{pmatrix} 0 & \tau_{r \theta} & 0 \\ \tau_{\theta r} & 0 & \tau_{\theta z} \\ 0 & \tau_{z \theta} & 0 \end{pmatrix} \; r \; d \Omega\\ 
&= -\int_{\Omega} \nabla u_{\theta} \cdot \begin{pmatrix} \tau_{\theta r} \\ \tau_{\theta z} \end{pmatrix} \; r \; d \Omega + \int_{\Omega} \dfrac{\tau_{r \theta} \; u_{\theta}}{r} \; r \; d \Omega.
\end{split}
\end{align*}

Integrating the first term by parts
\begin{align*}
-\int_{\Omega} \nabla u_{\theta} \cdot \begin{pmatrix} \tau_{\theta r} \\ \tau_{\theta z} \end{pmatrix} \; r \; d \Omega = -\int_{\partial \Omega} u_{\theta} \begin{pmatrix} \tau_{\theta r} \\ \tau_{\theta z} \end{pmatrix} \cdot \mbf{n} \; r \; \partial \Omega + \int_{\Omega} u_{\theta} \; \nabla \cdot \left(r\; \begin{pmatrix} \tau_{\theta r} \\ \tau_{\theta z}\end{pmatrix} \right) \; d \Omega.
\end{align*}

Since $\tautens \cdot \mbf{n} = \mbf{0}$ on $\partial \Omega$, it follows that
\begin{align*}
\begin{split}
-\int_{\Omega} \nabla_{\text{axi}} \mbf{u} :  \tautens \; r\; d \Omega &= \int_{\Omega} \mbf{u} \cdot (\axidiv \tautens) \; r \; d \Omega.
\end{split}
\end{align*}

Using the reduced form notation,
\begin{align*}
\begin{split}
\int_{\Omega} \mbf{u} \cdot (\nabla_{\text{axi}} \cdot \tautens) \; r \; d \Omega &= \int_{\Omega} u_{\theta} \; (\partial_{r} \tau_{r \theta} + \partial_{z} \tau_{z \theta} + \frac{1}{r} (\tau_{r \theta} + \tau_{\theta r} ) ) \; r \; d \Omega \\
& = \int_{\Omega} u_{\theta} \; \nabla_{\text{axi}} \cdot \tautens_{A} \; r \; d \Omega.
\end{split}
\end{align*}

This defines the bilinear form $b_{A}(.,.) : Q \times \Sigma \rightarrow \mathbb{R}$ as
\begin{align*}
b_{A}(u_{\theta}, \tautens_{A}) = (u_{\theta}, \nabla_{\text{axi}} \cdot \tautens_{A}).
\end{align*}

For $\sigtens \in \Htens_{A}(\nabla \cdot , \brOmega, \mathbb{M}^{3})$, multiplying the left hand side 
of (\ref{eq:problem_one_strong_2}) with a test function $\mbf{v} \in _{1} \mbf{L}^{2}_{A}(\brOmega)$ and integrating 
over $\Omega$ gives
\begin{align*}
\begin{split}
\int_{\Omega} (\nabla_{\text{axi}} \cdot \sigtens) \cdot \mbf{v} \; r \; d \Omega &= \int_{\Omega} (\partial_{r} \sigma_{r \theta} + \partial_{z} \sigma_{z \theta} + \dfrac{1}{r} (\sigma_{r \theta} + \sigma_{\theta r} ) ) v_{\theta} \; r \; d \Omega \\
&= b_{A}(v_{\theta}, \sigtens_{A}).
\end{split}
\end{align*}

Therefore, the weak form of the azimuthal problem can be defined as: 
\textit{Given $f_{\theta} \in {}_{1}L^{2}(\Omega)$, 
find $(\sigtens_{A},u_{A}) \in \Sigma \times Q$ such that for all $(\tautens_{A},v) \in \Sigma \times Q$}
\begin{align*}
a_{A}(\sigtens_{A}, \tautens_{A}) + b_{A}( u ,  \tautens_{A} ) & = 0 \\
b_{A}( v, \sigtens_{A}) & = (f_{\theta}, v) \, .
\end{align*}



\end{document}